\documentclass[11pt]{article}

\usepackage[font=small,labelfont=bf]{caption}

\usepackage{enumitem}
\usepackage{graphicx}
\usepackage{xcolor}
\usepackage{blkarray}
\usepackage{amsmath,amssymb}
\usepackage{bbm}
\usepackage{mathtools}
\usepackage{pdflscape}
\usepackage{float}
\usepackage{rotating}
\usepackage{titlesec}
\usepackage{subcaption}
\usepackage{longtable}

\usepackage[most]{tcolorbox}
\tcbuselibrary{skins}

\usepackage{amsthm}

\newtheorem{theorem}{Theorem}[section]
\newtheorem{corollary}{Corollary}[theorem]

\newtheorem{lemma}{Lemma}[theorem]

\newtheorem{remark}{Remark}[section]
\newtheorem{example}{Example}[section]

\newenvironment{subproof}[1][\proofname]{%
  \begin{proof}[#1]%
}{%
  \end{proof}%
}

\theoremstyle{break}
\newtheorem{assumption}{Assumption}

\newtheorem{assumptionalt}{Assumption}
\newenvironment{assumptionp}[1]{
  \renewcommand\theassumptionalt{#1}
  \assumptionalt
}{\endassumptionalt}

\newtcolorbox[auto counter,list inside=examplebox]{examplebox}[2][]{%
    floatplacement=h, 
    float,
    title={Box~\thetcbcounter:~#2},
    #1,
    skin=enhanced,
    titlerule=0.5pt,
    arc=0pt,
    outer arc=0pt,
    sharp corners,
}

\usepackage[
citestyle=numeric,
style=numeric,
uniquename=false,
sorting=nyt,
maxbibnames=8
]{biblatex}

\allowdisplaybreaks

\titleformat{\paragraph}
{\normalfont\normalsize\bfseries}{\theparagraph}{1em}{}
\titlespacing{\paragraph}
{0pt}{3.25ex plus 1ex minus .2ex}{1.5ex plus .2ex}

\DeclareMathOperator{\EX}{\mathbbm{E}}

\begin{document}
\setlength{\abovedisplayskip}{4pt}
\setlength{\belowdisplayskip}{10pt}
\setlength{\abovedisplayshortskip}{4pt}
\setlength{\belowdisplayshortskip}{10pt}

\title{A multitype Markovian branching process with one-type population size dependence}

\author{Somya Mehra$^{1,2,3}$, Peter G Taylor$^{3}$}

\date{}

\maketitle

\noindent 
1: Nuffield Department of Medicine, University of Oxford, Oxford, United Kingdom \\
2: Mahidol Oxford Tropical Medicine Research Unit, Mahidol University, Bangkok, Thailand \\
3: School of Mathematics and Statistics, University of Melbourne, Parkville, Australia

\begin{abstract}
Motivated by a within-host framework of immunity-modulated parasitic disease, we formulate a multitype Markovian branching process with reproductive parameters dependent on the number of individuals of a single type only. We impose a soft carrying capacity $K$ with respect to the controlling type, serving as a threshold between subcritical and supercritical dynamics. We prove that extinction occurs almost surely when the offspring batch of each non-controlling type includes an individual of the controlling type with positive probability, with the time to extinction possessing finite moments of all orders. We then construct a sequence of density-dependent processes indexed by $K$ and study scaling limits of the process using the canonical functional law of large numbers (FLLN) and central limit theorem, with a view towards characterising the emergence of metastable behaviour in the vicinity of asymptotically stable equilibria, or stable limit cycles of the FLLN.
\end{abstract}

\section{Introduction}

We consider a state-dependent multitype Markovian branching process with a carrying capacity $K$ imposed on a \textit{single} type in the system: when the size of the subpopulation of this type exceeds (falls below) $K$, the reproductive parameters of the system would give rise to a subcritical (supercritical) Markovian branching process.

Our motivating example is a within-host model of a parasitic disease, where the controlling `type' can be interpreted as an abstracted immunity functional akin to that suggested in \parencite{mehra2024hybrid} which governs the dynamics of (potentially multi-stage) parasite clearance and replication. We can formulate this as a $d$-type branching process, where types $2, \dots, d$ represent different parasite stages; type $1$ represents immunity `increments' that are gained with parasite exposure but are eventually lost (to model the waning of immunity in the absence of exposure) without generating further offspring \parencite{mehra2024hybrid}; and the reproductive parameters depend exclusively on the immunity level, or equivalently, the type $1$ subpopulation size. In particular, we seek to characterise the conditions under which this model exhibits quasi-equilibrium behaviour, as in chronic Chagas disease which is characterised by an apparent equilibrium of \textit{Trypanosoma cruzi} parasites in the bloodstream \parencite{de2022human} that can persist for decades without symptoms. We introduce this abstracted immunity functional with a view towards constructing a unified framework of acute and chronic infection, allowing us to distinguish acute infection dynamics (when an immune-naive individual is inoculated with a small number of parasites) from post-treatment dynamics for chronic infection (when a small number of parasites survive drug treatment in a partially-immune individual). This framework is tailored to a pathogen for which immune modulation, rather than host cell depletion, serves as the key controlling mechanism. 

In the single-type case, size-dependent branching process models with a soft carrying capacity have been analysed extensively by \parencite{jagers2011population, hamza2016establishment, hognas2019lifetime, jagers2020populations} and others, with some authors additionally accommodating age dependence. Qualitative dynamics for these single-type models are well-established: extinction is certain (under mild conditions) \parencite{jagers2020populations} but if certain subcriticality conditions are satisfied and reproduction decreases monotonically with the population size, then given an initial condition of order $o(K)$, there is a non-negligible probability that the process reaches the vicinity of the carrying capacity $K$ in order $\mathcal{O}(\log(K))$ time, where it then lingers for an extended time scale of $\mathcal{O}(\exp(cK))$ \parencite{jagers2011population, hamza2016establishment}. In the multitype case, the authors of \parencite{jagers2016size, fan2020limit} construct a general type, age and population size dependent framework, for which scaling limits are derived; however, the `carrying capacity' is not assigned a specific interpretation, and is instead treated as a natural scaling parameter. Results on extinction and asymptotic behaviour are available for nearly (super/sub)-critical multitype branching processes for which reproductive parameters converge in the infinite population size limit \parencite{klebaner1989linear, klebaner1994asymptotic, gonzalez2005unlimited}, but these conditions are not directly satisfied by a model in which reproductive parameters are a function of the number of individuals of a single type only, as we consider here.

The present paper is structured as follows. In Section \ref{sec::model_structure}, we make precise our notion of a $d$-type Markovian branching process with a soft carrying capacity imposed on the type 1 subpopulation. We assume that the offspring batch of each individual of types $2, \dots, d$ includes a type 1 individual with positive probability, modelling the idea that infections of any type increase immunity. Under this assumption, we prove that the system undergoes extinction almost surely, and that the time to extinction possesses finite moments of all orders (Theorem \ref{theorem::extinction_almost_surely}, Section \ref{sec::extinction_almost_surely}).

In Section \ref{sec::scaling_limits}, we construct a sequence of density-dependent processes indexed by the carrying capacity $K$, and analyse the Gaussian approximation that emerges in the limit $K \to \infty$ using the functional law of large numbers (FLLN) and central limit theorem (CLT) of \textcite{ethier2009markov} (Theorems \ref{theorem::FLLN} and \ref{theorem::central_limit}, based on Theorems 11.2.1 and 11.2.3 of \parencite{ethier2009markov}). Under an irreducibility assumption, we show that the system of ODEs constituting the FLLN possesses a unique non-zero equilibrium (Theorem \ref{theorem::FLLN_equilibrium}). In the planar case $d=2$, we recover a characterisation of the limiting behaviour of this system of ODEs based on the classification of the non-zero equilibrium (Theorem \ref{theorem::planar_FLLN_limiting_behaviour}). Under a monotonicity condition in the case $d=2$, encompassing the special case of fixed death rates and monotonically decaying mean offspring batch sizes, we prove that the unique non-zero equilibrium is asymptotically stable with domain of attraction $\mathcal{D} \supseteq \mathbbm{R}_{\geq 0} \times \mathbbm{R}_{> 0}$ (Corollary \ref{corollary::FLLN_limiting_behaviour_simple}). However, we show by counter-example that in the case $d \geq 3$, this monotonicity condition is neither sufficient to ensure the asymptotic stability of the (unique) non-zero equilibrium, nor rule out periodic orbits (Example \ref{example::3D_limit_cycle}).

The possible emergence of metastable behaviour is addressed in Section \ref{sec::metastable_behaviour}. Provided the non-zero equilibrium of the FLLN is asymptotically stable and the scaled system has reached its basin of attraction, we use the work of \textcite{prodhomme2023strong} to show that, for every $\Delta: \mathbbm{R}_{> 0} \to \mathbbm{R}_{> 0}$ such that $K^{-\frac{1}{2}} \leq \Delta(K) \ll 1$, the Gaussian approximation derived under the FLLN and CLT retains asymptotic precision $\Delta(K)$ on sub-exponential time scales of order $\mathcal{O}(\exp(\beta K \Delta(K))$ for some constant $\beta > 0$ thereafter (Theorem \ref{theorem::moderate_deviations}, based on Corollary 1.2. of \parencite{prodhomme2023strong}). Alternatively, provided the FLLN possesses a stable limit cycle and the scaled system hits a sufficiently small neighbourhood of this cycle, we use the work of \textcite{bressloff2020phase} to show that the scaled process remains in a neighbourhood of order $\mathcal{O}(\Delta(K))$ of the cycle for a sub-exponential time scale of order $\mathcal{O}(\exp(\beta^* K \Delta^2(K))$ thereafter, although divergence in phase may occur on a much earlier time scale (Theorem \ref{theorem::limit_cycle_metastability}, based on \parencite{bressloff2020phase}). Ascent to the type 1 carrying capacity is characterised in Section \ref{sec::ascent_carrying_capacity}. When offspring batch sizes decrease monotonically with the type 1 subpopulation size in the sense of strong first order stochastic dominance \parencite{kopa2018strong} and the process is initialised with at least one individual of types $2, \dots, d$, we show that the type 1 subpopulation reaches the threshold $a K$, $0 < a < 1$ with non-negligible probability within $\mathcal{O}(\log(K))$ time (Theorem \ref{theorem::logarithmic_ascent}).

We return to our motivating example of immunity-modulated chronic parasitic infection in Section \ref{sec::example_parasite_model}. Focussing on the two-type case, we show that the qualitative dynamics of the system are dependent on how the rate of parasite clearance $g(x)$ is modulated as a function of the scaled immunity level $x$. When $g(x)/x$ is a strictly decreasing function of $x$ and parasite batch sizes decay monotonically as a function of immunity, we recover a concrete characterisation of quasi-equilibrium behaviour in direct analogy to the results of \parencite{jagers2011population}, which are tailored to the single-type case. However, we also characterise the possible emergence of self-sustained oscillations in the parasite burden, including subcritical Hopf bifurcations (Example \ref{example::2D_limit_cycle}). We conclude with some remarks in Section \ref{sec::conclusion}.

\section{Model structure} \label{sec::model_structure}

We consider a $d$-type branching process where the reproductive parameters, and consequently the criticality of the system, are dependent only on the size of the subpopulation of type $1$. This construction falls under the general framework of age, type and population size dependent branching processes for which limit theorems are derived in \parencite{jagers2016size, fan2020limit}. Here, we seek to establish results encompassing the qualitative behaviour of the system given a carrying capacity imposed on the type 1 subpopulation only; the `carrying capacity' in the multitype frameworks of \parencite{jagers2016size, fan2020limit} is treated as a natural scaling parameter, rather than an interpretable quantity governing the criticality of the system. Hereafter, we let $\lVert \mathbf{x} \rVert := \sum^d_{i=1} x_i$ and $\lVert \mathbf{x} \rVert_2 := \sqrt{\sum^d_{i=1} x_i^2}$. We denote by $\delta_{i,j}$ the Kronecker delta and $\mathbf{e}_i$ the $i^\text{th}$ unit vector in $(\mathbbm{R}_{\geq 0})^d$. We set $\mathbf{1} = (1, \dots, 1)$ and define $I_d$ to be the $d \times d$ identity matrix.

Formally, we construct a Markovian branching process $\{ \mathbf{x}(t), t \geq 0\}$ on the state space $(\mathbbm{Z}_{\geq 0})^d$ with support  $\mathcal{J} \subseteq (\mathbbm{Z}_{\geq 0})^d$ for offspring batch sizes, and transitions of the form
\begin{align}
    \mathbf{x} \to \mathbf{x} + \boldsymbol{\ell} - \mathbf{e}_i \text{ at rate } \gamma_i(x_1) \pi_{i, \boldsymbol{\ell}}(x_1) x_i \qquad i \in \{1, \dots, d \}, \, \boldsymbol{\ell} \in \mathcal{J}. \label{eq::transition_rates}
\end{align}

As a function of the type $1$ subpopulation size $x_1$, we thus interpret:
\begin{itemize}
    \item $\gamma_i(x_1)$ to be the rate of death for of an individual of type $i$, assumed to be bounded and strictly positive.
    \item $\pi_{(i, \boldsymbol{\ell})}(x_1)$ to be the probability that an individual of type $i$ will give rise to an offspring batch $\boldsymbol{\ell} \in \mathcal{J}$ upon death.
\end{itemize}

We further define:
\begin{itemize}
    \item $\mu_{i,j}(x_1) = \sum_{\boldsymbol{\ell} \in \mathcal{J}} \ell_j \pi_{(i, \boldsymbol{\ell})}(x_1)$ to be the mean number of type $j$ offspring produced upon the death of an individual of type $i$, assumed to be uniformly bounded.
    \item $\chi_{i,j_1, j_2}(x_1) = \sum_{\boldsymbol{\ell} \in \mathcal{J}} \ell_{j_1} \ell_{j_2} \pi_{(i, \boldsymbol{\ell})}(x_1)$ to be the entries of the second moment matrix for the offspring distribution of type $i$ individuals, likewise assumed to be uniformly bounded.
\end{itemize}

Set $M(x_1) = (\mu_{i,j}(x_1))_{1 \leq i,j \leq d}$ and $\Gamma(x_1) = \text{diag} \{ \gamma_1(x_1), \dots, \gamma_d(x_1)\}$.

By construction, this process is conservative and stable (Definition 4.3 of \parencite{kijima2013markov}). Uniform boundedness of the individual death rates $\gamma_i(x_1)$ and mean offspring batch sizes $\mu_{i,j}(x_1)$ ensure the existence of a constant $C>0$ such that, for all $\mathbf{x}$,
\begin{align}
    \sum^d_{i=1} \gamma_i(x_1) x_i \sum^d_{j=1} | \mu_{i,j}(x_1) - \delta_{i,j} | \leq C \lVert \mathbf{x} \rVert. \label{eq::regularity_condition}
\end{align}

By Theorem 2.1 of \textcite{meyn1993stability}, condition (\ref{eq::regularity_condition}) is sufficient to ensure both regularity, whereby the process remains finite almost surely on finite intervals, and integrability, with
\begin{align*}
    \EX[ \lVert \mathbf{x}(t) \rVert ] \leq e^{Ct} \lVert \mathbf{x}(0) \rVert \text{ for all } t.
\end{align*}

The reasoning of Theorem 1 of \textcite{hamza1995conditions} additionally implies that condition (\ref{eq::regularity_condition}) ensures non-explosivity, whereby the process almost surely undergoes finitely many jumps in finite time intervals. By Theorem 1.11 of \textcite{chen1991three}, condition (\ref{eq::regularity_condition}) also ensures that the transition rates (\ref{eq::transition_rates}) yield a unique $Q$-process.

Let $\lambda_\text{max}(x_1)$ be the spectral abscissa of the matrix $\big( \Gamma(x_1)( M(x_1) - I_d) \big)^T$. Then in the state $\mathbf{x} = (x_1, \dots, x_d)$, we say that the system is subcritical if $\lambda_\text{max}(x_1) < 0$, critical if $\lambda_\text{max}(x_1) = 0$ and supercritical if $\lambda_\text{max}(x_1) > 0$ \parencite{klebaner1991asymptotic}. Our central notion of a type 1 carrying capacity is defined below.

\begin{assumption} \label{assumption::type_1_carrying_capacity}
We assume that there is a number $K$ such that $\text{sign} (\lambda_\text{max}(x_1)) = \text{sign}(K - x_1)$; that is, the system is supercritical when $x_1 < K$, critical when $x_1=K$ and subcritical when $x_1 > K$. We call the number $K$ the carrying capacity for individuals of type $1$. We additionally assume that mean offspring batch sizes $\mu_{i,j}(x_1)$, second order offspring moments $\chi_{i,j_1, j_2}(x_1)$ and death rates $\gamma_i(x_1)$ are uniformly bounded (with $\inf_{x_1 \in \mathbbm{Z}_{\geq 0}, i \in \{1, \dots, d\}} \gamma_i(x_1) > 0$) and converge in the limits $x_1 \to 0$ and $x_1 \to \infty$ respectively.
\end{assumption}

To ensure that the type 1 subpopulation exerts adequate control on the branching process, so that the types $2, \dots, d$ subpopulations cannot grow indefinitely without accompanying growth of the type 1 subpopulation, we additionally enforce Assumption \ref{assumption::type_1_offspring} below.

\begin{assumption} \label{assumption::type_1_offspring}
    With probability at least $\varepsilon > 0$, the offspring batch associated with the death of an individual of type $2, \dots, d$ includes at least one type 1 individual, irrespective of the state of the system, that is, for all $x_1 \geq 0$ and $i \in \{2, \dots, d\}$, $\sum_{\boldsymbol{\ell} \in \mathcal{J}} \ell_1 \pi_{(i, \boldsymbol{\ell})}(x_1) \geq \varepsilon$.
\end{assumption}

For several results, we enforce the supplementary Assumption \ref{assumption::bounded_batch_size} that the offspring batch sizes have uniformly bounded support.

\begin{assumption} \label{assumption::bounded_batch_size}
There exists a constant $\Xi$ such that the support of the offspring batch size distribution for any individual of type $1, \dots, d$ lies in the finite set $[0, \Xi/d]^d$ for all $x_1 \geq 0$.
\end{assumption}

Due to the fact that the criticality of the process depends on the number of individuals of a single type only, the idea behind Assumptions \ref{assumption::type_1_carrying_capacity} and \ref{assumption::type_1_offspring} differs slightly from previous models in the literature. In the multitype case, \textcite{klebaner1994asymptotic} defines nearly (super/sub)-critical branching processes based on the limiting value of the reproductive parameters as $\lVert \mathbf{x} \rVert \to \infty$. Existing results pertaining to the extinction \parencite{gonzalez2005unlimited} and asymptotic behaviour \parencite{klebaner1994asymptotic} of multitype Markov population processes likewise assume parameter convergence in the limit $\lVert \mathbf{x} \rVert \to \infty$. Here, exclusive dependence on the type 1 subpopulation means that the reproductive parameters converge in the limit $x_1 \to \infty$ but not necessarily $\lVert \mathbf{x} \rVert \to \infty$. This means that the results of \parencite{klebaner1994asymptotic, gonzalez2005unlimited} do not generalise immediately to our setting. As related frameworks, we also mention a two-type age-dependent model of sexual reproduction formulated by \parencite{hamza2016establishment, fan2020limit} that considers a single reproducing type, but with reproductive parameters that can vary with the number of individuals of the non-reproducing type; and a two-type model of haemotopoesis formulated by \parencite{wang2025stochastic}, with reproductive parameters depending on the post-mitotic cell population only.

In the single-type case, \textcite{jagers2020populations} define a soft-carrying capacity as the threshold after which the expected step-change in the population size is strictly negative, and then prove (under relatively mild conditions) that extinction occurs almost surely. In the multitype case, however, the subcritical parameter regimes in place when the type 1 subpopulation exceeds the soft carrying capacity do not necessarily translate to a negative expected step change in the size of each subpopulation. The qualitative dynamics of a single type process with a soft carrying capacity are analysed in \textcite{jagers2011population} under a slightly stronger subcriticality condition, and the assumption that reproduction `decreases' monotonically with the population size: with non-zero probability, it is shown the single-type system approaches the vicinity of its carrying capacity $K$ in order $\mathcal{O}(\log(K))$ time, where it then lingers for an extended period of order $\mathcal{O}(\exp(cK))$. The proofs presented in \parencite{jagers2011population, jagers2020populations} generalise in part to the multitype case if the system is subcritical in a sufficiently strict sense except on a \textit{bounded} domain $\mathcal{D} \in (\mathbbm{Z}_{\geq 0})^d$; but the interaction between the respective carrying capacities for different subpopulations may be nuanced, and broad generalisations to the multitype case are not straightforward \parencite{jagers2016size}. In our case the system is supercritical on the unbounded domain $\mathcal{D}' = \mathbbm{Z}_{< K} \times (\mathbbm{Z}_{\geq 0})^{d-1}$, necessitating an alternative approach. This geometry also means that we are unable to apply classical Foster-Lyapunov type results to establish the existence of a quasi-stationary distribution \parencite{meyn1993stability, champagnat2023general}, that would otherwise apply if the system was supercritical on a bounded domain only.

\subsection{Preliminaries}

In the ensuing analysis, we are going to need the following constants:
\begin{align}
    \kappa & := \sup_{x_1 \in \mathbbm{Z}_{\geq 0}, i,j \in \{1, \dots, d \}} \gamma_i(x_1) \mu_{i,j}(x_1) \label{constant::xi} \\
    \rho & := \sup_{x_1 \in \mathbbm{Z}_{\geq 0}} \gamma_1(x_1) \label{constant::rho} \\
    \gamma_\text{min} & := \inf_{x_1 \in \mathbbm{Z}_{\geq 0}, i \in \{1, \dots, d\}} \gamma_i(x_1) \label{constant::gamma_min} \\
    \gamma_\text{max} & := \sup_{x_1 \in \mathbbm{Z}_{\geq 0}, i \in \{1, \dots, d\}} \gamma_i(x_1) \label{constant::gamma_max}.
\end{align}

Under Assumption \ref{assumption::type_1_carrying_capacity}, it is necessarily the case that $\gamma_\text{min}>0$.

We pay particular attention to the limiting matrix
\begin{align}
    H := \lim_{x_1 \to \infty} \Gamma(x_1) ( M(x_1) - I_d), \label{eq::limiting_H}
\end{align}
which is well-defined under Assumption \ref{assumption::type_1_carrying_capacity}. By construction, $-H$ is a non-singular $M$-matrix (with non-positive off-diagonal components and eigenvalues with strictly positive real parts). Then by \textcite{plemmons1976survey}, $-H$ is semipositive, and so there exists a strictly positive vector $\boldsymbol{\xi} > 0$ such that $H \boldsymbol{\xi} < 0$. Choose such a vector $\boldsymbol{\xi} > 1$ and then define:
\begin{align}
    h_\text{min} & := \min_{i \in \{1 , \dots, d\}} |H \boldsymbol{\xi}|_{i} \label{constant::g_min} \\
    \xi_\text{min} & := \min_{i \in \{1 , \dots, d\}} \xi_i \label{constant::mu_min} \\
    \xi_\text{max} & := \max_{i \in \{1 , \dots, d\}} \xi_i \label{constant::mu_max}.
\end{align}

Given element-wise convergence $\Gamma(x_1) (M(x_1) - I_d) \to H$ in the limit $x_1 \to \infty$, there exists a threshold $y$ such that, for all $ i,j \in \{1, \dots, d\}$ and $x_1 > y$, $|\gamma_{i}(x_1) (\mu_{i,j}(x_1) - \delta_{i,j}) - H_{i,j}| < \frac{h_\text{min}}{2 \xi_\text{max} d} $. This yields the element-wise inequality
\begin{align}
    \sup_{x_1 > y} \Gamma(x_1) (M(x_1) - I_d) \boldsymbol{\xi} < -\frac{h_\text{min}}{2} \mathbf{1} \label{eq::threshold_y}
\end{align}
which we will use on several occasions. 

\section{Extinction almost surely} \label{sec::extinction_almost_surely}

Given the state $\mathbf{0}$ is necessarily absorbing, while any non-zero state with finitely-many individuals of types $1, \dots, d$ is necessarily transient, it is straightforward to deduce that
\begin{align}
    \mathbbm{P} \Big( \lim_{t \to \infty} \lVert \mathbf{x}(t) \rVert = 0 \Big) +  \mathbbm{P} \Big( \lim_{t \to \infty} \lVert \mathbf{x}(t) \rVert = \infty \Big) = 1. \label{eq::extinction_explosion_dichotomy}
\end{align}

In Theorem \ref{theorem::extinction_almost_surely} below, we prove the stronger result that extinction occurs almost surely under Assumptions \ref{assumption::type_1_carrying_capacity} and \ref{assumption::type_1_offspring}, with the time to extinction possessing finite moments of all orders. Here, we draw on ideas from the proofs of Lemma 4.3 of \textcite{ferrari1995existence}, Theorem 3 of \textcite{gonzalez2005unlimited} and Lemma 6.1.4 of \textcite{anderson2012continuous}.

\begin{theorem} \label{theorem::extinction_almost_surely}
Under Assumptions \ref{assumption::type_1_carrying_capacity} and \ref{assumption::type_1_offspring}, $\lVert \mathbf{x}(t) \rVert \to 0$ almost surely as $t \to \infty$. Further, the time to extinction
\begin{align*}
    E_{\mathbf{s}} := \inf\{ t: \mathbf{x}(t) = \mathbf{0} \, | \, \mathbf{x}(0) = \mathbf{s} \}
\end{align*}
possesses finite moments of all orders, that is, 
\begin{align*}
    \EX[(E_\mathbf{s})^r ] < \infty \text{ for all } \mathbf{s} \in (\mathbbm{Z}_{\geq 0})^d \text{ and } r \in \mathbbm{N}.
\end{align*}
\end{theorem}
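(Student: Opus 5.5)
The plan is a Foster–Lyapunov argument. I will show that the process returns to a finite set $C$ with an exponentially integrable hitting time, and that from $C$ it reaches $\mathbf{0}$ with probability bounded below within a fixed time. A geometric-trials argument then gives both claims. The natural first candidate is $V_0(\mathbf{x}) = \boldsymbol{\xi}\cdot\mathbf{x}$. By (\ref{eq::threshold_y}), its generator drift satisfies $\mathcal{L}V_0(\mathbf{x}) = \sum_i x_i \gamma_i(x_1)\big((M(x_1)-I_d)\boldsymbol{\xi}\big)_i \le -\tfrac{h_\text{min}}{2}\lVert\mathbf{x}\rVert$ whenever $x_1>y$. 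The problem is the unbounded strip $\{x_1\le y\}$, where the types $2,\dots,d$ may be supercritical. On this strip I will use Assumption \ref{assumption::type_1_offspring}: type $2,\dots,d$ individuals generate type 1 offspring at total rate at least $\gamma_\text{min}\varepsilon S$, where $S:=x_2+\dots+x_d$, which pushes $x_1$ above $y$.

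Concretely, I set $V(\mathbf{x}) = \boldsymbol{\xi}\cdot\mathbf{x} + B\,(y+1-x_1)^+$ with $B$ to be chosen. The correction term is bounded by $B(y+1)$, so it does not interact badly with unbounded batch sizes. On $\{x_1\le y\}$, every event adding at least one type 1 individual lowers $(y+1-x_1)^+$ by at least $1$. Only type 1 deaths raise it, and these occur at rate at most $\rho y$. Hence on $\{x_1\le y\}$,
\begin{align*}
\mathcal{L}V(\mathbf{x}) \le \kappa d\,\xi_\text{max}(y+S) - B\gamma_\text{min}\varepsilon S + B\rho y ,
\end{align*}
and choosing $B$ large gives $\mathcal{L}V \le -\delta S + b$. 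On $\{x_1>y\}$, the correction term can only increase from the state $x_1=y+1$, at rate at most $\rho(y+1)$, so $\mathcal{L}V\le -\tfrac{h_\text{min}}{2}\lVert\mathbf{x}\rVert + B\rho(y+1)$. Since $V$ is comparable to $\lVert\mathbf{x}\rVert$ up to an additive constant, these two bounds combine to $\mathcal{L}V \le -c V + b'\mathbbm{1}_C$ for a finite set $C=\{\lVert\mathbf{x}\rVert\le R\}$ and some $c>0$. Applying Dynkin's formula to $e^{ct}V(\mathbf{x}(t))$, stopped at $\tau_C$ and localised using the integrability and non-explosivity established in Section \ref{sec::model_structure}, gives $\EX_{\mathbf{s}}[e^{c\tau_C}]\le V(\mathbf{s})/\min_{C^c}V$. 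In particular $\tau_C$ has an exponential moment that grows at most linearly in $V(\mathbf{s})$.

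Next I need a uniform positive probability of extinction from $C$. Because $C$ is finite and every non-zero state is transient with $\mathbf{0}$ accessible (as used for (\ref{eq::extinction_explosion_dichotomy})), there exist $T<\infty$ and $p>0$ with $\mathbbm{P}_{\mathbf{z}}(E\le T)\ge p$ for all $\mathbf{z}\in C$. I then define cycles: from a state in $C$, run for time $T$; if the process is not extinct, wait until the next return to $C$. By the strong Markov property, the number of cycles is dominated by a geometric variable with parameter $p$. Each cycle has length at most $T+\tau_C\circ\theta_T$, and the estimate $\EX[\lVert\mathbf{x}(T)\rVert]\le e^{CT}R$ from Section \ref{sec::model_structure} bounds $\EX[e^{c\tau_C}\circ\theta_T]$ uniformly over starting points in $C$. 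A geometric sum of cycles with uniformly bounded exponential moments, combined via Hölder's inequality (or by shrinking the exponent $c$), has an exponential moment for some small exponent. This gives $\EX[(E_\mathbf{s})^r]<\infty$ for every $r$, and in particular $E_\mathbf{s}<\infty$ almost surely, so $\lVert\mathbf{x}(t)\rVert\to0$.

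The main obstacle is the drift computation on the strip $\{x_1\le y\}$, together with the boundary layer $x_1\approx y+1$. The penalty term must be tuned so that type 1 production from types $2,\dots,d$ dominates their possible supercritical growth, while the constants from type 1 deaths remain bounded. A second point to check is accessibility of $\mathbf{0}$ from $C$. If the transience claim preceding (\ref{eq::extinction_explosion_dichotomy}) does not supply it directly, I would instead argue via the drift: from a state in $C$, the process reaches $\mathbf{0}$ with positive probability through a finite sequence of transitions of positive rate that keep $V$ from increasing. Verifying that such a sequence exists under only Assumptions \ref{assumption::type_1_carrying_capacity} and \ref{assumption::type_1_offspring} is the part I expect to need the most care.
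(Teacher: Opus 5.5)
Your argument is correct, up to one hypothesis that the paper also takes for granted (see below). It follows a genuinely different and stronger route than the paper.

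\textbf{Comparison with the paper.} The paper works only with the linear functional $\mathbf{x}\boldsymbol{\xi}$ and handles the strip $\{x_1<y\}$ by an excursion decomposition. This consists of:
\begin{itemize}
\item a supermartingale bound on the downcrossing time $T_y$, with the contraction $\EX[\mathbf{x}(T_y)\boldsymbol{\xi}]\le \frac{\rho y}{\rho y+q}\EX[\mathbf{x}(0)\boldsymbol{\xi}]$;
\item an $M/M/\infty$-queue comparison for the time to climb back above $y$ while $\sum_{i\ge 2}x_i>z$;
\item a geometric tail $\eta^n$ for the number of upcrossings;
\item Jensen and Markov inequalities to assemble polynomial moments of the hitting time of a finite set.
\end{itemize}
The final geometric-trials step from that set to $\mathbf{0}$ is essentially yours.

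Those bounds grow like $(r!)^2c^r$, so the paper obtains only moments of all orders. It leaves the exponential moment as a conjecture (Remark \ref{remark::exponential_moment}) and asserts that Foster--Lyapunov methods are blocked by supercriticality on an unbounded domain.

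Your penalty $B(y+1-x_1)^+$ removes exactly that obstruction. It is bounded, yet its drift on the strip is at most $B\rho y-B\gamma_{\text{min}}\varepsilon S$, because type 1 births occur at a rate proportional to $S$. This dominates the drift $\kappa d\xi_{\text{max}}(y+S)$ of $\boldsymbol{\xi}\cdot\mathbf{x}$, and your boundary-layer estimate at $x_1=y+1$ is also right. The resulting condition $\mathcal{L}V\le -cV+b\mathbbm{1}_C$ with $C$ finite gives $\EX_{\mathbf{s}}[e^{c\tau_C}]<\infty$. After your H\"older/geometric-trials step it gives $\EX[e^{\theta E_{\mathbf{s}}}]<\infty$ for some $\theta>0$.

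This implies the theorem and settles the paper's conjecture. Via that same Remark, it also gives a quasi-stationary distribution when the process is irreducible on non-zero states. Your argument is shorter and needs no queueing comparison. The paper's decomposition makes the crossing mechanism explicit but yields no additional conclusion.

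\textbf{Small repairs.}
\begin{itemize}
\item Take $y$ to be an integer, so that $\{x_1\le y\}$ and $\{x_1\ge y+1\}$ partition the lattice.
\item In $\EX_{\mathbf{s}}[e^{c\tau_C}]\le V(\mathbf{s})/\cdot$ the denominator must be $\inf V$, not $\min_{C^c}V$, because $\mathbf{x}(\tau_C)\in C$. This is harmless since $V\ge\min\{B,\xi_{\text{min}}(y+1)\}>0$ everywhere.
\item Your strip estimate uses $\mathbbm{P}(\ell_1\ge 1)\ge\varepsilon$, as does the paper's queue coupling, whereas the displayed form of Assumption \ref{assumption::type_1_offspring} bounds the mean. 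Cauchy--Schwarz with the bounded second moments gives $\mathbbm{P}(\ell_1\ge 1)\ge\varepsilon^2/\sup\chi_{i,1,1}$, which suffices.
\end{itemize}

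\textbf{Accessibility of $\mathbf{0}$.} Your fallback of deriving it from Assumptions \ref{assumption::type_1_carrying_capacity} and \ref{assumption::type_1_offspring} alone cannot work, because accessibility can fail. Take $d=2$ and $K>6$. Let type 1 individuals leave exactly two type 1 offspring when $x_1\le 5$ and none when $x_1\ge 6$. Let type 2 individuals always leave one type 1 offspring plus type 2 offspring with mean $m(x_1)$, where $\text{sign}(m(x_1)-1)=\text{sign}(K-x_1)$. Both assumptions hold, yet $\{(5,0),(6,0)\}$ is a closed class.

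So the transience of non-zero states asserted before (\ref{eq::extinction_explosion_dichotomy}) is an extra standing hypothesis for the paper as well. Granting it, your step closes without any path construction:
\begin{itemize}
\item The drift gives $\tau_C<\infty$ almost surely from every state, so the process lies in the finite set $C$ at arbitrarily large times.
\item Transience of the non-zero states of $C$ then forces absorption, so $\mathbbm{P}_{\mathbf{z}}(E<\infty)=1$ for every $\mathbf{z}\in C$.
\item Finiteness of $C$ then yields a uniform $T$ and $p$.
\end{itemize}
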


\begin{proof}

To prove that extinction occurs almost surely given (\ref{eq::extinction_explosion_dichotomy}), it suffices to show that 
\begin{align*}
    \mathbbm{P} \Big( \liminf_{t \to \infty} \lVert \mathbf{x}(t) \rVert = \infty \Big) = 0.
\end{align*}

To this end, we show that the process $\mathbf{x}$ initialised in any state almost surely hits a suitably-constructed finite domain $\mathcal{C} \supset \{ \mathbf{0} \}$. We construct this domain $\mathcal{C}$ as follows.

Applying the inequality (\ref{eq::threshold_y}), we can establish the existence of a threshold $y$ and an element-wise positive vector $\boldsymbol{\xi} >0$ such that, for all $x_1 > y$,
\begin{align}
    \sum^d_{i=1} \Gamma_i(x_1) x_i \sum^d_{j=1} \big[ \mu_{i,j}(x_1) - \delta_{i,j }] \xi_j < -\frac{h_\text{min}}{2 \xi_\text{max}}  \mathbf{x} \boldsymbol{\xi} \label{eq::threshold_y_inequality}
\end{align}
where the constant $h_\text{min} > 0$ is given by Equation (\ref{constant::g_min}) and $\xi_\text{max} > 0$ is the maximal component of $\boldsymbol{\xi}$ (Equation (\ref{constant::mu_max})). For notational shorthand, we set
\begin{align}
    q := \frac{h_\text{min}}{2 \xi_\text{max}}. \label{const::q}
\end{align}

Using Theorem 8.2 of \textcite{jagers2016size}, which is essentially an application of Dynkin's formula,
\begin{align}
    \mathbf{x}(t) \boldsymbol{\xi} =  \mathbf{x}(0) \boldsymbol{\xi} + \int^t_0  \sum^d_{i=1} \Gamma_i \big( x_1(s) \big) x_i(s) 
 \sum^d_{j=1}  \big[ \mu_{i,j} \big( x_1(s) \big) - \delta_{i,j }] \xi_j ds + R_t \label{eq::dynkin_formula}
\end{align}
where $R_t$ is a local martingale with mean zero and predictable quadratic variation. Observing that uniformly bounded local martingales are martingales, following \parencite{jagers2011population}, we take the localising sequence $L_k = \inf\{ \tau: \mathbf{x}(\tau) \boldsymbol{\xi} > k \}$ so $\EX[ R_{t \wedge L_k}] = 0$ for all $t \geq 0$ and $k \geq 0$. Since the process $\mathbf{x}$ is regular and non-explosive, $L_k \to \infty$ almost surely in the limit $k \to \infty$.

Equations (\ref{eq::threshold_y_inequality}) and (\ref{eq::dynkin_formula}) collectively imply that the expectation $\EX[\mathbf{x}(t) \boldsymbol{\xi}]$ decays in the subcritical regimes attained when the type 1 subpopulation $x_1(t) \ge y$; but may grow in the possibly supercritical regimes attained when $x_1(t) < y$. However, when the types $2, \dots, d$ subpopulations are large, Assumption \ref{assumption::type_1_offspring} yields a meaningful lower bound $\varepsilon \gamma_\text{min} \sum^d_{i=2} x_i$ (where the constant $\gamma_\text{min}$ is given by Equation (\ref{constant::gamma_min})) on the rate at which type 1 offspring are generated, and thus the duration of time that the process can spend in the possibly supercritical regimes with $x_1(t) < y$. 

We therefore construct the finite domain $\mathcal{C}$ to encompass states with $x_1(t) < y$ and $\sum^d_{i=2} x_i(t) < z$ for some sufficiently large threshold $z$. To prove that the process hits the domain $\mathcal{C}$ almost surely, we analyse the upcrossings and downcrossings of the process over the type 1 subpopulation threshold $x_1 = y$ prior to hitting the domain $\mathcal{C}$. Provided $z$ is sufficiently large, we show that the expected growth of $\mathbf{x}(t) \boldsymbol{\xi}$ in the possibly supercritical regimes with $x_1 < y$ and $\sum^d_{i=2} x_i > z$ is dominated by the expected decay in the subcritical regimes attained when $x_1 \geq y$. We also generate bounds on moments of all orders for the upcrossing and downcrossing times, which we then use to bound first passage times to the domain $\mathcal{C}$ and finally the time to extinction.

We structure the proof into a series of lemmas:
\begin{itemize}
    \item Lemma \ref{lemma::downcrossing} bounds the moment generating function (MGF) of the downcrossing time $T_y$ over the type 1 threshold $x_1 = y$, in addition to the expectation $\EX[\mathbf{x}(T_y) \boldsymbol{\xi}]$. 
    \item Lemma \ref{lemma::upcrossing} bounds the MGF for the time $D_y^z$ to exit the domain $\mathcal{D}^z_y = \{ \mathbf{x} \in (\mathbbm{Z}_{\geq 0})^d: \sum^d_{i=2} x_i>z, x_1(0) < y \}$, in addition to the expectation $\EX[\mathbf{x}(D^z_y) \boldsymbol{\xi}]$.
    \item Lemma \ref{lemma::first_passage_time} shows that the first passage time to a finite domain of the form $\mathcal{C} = \{ \mathbf{x} \in (\mathbbm{Z}_{\geq 0})^d: \sum^d_{i=2} x_i \leq z', x_1(0) < y \}$ possesses finite moments of all orders $r \in \mathbbm{N}$, whereby $\mathbbm{P}(\liminf_{t \to \infty} \mathbf{x}(t) \boldsymbol{\xi} = \infty) = 0$; coupled with the extinction-explosion dichotomy (\ref{eq::extinction_explosion_dichotomy}), this allows us to establish that extinction occurs almost surely.
    \item Lemma \ref{lemma::extinction_moments} bounds moments of all orders $r \in \mathbbm{N}$ for the time to extinction.
\end{itemize}


\begin{lemma}[Downcrossings over the type 1 subpopulation threshold $x_1=y$] \label{lemma::downcrossing}
Suppose we initialise $\mathbf{x}(0)$ such that $x_1(0) \geq y$. Define the downcrossing time
\begin{align*}
    T_y := \inf\{ t: x_1(t) < y\}.
\end{align*}
There exists a constant $\delta^* > 0$ such that
\begin{align*}
     \EX \big[ e^{\delta T_y} \big] \leq \frac{\mathbbm{E}[ \mathbf{x}(0) \boldsymbol{\xi}]}{\xi_\text{min} (y-1)} \text{ for all } \delta < \delta^*.
\end{align*}
where the constant $\xi_\text{min} >0$ is given by Equation (\ref{constant::mu_min}). Further,
\begin{align*}
    \EX \big[ \mathbf{x}(T_y) \boldsymbol{\xi} \big] \leq \frac{\rho y}{\rho y + q} \EX \big[ \mathbf{x}(0) \boldsymbol{\xi} \big]
\end{align*}
where the constants $\rho > 0$ and $q > 0$ are given by Equations (\ref{constant::rho}) and (\ref{const::q}) respectively. 
\end{lemma}

\begin{subproof}
 
To bound the MGF of $T_y$, we draw on ideas from the proof of Lemma 4.3 of \textcite{ferrari1995existence}. Let $\sigma_k$ denote the time of the $k^\text{th}$ jump, with $\sigma_0 = 0$ and 
\begin{align*}
    \nu := \inf\{k: \sigma_k \geq T_y \}.
\end{align*}
For $1 \leq k \leq \nu$, the inequality (\ref{eq::threshold_y}) implies that
\begin{align}
    \EX \Bigg[ \frac{\mathbf{x} \big( \sigma_{k} \big) \boldsymbol{\xi}}{\mathbf{x} \big( \sigma_{k-1} \big) \boldsymbol{\xi}} \, \Bigg| \, \mathbf{x} \big( \sigma_{k-1} \big) \boldsymbol{\xi} \Bigg] \leq 1 - \frac{h_\text{min}}{2 \gamma_\text{max}} \frac{1}{\mathbf{x} \big( \sigma_{k-1} \big) \boldsymbol{\xi}} \label{eq::eigenvalue_bound}
\end{align}
where the constant $\gamma_\text{max}$ is defined in Equation (\ref{constant::gamma_max}). 

For $1 \leq k \leq \nu$, we additionally observe that the random variable
\begin{align*}
    W_{k-1} \sim \text{Exponential}( \mathbf{x}( \sigma_{k-1}) \boldsymbol{\xi} \cdot \gamma_\text{min}/\xi_\text{max})
\end{align*}
exhibits first order stochastic dominance over the inter-jump intervals $(\sigma_{k} - \sigma_{k-1})$, because the instantaneous death rates for each individual are uniformly below by $\gamma_\text{min}$. Set
\begin{align*}
    \delta^* = \frac{h_\text{min} \gamma_\text{min}}{2 \gamma_\text{max} \xi_\text{max}} \wedge \frac{\gamma_\text{min} \xi_\text{min} y}{\xi_\text{max}}.
\end{align*}
Then for any $\delta < \delta^*$ and $1 \leq k \leq \nu$, Equation (\ref{eq::eigenvalue_bound}) implies that
\begin{align}
    \EX \Bigg[ e^{\delta (\sigma_{k} - \sigma_{k-1})}  \frac{\mathbf{x} \big( \sigma_{k} \big) \boldsymbol{\xi}}{\mathbf{x} \big( \sigma_{k-1} \big) \boldsymbol{\xi}} \, \Bigg| \, \mathbf{x}(\sigma_{k-1}) \Bigg] & \leq  \EX \big[ e^{\delta W_{k-1}} \big] \EX \Bigg[ \frac{\mathbf{x} \big( \sigma_{k} \big) \boldsymbol{\xi}}{\mathbf{x} \big( \sigma_{k-1} \big) \boldsymbol{\xi}} \, \Bigg| \, \mathbf{x}(\sigma_{k-1}) \Bigg] < 1. \label{eq::intermediate_bound_downcrossing_time}
\end{align}

Therefore, for any $\delta < \delta^*$ the process
\begin{align*}
    z(n) := e^{\delta \sigma_{n \wedge \nu} } \frac{\mathbf{x} \big( \sigma_{n \wedge \nu} \big) \boldsymbol{\xi}}{\mathbf{x} (0) \boldsymbol{\xi}},  n \geq 0
\end{align*}
constitutes a non-negative super-martingale with respect to the filtration $\mathcal{F}_n = \sigma( \mathbf{x} (0), \dots, \mathbf{x} (\sigma_{n \wedge \nu}))$. By the martingale convergence theorem (Corollary 11.5 of \parencite{klenke2008probability}), it then follows that
\begin{align*}
    \EX \big[ e^{\delta T_y} \mathbf{x}(T_y) \boldsymbol{\xi} \big] \leq \mathbbm{E}[ \mathbf{x}(0) \boldsymbol{\xi}] \text{ for all } \delta < \delta^*.
\end{align*}

Since each transition involves the death of at most one individual, $\mathbf{x}(T_y) \boldsymbol{\xi} \geq \xi_\text{min}(y-1)$ almost surely, yielding the bound
\begin{align*}
     \EX \big[ e^{\delta T_y} \big] \leq \frac{\mathbbm{E}[ \mathbf{x}(0) \boldsymbol{\xi}]}{\xi_\text{min} (y-1)} \text{ for all } \delta < \delta^*.
\end{align*}

Next, we generate a tighter upper bound on $\EX [ \mathbf{x}(T_y) \boldsymbol{\xi} ]$. Using Equations (\ref{eq::threshold_y_inequality}) and (\ref{eq::dynkin_formula}), and the optional stopping theorem (Theorem 3.2 of \parencite{revuz2013continuous}), we obtain
\begin{align*}
    \EX \big[ \mathbf{x}(t \wedge T_y \wedge L_k ) \boldsymbol{\xi} \big] &= \EX \big[ \mathbf{x}(0) \boldsymbol{\xi} \big] + \EX \bigg[ \int^{t \wedge T_y \wedge L_k}_0   \sum^d_{i=1} \Gamma_i \big( x_1(s) \big) x_i(s) \sum^d_{j=1}  \big[ \mu_{i,j} \big( x_1(s) \big) - \delta_{i,j }] \xi_j ds \bigg] \\
    & \leq \EX \big[ \mathbf{x}(0) \boldsymbol{\xi} \big] - q \EX \Big[ \int^{t \wedge T_y \wedge L_k}_0  \mathbf{x}(s \wedge T_y \wedge L_k) \boldsymbol{\xi} ds \Big].
\end{align*}

We then apply Fubini's theorem and a variation of Grownwall's inequality \parencite{pham2007variation} to conclude
\begin{align}
    \EX \big[ \mathbf{x}(t \wedge T_y \wedge L_k ) \boldsymbol{\xi} \big] \leq \EX \big[ \mathbf{x}(0) \boldsymbol{\xi} e^{-q (t \wedge T_y \wedge L_k)} \big]. \label{eq::gronwall_bound_1}
\end{align}

An application of Ville's maximal inequality (Exercise 1.15 of \parencite{revuz2013continuous}) to the supermartingale $\{ \mathbf{x}(\sigma_{n \wedge \nu}): n \in \mathbbm{Z}_{\geq 0} \}$ yields
\begin{align}
    \mathbbm{P} \Big( \sup_{0 \leq t \leq T_y} \mathbf{x}(t) \boldsymbol{\xi} \geq k \Big) \leq \frac{\EX[\mathbf{x}(0) \boldsymbol{\xi}]}{k} \implies \mathbbm{P}( T_y \geq L_k) \leq \frac{\EX[\mathbf{x}(0) \boldsymbol{\xi}]}{k}. \label{eq::ville_inequality_bound}
\end{align}

Using Fatou's lemma and the bounds (\ref{eq::gronwall_bound_1}) and (\ref{eq::ville_inequality_bound}), it then follows that
\begin{align}
    \EX \big[  \mathbf{x}(T_y \wedge t) \boldsymbol{\xi} \big]  &= \EX \big[ \liminf_{k \to \infty} \mathbf{x}(t \wedge T_y \wedge L_k ) \boldsymbol{\xi} \big] \notag \\
    & \leq \liminf_{k \to \infty} \EX \big[ \mathbf{x}(t \wedge T_y \wedge L_k ) \boldsymbol{\xi} \big] \notag \\
    & \leq \liminf_{k \to \infty} \Big\{ \EX \big[ \mathbf{x}(0) \boldsymbol{\xi} e^{-q (t \wedge T_y)} \, \big| \, T_y < L_k \big] \mathbbm{P}(T_y < L_k) \notag \\
    & \qquad \qquad \qquad + \EX \big[ \mathbf{x}(0) \boldsymbol{\xi} e^{-q (t \wedge L_k)} \, \big| \, T_y \geq L_k \big] \mathbbm{P}(T_y \geq L_k) \Big\} \notag \\
    & \leq  \EX \big[ \mathbf{x}(0) \boldsymbol{\xi} e^{-q (t \wedge T_y)} \big] + \EX \big[ \mathbf{x}(0) \boldsymbol{\xi} \big]  \lim_{k \to \infty} \mathbbm{P}(T_y \geq L_k) \notag \\
    & = \EX \big[ \mathbf{x}(0) \boldsymbol{\xi} e^{-q (t \wedge T_y)} \big] \label{eq::downcrossing_intermediate_bound}.
\end{align}

Now, suppose we `mark' $y$ of the initial $x_1(0)$ type 1 individuals in the process. In the interval $[0, T_y]$, at least one marked type 1 individual dies almost surely. Therefore, $T_y$ exhibits first order stochastic dominance over the time until one marked type 1 individual dies, which in turn exhibits first order stochastic dominance over the random variable $V_y \sim \text{Exponential}(\rho y)$, where the constant $\rho$ (Equation (\ref{constant::rho})) is an upper bound for the death rate of type 1 individuals. Then using Equation (\ref{eq::downcrossing_intermediate_bound}), Fatou's lemma and the dominated convergence theorem, it thus follows that
\begin{align*}
     \EX \big[ \mathbf{x}(T_y) \boldsymbol{\xi} \, \big| \, x_1(0) \geq y \big] & = \EX \Big[ \liminf_{t \to \infty} \mathbf{x}(T_y \wedge t) \boldsymbol{\xi} \, \big| \, x_1(0) \geq y \Big] \notag \\
     & \leq \liminf_{t \to \infty} \EX \big[  \mathbf{x}(T_y \wedge t) \boldsymbol{\xi} \, \big| \, x_1(0) \geq y \big] \notag \\
     & \leq \liminf_{t \to \infty} \EX \big[  \mathbf{x}(0) \boldsymbol{\xi} e^{-q (T_y \wedge t)} \, \big| \, x_1(0) \geq y \big] \notag \\
     & \leq \liminf_{t \to \infty} \EX \big[  \mathbf{x}(0) \boldsymbol{\xi} e^{-q (V_y \wedge t)} \big] \notag \\
    & = \frac{\rho y}{\rho y + q} \EX \big[ \mathbf{x}(0) \boldsymbol{\xi} \big],
\end{align*}
as claimed.
\end{subproof}


\begin{lemma}[Upcrossings over the type 1 subpopulation threshold $x_1 = y$ while $\sum^d_{i=2} x_i > z$] \label{lemma::upcrossing}
Consider the domain
\begin{align*}
    \mathcal{D}_y^z := \Big\{ \mathbf{x} \in (\mathbbm{Z}_{\geq 0})^d: \sum^d_{i=2} x_i > z \text{ and } x_1(0) < y \Big\}.    
\end{align*}
Suppose we initialise $\mathbf{x}(0) \in \mathcal{D}_y^z$. Define the stopping time 
\begin{align*}
    D_y^z := \inf\{ t:  \mathbf{x}(t) \notin \mathcal{D}_y^z \}
\end{align*}
and the function
\begin{align*}
    f^z_y(v) = \sum^y_{r=0} {y \choose r} \bigg(-\frac{v}{\rho} \bigg)^{(r)} \bigg(\frac{\rho}{\gamma_\text{min} \varepsilon z}\bigg)^r 
\end{align*}
where $(\cdot)^{(\cdot)}$ denotes the rising factorial and the constants $\rho$ and $\gamma_\text{min}$ are given by Equations (\ref{constant::rho}) and (\ref{constant::gamma_min}) respectively. There exists a threshold $z^* > 0$ such that for all $z \geq z^*$
\begin{align*}
    \EX \big[ e^{\kappa \xi_\text{max} D^{z}_y} \big] &\leq \big( f_y^{z}(\kappa \xi_\text{max}) \big)^{-1}\\
    \EX \big[ \mathbf{x}(D_y^{z}) \boldsymbol{\xi} \big] & \leq \EX \big[ \mathbf{x}(0) \boldsymbol{\xi} \big] \big( f_y^{z}(\kappa \xi_\text{max}) \big)^{-1}
\end{align*}
where the constants $\kappa$ and $\xi_\text{max}$ are given by Equations (\ref{constant::xi}) and (\ref{constant::mu_max}) respectively.
\end{lemma}

\begin{subproof}
    Following \parencite{jagers2011population}, we use Equation (\ref{eq::dynkin_formula}) and the optional stopping theorem (Theorem 3.2 of \parencite{revuz2013continuous}) to obtain the bound
\begin{align*}
    \EX \big[ \mathbf{x}(t \wedge D_y^z \wedge L_k) \boldsymbol{\xi} \big] &= \EX \big[ \mathbf{x}(0) \boldsymbol{\xi} \big] + \EX \bigg[ \int^{t \wedge D_y^z \wedge L_k}_0   \sum^d_{i=1} \Gamma_i \big( x_1(s) \big) x_i(s) \sum^d_{j=1}  \big[ \mu_{i,j} \big( x_1(s) \big) - \delta_{i,j }] \xi_j ds \bigg] \\
    &\leq \EX \big[ \mathbf{x}(0) \boldsymbol{\xi} \big] + \kappa \xi_\text{max} \cdot \mathbbm{E} \Bigg[ \int^{t \wedge D_y^z \wedge L_k}_0 \mathbf{x}(s \wedge D_y^z \wedge L_k) \boldsymbol{\xi} ds \Bigg].
\end{align*}

By Fubini's theorem and Gronwall's inequality, it then follows that
\begin{align*}
    \EX \big[ \mathbf{x}(t \wedge D_y^z \wedge L_k) \boldsymbol{\xi} \big] & \leq \EX \big[ \mathbf{x}(0) \boldsymbol{\xi} e^{\kappa \xi_\text{max} (D_y^z \wedge L_k \wedge t)} \big] \leq \EX \big[ \mathbf{x}(0) \boldsymbol{\xi} e^{\kappa \xi_\text{max} (D_y^z \wedge t)} \big],
\end{align*}
whereby the dominated convergence theorem yields, 
\begin{align}
    \EX \big[ \mathbf{x}(D_y^z \wedge t) \boldsymbol{\xi} \big] = \EX \Big[ \lim_{k \to \infty} \mathbf{x}(t \wedge D_y^z \wedge L_k) \boldsymbol{\xi}  \Big] \leq \EX\big[\mathbf{x}(0) \boldsymbol{\xi} e^{\kappa \xi_\text{max} (D_y^{z} \wedge t)} \big]. \label{eq::upcrossing_time_bound}
\end{align}

To generate an upper bound for $\EX[ \mathbf{x}(D_y^z \wedge t) \boldsymbol{\xi}]$, it then suffices to bound the MGF of the stopping time $D^z_y$.

Under Assumption \ref{assumption::type_1_offspring}, in the interval $[0, D_y^z]$, individuals of types $2, \dots, d$ generate offspring batches including one or more type 1 offspring at rate $\geq \gamma_\text{min} \varepsilon z$, while the lifetime of each type 1 individual exhibits first order stochastic dominance over the random variable $\text{Exponential}(\rho)$. Therefore, we can construct a coupling of the size of the type 1 subpopulation $x_1(t)$, and the number of busy servers $n_z(t)$ in an $M/M/\infty$ queue, with $n_z(0) = 0$, service rate $\rho$ and homogeneous arrival rate $\gamma_\text{min} \varepsilon z$, such that $n_z(t) \leq x_1(t)$ for all $t \in [0, D^z_y]$. Let
\begin{align*}
    Q_y^z = \inf \{ t: n_z(t) = y \}
\end{align*}
be the first passage time from zero to $y$ busy servers in this $M/M/\infty$ queue. Then $Q^z_y$ exhibits first order stochastic dominance over $D_y^z$.

Using Equation (2.12) of \textcite{morrison1987queueing} and Equation (167) of \parencite{morrison1980analysis}, the MGF for $Q^z_y$ takes the form
\begin{align}
    \EX \big[ e^{v Q^z_y} \big] = \big( f^z_y(v) \big)^{-1}. \label{eq::Q_MGF}
\end{align}
We observe that Equation (\ref{eq::Q_MGF}) is well-defined in the domain $v \in (-\infty, \theta^z_y)$, where $\theta^z_y$ is the smallest (necessarily positive) real root of the polynomial $f^z_y(v)$ \parencite{morrison1980analysis, morrison1987queueing}. From the bound 
\begin{align*}
    f^z_y(v) & = 1 - \frac{v}{\gamma_\text{min} \varepsilon z} \sum^{y-1}_{r=0} {y \choose r + 1} \Big(1 -\frac{v}{\rho} \Big)^{(r)} \Big(\frac{\rho}{\gamma_\text{min} \varepsilon z} \Big)^r \\
    & \geq 1 -  \frac{2v}{\gamma_\text{min} \varepsilon z} \sum^{y-1}_{r=0} {y-1 \choose r} \Big( y + \frac{v}{\rho} \Big)^r \Big( \frac{\rho}{\gamma_\text{min} \varepsilon z} \Big)^r \\
    &= 1 - \frac{2v}{\gamma_\text{min} \varepsilon z} \Big( 1 + \frac{\rho}{\gamma_\text{min} \varepsilon z} \Big[ y + \frac{v}{\rho} \Big] \Big)^{y-1},
\end{align*}
for any $L$, there exists a threshold $z^*$ such that $f^{z}_y(v) > 0$ for all $v \in [0, 2 L]$ and $z \geq z^*$. Taking $L=\kappa \xi_\text{max}$, we conclude that $\theta^{z}_y > \kappa \xi_\text{max}$. Recall that $Q^{z}_y$ exhibits first order stochastic dominance over $D^{z}_y$ and is independent of the initial condition $\mathbf{x}(0)$. Then for all $z \geq z^*$, we obtain the well-defined bound
\begin{align}
    \EX \big[ e^{\kappa \xi_\text{max} (D^{z}_y \wedge t)} \big] \leq \EX \big[ e^{\kappa \xi_\text{max} Q^{z}_y} \big] = \big( f_y^{z}(\kappa \xi_\text{max}) \big)^{-1}. \label{eq::upcrossing_bound_intermediate}
\end{align}

Using Equations (\ref{eq::upcrossing_time_bound}) and (\ref{eq::upcrossing_bound_intermediate}), and applying the dominated convergence theorem, it follows that for all $z \geq z^*$,
\begin{align*}
     \EX \big[ \mathbf{x}(D_y^{z}) \boldsymbol{\xi} \big] &=  \EX \big[ \lim_{t \to \infty}  \mathbf{x}(D_y^{z} \wedge t) \boldsymbol{\xi} \big] = \lim_{t \to \infty} \EX \big[  \mathbf{x}(D_y^{z} \wedge t) \boldsymbol{\xi} \big] \leq \EX \big[ \mathbf{x}(0) \boldsymbol{\xi} \big] \big( f_y^{z}(\kappa \xi_\text{max}) \big)^{-1},
\end{align*}
as claimed.
\end{subproof}


\begin{lemma}[First passage time for a finite domain $\mathcal{C} \supset \{ \mathbf{0} \}$] \label{lemma::first_passage_time}
Consider the domain
\begin{align*}
    \mathcal{C}^{z}_y := \Big\{ \mathbf{x} \in (\mathbbm{R}_{\geq 0})^d: x_1 < y \text{ and } \sum^d_{i=2} x_i \leq z \Big\}.
\end{align*}
Suppose we initialise $\mathbf{x}(0) \in (\mathbbm{Z}_{\geq 0})^d \setminus \mathcal{C}^{z}_y$. Define the stopping time
\begin{align*}
    Z^{z}_y := \inf \{t: \mathbf{x}(t) \in \mathcal{C}^{z}_y \}.
\end{align*}
Then for sufficiently large  $z' > 0$, for each $r \in \mathbbm{N}$, there exist constants $b_r^{(1)}, b_r^{(2)} < \infty$ such that
\begin{align*}
    \EX \big[ (Z^{z'}_y)^r  \big] \leq b_r^{(1)} \mathbf{x}(0) \boldsymbol{\xi} + b_r^{(2)} \big( \mathbf{x}(0) \boldsymbol{\xi} \big)^2. 
\end{align*}
\end{lemma}

\begin{subproof}
Choose $z' \geq z^*$ to be sufficiently large to ensure that
\begin{align*}
    \eta: = \big( f_y^{z'}(\kappa \xi_\text{max}) \big)^{-1} \cdot \frac{\rho y}{\rho y + q} < 1.
\end{align*}
(this is possible because $f_y^{z}(\kappa \xi_\text{max}) \to 1$ in the limit $z \to \infty$).

Drawing inspiration from the proof of Theorem 3 of \textcite{gonzalez2005unlimited}, we construct a modified process $\mathbf{x}^*(t)$ that is immediately absorbed when the branching process $\mathbf{x}(t)$, initialised with $x_1(0) \geq y$, enters the finite domain $\mathcal{C}^{z'}_y$, that is,
\begin{align*}
    \mathbf{x}^*(t) = \begin{cases}
    \mathbf{x}(t) & \text{ if } t < Z^{z'}_y \\
    0 & \text{ if } t \geq Z^{z'}_y. 
    \end{cases}
\end{align*}

We consider a sequence of stopping times on the interval $[0, Z^{z'}_y)$ based on the upcrossings and downcrossings of the type 1 subpopulation in the modified process $\mathbf{x}^*(t)$ over the threshold $y$. Setting $D_y^{(0)} = 0$, on the interval $[0, Z^{z'}_y)$, we recursively define
\begin{align*}
    T_y^{(\ell)} & := \inf \Big\{ t > D_y^{(\ell-1)}: x^*_1(t) < y \Big\} \\
    D_y^{(\ell)} & := \inf \Big\{ t > T_y^{(\ell)}: x^*_1(t) \geq y \Big\}.
\end{align*}

We also define the respective number of downcrossings and upcrossings of the type 1 subpopulation threshold $x_1=y$
\begin{align*}
    N_\text{down} & := \sup \{ k: T_y^{(k)} < Z^{z'}_y\} \\
    N_\text{up} & := \sup \{ k: D_y^{(k)} < Z^{z'}_y\}.
\end{align*}

For all $\ell > N_\text{down}$, we set $T_y^{(\ell)} = Z^{z'}_y$; likewise, for all $\ell > N_\text{up}$, we set $D_y^{(\ell)} = Z^{z'}_y$. 

Using Lemmas \ref{lemma::downcrossing} and \ref{lemma::upcrossing},
\begin{align*}
    \EX \big[ \mathbf{x}^* \big( D_y^{(n)} \big) \boldsymbol{\xi}\big] & \leq \big( f_y^{z'}(\kappa \xi_\text{max}) \big)^{-1} \EX \big[ \mathbf{x}^* \big( T_y^{(n)} \big) \boldsymbol{\xi} \big] \\
    & \leq \big( f_y^{z'}(\kappa \xi_\text{max}) \big)^{-1} \cdot \frac{\rho y}{\rho y + q} \EX \big[ \mathbf{x}^* \big( D_y^{(n-1)} \big) \boldsymbol{\xi} \big] = \eta \EX \big[ \mathbf{x}^* \big( D_y^{(n-1)} \big) \boldsymbol{\xi} \big].
\end{align*}

By induction, for all $n \geq 0$, it follows that
\begin{align}
    \EX \big[ \mathbf{x}^* \big( D_y^{(n)} \big) \boldsymbol{\xi}\big] \leq \eta^n  \mathbf{x} (0) \boldsymbol{\xi}. \label{eq::geometric_bound_hitting}
\end{align}

We now bound the moments of the stopping time $Z^{z'}_y$. We begin by observing that Equation (\ref{eq::geometric_bound_hitting}) yields a geometric bound for the number of upcrossings of the type 1 subpopulation threshold $x_1 = y$, until the process hits the finite domain $\mathcal{C}^{z'}_y \supset \{ 0 \}$. This is because
\begin{align*}
    \mathbf{x}^* \big( D_y^{(n)} \big) \boldsymbol{\xi} \leq z' \xi_\text{min} \implies \mathbf{x}^*\big( D_y^{(n)} \big) \in \mathcal{C}^{z'}_y \implies D_y^{(n)} = Z^{z'}_y,
\end{align*}
whereby Markov's inequality yields
\begin{align}
    \mathbbm{P}(N_\text{up} \geq n) = \mathbbm{P} \big( D_y^{(n)} < Z^{z'}_y \big) \leq \mathbbm{P} \Big( \mathbf{x}^* \big( D_y^{(n)} \big) \boldsymbol{\xi} > z' \xi_\text{min}  \Big) \leq \frac{\EX[\mathbf{x}^* \big( D_y^{(n)} \big) \boldsymbol{\xi}]}{z' \xi_\text{min} } \leq \eta^n \frac{ \mathbf{x} (0) \boldsymbol{\xi} }{z' \xi_\text{min} }. \label{eq::geometric_upcrossings}
\end{align}

We then bound each upcrossing and downcrossing interval. Set $\alpha^* = \kappa \xi_\text{max} \wedge \frac{\delta^*}{2}$. For all $\ell > 1$, the random variable $Q^{z'}_y$, with MGF (\ref{eq::Q_MGF}) well-defined for parameters $\alpha \leq \kappa \xi_\text{max}$, exhibits first order stochastic dominance over the upcrossing intervals $(D_y^{(\ell)} - T_y^{(\ell)})$. We thus conclude that
\begin{align*}
    \EX \big[ e^{\alpha (D_y^{(\ell)} - T_y^{(\ell)})} \big] \leq (f^{z'}_y( \alpha))^{-1} < \infty \text{ for all } \alpha \leq \kappa \xi_\text{max},
\end{align*}
yielding the moment bound
\begin{align}
    \EX \big[ (D_y^{(\ell)} - T_y^{(\ell)})^r \big] \leq \frac{r!}{ (\alpha^*)^r f^{z'}_y( \alpha^*)} \text{ for all } r \in \mathbbm{N}. \label{eq::upcrossing_interval_MGF_final}
\end{align}

Additionally, for $\ell \leq N_\text{up}$ (that is, prior to hitting the finite domain $\mathcal{C}^{z'}_y$), we can bound MGFs for the downcrossing intervals $(T_y^{(\ell+1)} - D_y^{(\ell)})$ using Lemma \ref{lemma::downcrossing}:
\begin{align*}
     \EX \big[ e^{\alpha (T_y^{(\ell+1)} - D_y^{(\ell)})} \, \big| \, \ell \leq N_\text{up} \big] \leq \frac{\mathbbm{E}[ \mathbf{x}(D_y^{(\ell)}) \boldsymbol{\xi} \, | \, \ell \leq \ell_2]}{\xi_\text{min} (y-1)} \leq \frac{\eta^\ell  \mathbf{x}(0) \boldsymbol{\xi}}{\xi_\text{min} (y-1)} \text{ for all } \alpha < \delta^*,
\end{align*}
similarly yielding the moment bound
\begin{align}
     \EX \big[  (T_y^{(\ell+1)} - D_y^{(\ell)})^r \, \big| \, \ell \leq N_\text{up} \big] \leq \frac{r!}{(\alpha^*)^r} \frac{\eta^\ell \mathbf{x}(0) \boldsymbol{\xi}}{\xi_\text{min} (y-1)} \label{eq::downcrossing_interval_MGF_final} \text{ for all } r \in \mathbbm{N}.
\end{align}

Using Equations (\ref{eq::upcrossing_interval_MGF_final}) and (\ref{eq::downcrossing_interval_MGF_final}), and Jensen's inequality, we can then bound the $r^\text{th}$ moment of the hitting time $Z^{z'}_y$ conditioned on the event $\{ N_\text{up} = n \}$ (that is, $n$ upcrossings of the type 1 subpopulation threshold $x_1=y$ prior to hitting the finite domain $\mathcal{C}^{z'}_y$):
\begin{align}
    \EX \big[ (Z^{z'}_y)^r \, | \, N_\text{up} = n \big] = & \EX \bigg[ \bigg( \sum^n_{i=1} \big[ D_y^{(i)} - T_y^{(i)} \big] + \sum^n_{i=1} \big[ T_y^{(i)} - D_y^{(i-1)} \big] \bigg)^r \, \bigg| \, N_\text{up} = n  \bigg] \notag \\
    \leq & (2n)^{r-1} \sum^n_{i=1} \Big\{  \EX \big[ ( D_y^{(i)} - T_y^{(i)} )^r \big] + \EX \big[ ( T_y^{(i)} - D_y^{(i-1)} )^r \, \big| \, i \leq N_\text{up} \big] \Big\} \notag \\
    \leq &  n^r \cdot \frac{2^{r-1} r!}{(\alpha^*)^r} \bigg\{ \frac{1}{f^{z'}_y( \alpha^*)} + \frac{\mathbf{x}(0) \boldsymbol{\xi}}{\xi_\text{min} (y-1)} \frac{1}{1-\eta} \bigg\}. \label{eq::intermediate_moment_bound}
\end{align}

Applying the law of total expectation, and using both Equation (\ref{eq::intermediate_moment_bound}) and the geometric bound (\ref{eq::geometric_upcrossings}) on the number of upcrossings $N_\text{up}$, it then follows that
\begin{align}
    \EX \big[ (Z^{z'}_y)^r  \big] = & \sum^\infty_{n=1} \EX \big[ (Z^{z'}_y)^r \, | \, N_\text{up} = n \big] \mathbbm{P} (N_\text{up} = n) \notag \\
    \leq & \frac{2^{r-1} r!}{(\alpha^*)^r} \bigg\{ \frac{1}{f^{z'}_y( \alpha^*)} + \frac{\mathbf{x}(0) \boldsymbol{\xi}}{\xi_\text{min} (y-1)} \frac{1}{1-\eta} \bigg\} \EX \big[ (N_\text{up})^r \big] \notag \\
    \leq & \frac{2^{r-1} r!}{(\alpha^*)^r} \bigg\{ \frac{1}{f^{z'}_y( \alpha^*)} + \frac{\mathbf{x}(0) \boldsymbol{\xi}}{\xi_\text{min} (y-1)} \frac{1}{1-\eta} \bigg\} \sum^\infty_{n=0} r n^{r-1} \mathbbm{P}(N_\text{up} \geq n) \notag \\
    \leq & \frac{r 2^{r-1} r!}{(\alpha^*)^r} \frac{\mathbf{x}(0) \boldsymbol{\xi}}{z' \xi_\text{min}} \bigg\{ \frac{1}{f^{z'}_y( \alpha^*)} + \frac{\mathbf{x}(0) \boldsymbol{\xi}}{\xi_\text{min} (y-1)} \frac{1}{1-\eta} \bigg\} \sum^\infty_{n=0} n^{r-1} \eta^n < \infty, \label{eq::zy_r_moment_bound}
\end{align}
where we recall that $\eta < 1$. By Remark 2.6 of \textcite{rebenich2018analog}, the upper bound in Equation (\ref{eq::zy_r_moment_bound}) is of order $O((r!)^2 (-2 \log(\eta)/\alpha^*)^r)$ as a function of $r$ in the limit $r \to \infty$. 

We can replicate this argument to establish a similar bound for $\EX[(Z^{z'}_y)^r]$, given an initial condition $\mathbf{x}(0)$ such that $x_1(0) < y$ and $\sum^d_{i=2} x_i > z'$. 

\end{subproof}


\begin{lemma}[Time to extinction] \label{lemma::extinction_moments}
For all $r \in \mathbbm{N}$ and $\mathbf{s} \in (\mathbbm{Z}_{\geq 0})^d$, $\EX[ (E_\mathbf{s})^r ] < \infty$.
\end{lemma}

\begin{subproof}
By Lemma \ref{lemma::first_passage_time}, $\mathbbm{P}(\liminf_{t \to \infty} \lVert \mathbf{x}(t) \rVert = \infty) = 0$. Applying the extinction-explosion dichotomy (\ref{eq::extinction_explosion_dichotomy}), it follows that $\lVert \mathbf{x}(t) \rVert \to 0$ almost surely as $t \to \infty$. To bound moments for the time to extinction, we draw on ideas from the proof of Lemma 6.1.4 of \textcite{anderson2012continuous}.

Since extinction occurs almost surely, for each $\mathbf{y} \in (\mathbbm{Z}_{\geq 0})^d$, there exists a finite sequence of jumps of length $S(\mathbf{y})$ such that $\upsilon(\mathbf{y}) := q_{\mathbf{y}, \mathbf{y}_1} q_{\mathbf{y}_1, \mathbf{y}_2} \dots q_{\mathbf{y}_{S(\mathbf{y})-1}, \mathbf{0}} > 0$ where $q_{\mathbf{y}, \mathbf{y}'}$ denotes the transition rate from state $\mathbf{y}$ to $\mathbf{y}'$. Set $S = \max_{\mathbf{y} \in \mathcal{C}^{z'}_y} S(\mathbf{y})$ and $\upsilon = \min_{\mathbf{y} \in \mathcal{C}^{z'}_y} \upsilon(\mathbf{y}) > 0$ where the domain $\mathcal{C}^{z'}_y$ is constructed as in Lemma \ref{lemma::first_passage_time}.

Suppose we initialise $\mathbf{x}(0) = \mathbf{y} \in \mathcal{C}^{z'}_y$. Denote by $\beta_{k, \mathbf{y}}$ the time of the $k^\text{th}$ jump with $\beta_{0, \mathbf{y}} = 0$ provided $\mathbf{x}(0) = \mathbf{y}$ where, to avoid trivial exceptions, we permit `jumps' within the absorbing state $\mathbf{0}$ at rate $\gamma_\text{min}$. By construction,
\begin{align*}
    \inf_{\mathbf{y} \in \mathcal{C}^{z'}_y} \mathbbm{P} \big( \mathbf{x}(\beta_{S, \mathbf{y}}) = \mathbf{0} \big) \geq \upsilon > 0.
\end{align*}
We additionally observe that the Gamma-distributed random variable with rate parameter $\gamma_\text{min}$ and shape parameter $S$ exhibits first order stochastic dominance over $\beta_{S, \mathbf{y}}$, whereby
\begin{align}
    \EX \big[ (\beta_{S, \mathbf{y}})^r \big] \leq \frac{\Gamma(S + r)}{\Gamma(S) (\gamma_\text{min})^r} \text{ for all } \mathbf{y} \in (\mathbbm{Z}_{\geq 0})^d. \label{eq::bs_r_moment_bound}
\end{align}

Let $E_\mathbf{y}$ denote the first hitting time for the state $\mathbf{0}$ under the condition $\mathbf{x}(0) = \mathbf{y} \in \mathcal{C}^{z'}_y$. On the interval $[0, E_\mathbf{y})$, we define a recursive sequence of stopping times with $B_{0, \mathbf{y}} = 0$ and
\begin{align*}
    B_{\ell, \mathbf{y}} & = \inf \big\{ t \geq B_{\ell-1, \mathbf{y}} + \beta_{S, \mathbf{x}(B_{\ell-1, \mathbf{y}})}: \mathbf{x}(t) \in \mathcal{C}^{z'}_y \big\}.
\end{align*}

Set $U_\mathbf{y} := \sup \{ \ell: B_{\ell, \mathbf{y}} < E_\mathbf{y} \}$. For all $\ell > U_\mathbf{y}$, we set $B_{\ell, \mathbf{y}} = E_\mathbf{y}$. By construction,
\begin{align}
    \mathbbm{P} \Big( \mathbf{x} \big( B_{\ell-1, \mathbf{y}} + \beta_{S, \mathbf{x}(B_{\ell-1}, \mathbf{y})} \big) = \mathbf{0}  \Big) \geq \upsilon \implies \mathbbm{P}( U_\mathbf{y} \geq \ell ) \leq (1 - \upsilon)^\ell \text{ for all } \mathbf{y} \in \mathcal{C}^{z'}_y. \label{eq::uy_geometric_bound}
\end{align}

By Lemma \ref{lemma::first_passage_time}, there exist constants $b^{(1)}_r, b^{(2)}_r < \infty$ such that
\begin{align}
    \EX \big[ & (B_{\ell, \mathbf{y}} - B_{\ell-1, \mathbf{y}} - \beta_{S, \mathbf{x}(B_{\ell-1, \mathbf{y}})})^r \, \big| \, \mathbf{x}(B_{\ell, \mathbf{y}} + \beta_{S, \mathbf{x}(B_{\ell-1, \mathbf{y}})}) \big] \notag \\
    & \leq b^{(1)}_r \mathbf{x}(B_{\ell-1, \mathbf{y}} + \beta_{S, \mathbf{x}(B_{\ell-1, \mathbf{y}})})  + b^{(2)}_r \big( \mathbf{x}(B_{\ell-1, \mathbf{y}} + \beta_{S, \mathbf{x}(B_{\ell-1, \mathbf{y}})} \big)^2.  \label{eq::x_return_bound_1}
\end{align}

Under Assumption \ref{assumption::type_1_carrying_capacity}, there exists a constant $C$ such that $\mu_{i,j}(x_1) \leq C$ and $\chi_{i, j, k}(x_1) \leq C$ for all $x_1 \geq 0$ and $i, j, k \in \{1, \dots, d\}$, whereby
\begin{align*}
    \EX\big[ \mathbf{x} \big( B_{\ell-1, \mathbf{y}} + \beta_{i, \mathbf{x}(B_{\ell-1}, \mathbf{y})} \big) \boldsymbol{\xi} - \mathbf{x} \big( B_{\ell-1, \mathbf{y}} + \beta_{i-1, \mathbf{x}(B_{\ell-1}, \mathbf{y})} \big) \boldsymbol{\xi}  \big] & \leq \xi_\text{max} d C \\
    \EX\big[ \big\{ \mathbf{x} \big( B_{\ell-1, \mathbf{y}} + \beta_{i, \mathbf{x}(B_{\ell-1}, \mathbf{y})} \big) \boldsymbol{\xi} - \mathbf{x} \big( B_{\ell-1, \mathbf{y}} + \beta_{i-1, \mathbf{x}(B_{\ell-1}, \mathbf{y})} \big) \boldsymbol{\xi} \big\}^2 \big] & \leq (\xi_\text{max} d)^2 C.
\end{align*}

Using Jensen's inequality, it then follows that
\begin{align}
\begin{split}
    \EX \big[ \mathbf{x} \big( B_{\ell-1, \mathbf{y}} + \beta_{S, \mathbf{x}(B_{\ell-1}, \mathbf{y})} \big) \boldsymbol{\xi}  \, \big| \, \mathbf{x}(B_{\ell-1}, \mathbf{y}) \in \mathcal{C}^{z'}_y \big] & \leq \xi_\text{max} (z' + y + d C S) \\
    \EX \big[ \big( \mathbf{x} \big( B_{\ell-1, \mathbf{y}} + \beta_{S, \mathbf{x}(B_{\ell-1}, \mathbf{y})} \big) \boldsymbol{\xi} \big)^2 \, \big| \, \mathbf{x}(B_{\ell-1}, \mathbf{y}) \in \mathcal{C}^{z'}_y \big] & \leq (S+1)^2 \xi_\text{max}^2 \big[ (z' + y)^2 + d^2 C \big].
\end{split} \label{eq::x_return_bound_2}
\end{align}

Taking the expectation over Equation (\ref{eq::x_return_bound_1}) and substituting Equation (\ref{eq::x_return_bound_2}) yields the existence of a constant $c_r < \infty$ such that
\begin{align}
    \EX \big[ & (B_{\ell, \mathbf{y}} - B_{\ell-1, \mathbf{y}} - \beta_{S, \mathbf{x}(B_{\ell-1, \mathbf{y}})})^r \big] \leq c_r. \label{eq::bl_r_moment_bound}
\end{align}

Using Equations (\ref{eq::bs_r_moment_bound}) and (\ref{eq::bl_r_moment_bound}) and Jensen's inequality, for $r \in \mathbbm{N}$, we then bound
\begin{align}
    \EX[ (E_\mathbf{y})^r \, | \, U_\mathbf{y} = n] &  \leq (2n)^{r-1} \sum^n_{\ell = 1} \EX \big[ (B_{\ell, \mathbf{y}} - B_{\ell-1, \mathbf{y}} - \beta_{S, \mathbf{x}(B_{\ell-1, \mathbf{y}})})^r + (\beta_{S, \mathbf{x}(B_{\ell-1, \mathbf{y}})})^r \, \big| \, U_\mathbf{y} = n \big] \notag \\
    & \leq 2^{r-1} n^r \bigg( c_r + \frac{\Gamma(S+r)}{\Gamma(S) (\gamma_\text{min})^r} \bigg). \label{eq::conditional_extinction_bound}
\end{align}

Applying the law of total expectation, and using the conditional bound (\ref{eq::conditional_extinction_bound}) in addition to the geometric bound (\ref{eq::uy_geometric_bound}) on $U_\mathbf{y}$, we thus conclude that
\begin{align}
    \EX[ (E_\mathbf{y})^r ] & = \sum^\infty_{n=1} \EX[ (E_\mathbf{y})^r \, | \, U_\mathbf{y} = n] \mathbbm{P}( U_\mathbf{y} = n ) \notag \\
    & \leq r 2^{r-1} \bigg( c_r + \frac{\Gamma(S+r)}{\Gamma(S) (\gamma_\text{min})^r} \bigg) \sum^\infty_{n=1} n^{r-1} \mathbbm{P}( U_\mathbf{y} \geq n ) \notag \\
    & \leq r 2^{r-1} \bigg( c_r + \frac{\Gamma(S+r)}{\Gamma(S) (\gamma_\text{min})^r} \bigg) \sum^\infty_{n=1} n^{r-1} (1 - \upsilon)^\ell < \infty. \label{eq::ey_moment_bound}
\end{align}

For any $r \in \mathbbm{N}$, combining Lemma \ref{lemma::first_passage_time}, which bounds the $r^\text{th}$ moment for the first passage time to the finite domain $\mathcal{C}^{z'}_y$ given an initial condition $\mathbf{x}(0) \in (\mathbbm{Z}_{\geq 0})^d \setminus \mathcal{C}^{z'}_y$, with the bound (\ref{eq::ey_moment_bound}) on the $r^\text{th}$ moment for the time to extinction given an initial condition $\mathbf{x}(0) \in \mathcal{C}^{z'}_y$ allows us to conclude that the time to extinction given any initial condition $\mathbf{x}(0) \in (\mathbbm{Z}_{\geq 0})^d$ possesses a finite $r^\text{th}$ moment.
\end{subproof}

The stated result follows from Lemma \ref{lemma::extinction_moments}, which draws on Lemmas \ref{lemma::downcrossing} to \ref{lemma::first_passage_time}.

\end{proof}

\begin{remark} \label{remark::exponential_moment}
In Theorem \ref{theorem::extinction_almost_surely}, we proved that the time to extinction possesses finite moments of all orders $r \in \mathbbm{N}$. Since our upper bound for the $r^\text{th}$ moment for the first passage time to a finite domain $\mathcal{C} \supset \{ 0 \}$ (Lemma \ref{lemma::first_passage_time}) is asymptotically of order $O((r!)^2 c^r)$ for a constant $c>0$ as $r \to \infty$, our construction is not sufficient to establish the existence of a finite exponential moment for the time to extinction (in our setting, this is equivalent to the property of exponential ergodicity, as defined by \textcite{down1995exponential}).

In general, the existence of an exponential moment for the time to extinction is a necessary condition for the existence of a quasi-stationary distribution (Proposition 2.4 of \parencite{collet2012quasi}). 

In the special case where $\mathbf{x}$ is irreducible on the set of non-absorbing states $(\mathbbm{Z}^d) \setminus \{ \mathbf{0} \}$, Theorem 1.1 of \textcite{ferrari1995existence} implies that an exponential moment for the time to extinction is also sufficient for the existence of a quasi-stationary distribution. This is because the process $\mathbf{x}$ is conservative, stable and non-explosive, with the transition rates (\ref{eq::transition_rates}) yielding a unique $Q$-process, in addition to exhibiting `asymptotic remoteness' \parencite{ferrari1995existence}: the time to extinction $E_\mathbf{s}$ when the branching process is initialised in state $\mathbf{x}(0) = \mathbf{s}$ exhibits first order stochastic dominance over the maximum order statistic of $\lVert \mathbf{s} \rVert$ independent and identically distributed exponential random variables with mean $\gamma_\text{max}$, whereby
\begin{align*}
    \lim_{\lVert \mathbf{s} \rVert \to \infty} \mathbbm{P}( E_{\mathbf{s}} < t) \leq  \lim_{\lVert \mathbf{s} \rVert \to \infty} (1-e^{-\gamma_\text{max} t})^{\lVert \mathbf{s} \rVert} = 0 \text{ for any } t \geq 0.
\end{align*}

We conjecture that the time to extinction possesses an exponential moment, but have been unable to prove this conjecture thus far.
\end{remark}

\section{Scaling limits} \label{sec::scaling_limits}

In the single-type case, under sufficiently strict subcriticality conditions, a size-dependent branching processes initialised in the vicinity of its soft carrying capacity $K$ can be shown to linger in the domain $[(1-\varepsilon)K, (1+\varepsilon) K]$ for arbitrarily small $\varepsilon > 0$ for order $\mathcal{O}(\exp(c_\varepsilon K))$ time \parencite{jagers2011population, hamza2016establishment}. In the multitype case, the interplay between different types can yield more complex dynamical behaviour, including self-sustained oscillations; however, under some parameter regimes, we recover an analogous exponential time scale for persistence in the vicinity of an appropriate metastable state. We now draw on the classical limit theorems of \textcite{ethier2009markov} with a view towards characterising this behaviour.

In the ensuing analysis, we consider scaling limits with respect to the type 1 carrying capacity $K$. To this end, we construct sequences of density-dependent Markovian branching process models $\mathbf{x}^K$ indexed by $K$. We use the superscript $K$ only in the context of a sequence of processes indexed by $K$ and enforce Assumption \ref{assumption:density_dependent_process} below, motivated by the work of \parencite{ethier2009markov, prodhomme2023strong}. 

\begin{assumption} \label{assumption:density_dependent_process}
For the branching process $\mathbf{x}^K$ with type 1 carrying capacity $K$:
\begin{itemize}
    \item the death rate for a type $i$ individual when the type $1$ subpopulation is of size $K z_1$ takes the form $\gamma_i^K(K z_1) = g_i(z_1)$, where the function $g_i(z_1)$ with respect to the continuous variable $z_1$ is bounded, continuously differentiable with locally Lipschitz first derivative, and strictly positive on the domain $\mathbbm{R}_{\geq 0}$ with $\inf_{i \in \{1, \dots, d\}, z_1 \in \mathbbm{R}_{\geq 0}} g_i(z_1) > 0$.
    \item the probability that a type $i$ individual will give rise to an offspring batch of size $\boldsymbol{\ell} \in (\mathbbm{Z}_{\geq 0})^d$, when the type $1$ subpopulation is of size $K z_1$, takes the form $\pi^K_{(i, \boldsymbol{\ell})}(K z_1) = p_{(i, \boldsymbol{\ell})}(z_1)$, where each function $p_{(i, \boldsymbol{\ell})}(z_1)$ with respect to the continuous variable $z_1$ is continuously differentiable with locally Lipschitz first derivative, and either strictly positive (with $\inf_{z_1 \in \mathbbm{R}_{\geq 0}} p_{(i, \boldsymbol{\ell})}(z_1) > 0$) or identically zero, on the domain $\mathbbm{R}_{\geq 0}$. 
    \item the first $\mu^K_{i,j}(K z_1) = m_{i,j}(z_1)  = \sum_{\boldsymbol{\ell} \in \mathcal{J}} \ell_j p_{(i, \boldsymbol{\ell})}(z_1)$ and second $\chi^K_{i,j_1, j_2}(K z_1) = c_{i,j_1, j_2}(z_1) = \sum_{\boldsymbol{\ell} \in \mathcal{J}} \ell_{j_1} \ell_{j_2} p_{(i, \boldsymbol{\ell})}(z_1)$ moments of the offspring batch sizes are uniformly bounded, continuously differentiable functions of the continuous variable $z_1$ on the domain $\mathbbm{R}_{\geq 0}$.
\end{itemize}
\end{assumption}

Under Assumption \ref{assumption:density_dependent_process}, transition rates for the process $\mathbf{x}^K$ can be written in the density-dependent form
\begin{align*}
    \mathbf{x} \to \mathbf{x} + \boldsymbol{\ell} - \mathbf{e}_i \text{ at rate } K  \Big[ g_i \Big( \frac{x_1}{K} \Big) \cdot p_{(i, \boldsymbol{\ell})} \Big( \frac{x_1}{K} \Big) \cdot \frac{x_i}{K} \Big].
\end{align*}
Under Assumption \ref{assumption::type_1_carrying_capacity}, the process $\mathbf{x}^K$ is non-explosive; for any finite interval $[0, t]$, it therefore permits the representation
\begin{align*}
    \mathbf{x}^K(t) = \mathbf{x}^K(0) + \sum^d_{i=1} \sum_{\boldsymbol{\ell} \in \mathcal{J}} \big(\boldsymbol{\ell} - \mathbf{e_i} \big) Y_{(i, \boldsymbol{\ell})} \bigg( K \int^t_0 \bigg[ g_i \Big( \frac{x_1(s)}{K} \Big) \cdot p_{(i, \boldsymbol{\ell})} \Big( \frac{x_1(s)}{K} \Big) \cdot \frac{x_i(s)}{K} \bigg] ds \bigg)
\end{align*}
where $Y_{(i, \boldsymbol{\ell})}(\cdot)$ denote independent standard Poisson processes (Theorem 6.4.1 of \parencite{ethier2009markov}).

We introduce the following constants for the ensuing analysis:
\begin{align}
    g_\text{min} & := \inf_{i \in \{1, \dots, d\}, z \in \mathbbm{R}_{\geq 0}} g_i(z) \label{constant::scaling_gmin} \\
    g_\text{max} & := \sup_{i \in \{1, \dots, d\}, z \in \mathbbm{R}_{\geq 0}} g_i(z) \label{constant::scaling_gmax} \\
    b & := \sup_{i, j \in \{1, \dots, d\}, z \in \mathbbm{R}_{\geq 0}} \gamma_i(z) m_{i,j}(z) \label{constant::scaling_b}.
\end{align}

Under Assumption \ref{assumption:density_dependent_process}, it is necessarily the case that $g_\text{min} > 0$.

Hereafter, we denote by $A(z_1) \in \mathbbm{R}^{d \times d}$ the offspring generator matrix, with
\begin{align*}
     A_{i,j}(z_1) := g_j(z_1) \big[ m_{j,i}(z_1) - \delta_{i,j} \big].
\end{align*}
Set $\lambda_\text{max}(z_1)$ to be the spectral abscissa and $A'(z_1)$ to be the element-wise first derivative of $A(z_1)$. It follows from Assumption \ref{assumption::type_1_carrying_capacity} that:

\begin{assumptionp}{A$'$} \label{assumption::limit_theorem_scaled_carrying_capacity}
$\text{sign} (\lambda_\text{max}(z_1)) = \text{sign}(1 - z_1)$, with $A(z_1)$ converging element-wise in the limits $z_1 \to 0$ and $z_1 \to \infty$ respectively.
\end{assumptionp}

\subsection{Limit theorems} \label{sec::limit_theorems}

Let $\mathbf{X}(t)$ be the solution to the system of ordinary differential equations (ODEs)
\begin{align}
    \frac{d \mathbf{X}}{dt} &= A(X_1) \mathbf{X}\label{eq::FLLN_ODE}
\end{align}
and $\mathbf{V}(t)$ be the solution to the stochastic differential equation (SDE)
\begin{align}
    \mathbf{V}(t) = & \mathbf{V}(0) +  \sum^d_{i=1} \sum_{ \boldsymbol{\ell} \in \mathcal{J}} (\boldsymbol{\ell} - \mathbf{e}_i) W_{(i, \boldsymbol{\ell})} \bigg( \int^t_0 g_i \big(  X_1(s) \big) p_{(i, \boldsymbol{\ell})} \big(  X_1(s) \big) X_i(s) ds \bigg) \notag \\
    & + \int^t_0 \Big[ A \big( X_1(s) \big) + A' \big( X_1(s) \big) \cdot \mathbf{X}(s) \cdot (\mathbf{e}_1)^T \Big] \mathbf{V}(s) ds \label{eq::CLT_SDE}
\end{align}
where $ W_{(i, \boldsymbol{\ell})}$ represent independent (standard) Brownian motions for each $i \in \{1, \dots, d\}$ and $\boldsymbol{\ell} \in \mathcal{J}$, and the derivative $A'$ is calculated element-wise.

The functional law of large numbers (FLLN) of \textcite{ethier2009markov} yields strong convergence of the scaled sample paths $\mathbf{x}^K/K$ to the deterministic trajectory $\mathbf{X}$ on any finite time horizon.

\begin{theorem}[Functional law of large numbers, Theorem 11.2.1 of \parencite{ethier2009markov}] \label{theorem::FLLN}
Suppose Assumption \ref{assumption:density_dependent_process} holds and $\mathbf{X}(0) = \lim_{K \to \infty} \mathbf{x}^K(0)/K$. Then 
\begin{align*}
    \lim_{K \to \infty} \sup_{s \leq t} \bigg| \frac{\mathbf{x}^K(s)}{K} - \mathbf{X}(s) \bigg| = 0 \text{ almost surely for all } t \geq 0.
\end{align*}
\end{theorem}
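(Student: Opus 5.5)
The plan is to verify the hypotheses of Theorem 11.2.1 of \textcite{ethier2009markov} for the density-dependent family $\mathbf{x}^K$. Write the jump vectors as $\boldsymbol{\ell} - \mathbf{e}_i$. Write the scaled intensities as $\beta_{(i,\boldsymbol{\ell})}(\mathbf{z}) = g_i(z_1) p_{(i,\boldsymbol{\ell})}(z_1) z_i$, so that $\mathbf{x}^K$ jumps by $\boldsymbol{\ell}-\mathbf{e}_i$ at rate $K\beta_{(i,\boldsymbol{\ell})}(\mathbf{x}^K/K)$. The corresponding drift is
\begin{align*}
F(\mathbf{z}) = \sum_{i=1}^d \sum_{\boldsymbol{\ell}\in\mathcal{J}} (\boldsymbol{\ell}-\mathbf{e}_i)\, g_i(z_1) p_{(i,\boldsymbol{\ell})}(z_1) z_i = A(z_1)\mathbf{z},
\end{align*}
which matches the right-hand side of (\ref{eq::FLLN_ODE}). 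The theorem requires two conditions on each compact set $\mathcal{K}\subset(\mathbbm{R}_{\geq 0})^d$. First, $\sup_{\mathbf{z}\in\mathcal{K}} \sum_{i,\boldsymbol{\ell}} \lVert \boldsymbol{\ell}-\mathbf{e}_i\rVert \beta_{(i,\boldsymbol{\ell})}(\mathbf{z}) < \infty$. Second, $F$ is Lipschitz on $\mathcal{K}$. The first follows at once from Assumption \ref{assumption:density_dependent_process}. On $\mathcal{K}$ the sum is at most $g_\text{max}\max_i z_i \sum_{j} (m_{i,j}(z_1)+1)$, and this is finite because the first moments are uniformly bounded.

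For the Lipschitz property, note that each entry $A_{i,j}(z_1) = g_j(z_1)(m_{j,i}(z_1)-\delta_{i,j})$ is a product of bounded $C^1$ functions, so it is $C^1$ and hence Lipschitz on compact intervals. Then for $\mathbf{z},\mathbf{w}\in\mathcal{K}$ I would split
\begin{align*}
|A(z_1)\mathbf{z} - A(w_1)\mathbf{w}| \leq |A(z_1)-A(w_1)|\,|\mathbf{z}| + |A(w_1)|\,|\mathbf{z}-\mathbf{w}|,
\end{align*}
and both terms are bounded by a constant times $|\mathbf{z}-\mathbf{w}|$ on $\mathcal{K}$. This local Lipschitz property gives a unique local solution $\mathbf{X}$ of (\ref{eq::FLLN_ODE}). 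I would also check that the solution is global and stays in $(\mathbbm{R}_{\geq 0})^d$, so that the theorem applies on every horizon $[0,t]$. Invariance of the nonnegative orthant holds because the off-diagonal entries of $A$ are nonnegative. Global existence holds because the uniform bounds on $g_i$ and $m_{i,j}$ give $|A(z_1)|\leq C$ for all $z_1$, so Grönwall yields $|\mathbf{X}(s)|\leq e^{Cs}|\mathbf{X}(0)|$ and blow-up is ruled out.

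With these facts, I would invoke Theorem 11.2.1 of \parencite{ethier2009markov} through the Poisson random time-change representation stated before the theorem. Compare $\mathbf{x}^K/K$ with $\mathbf{X}$ up to the exit time from a compact neighbourhood $\mathcal{K}$ of $\{\mathbf{X}(s): s\leq t\}$. On this interval, the centred Poisson terms $K^{-1}\tilde Y_{(i,\boldsymbol{\ell})}(K\cdot)$ converge to zero uniformly on compacts almost surely, by the strong law of large numbers for Poisson processes. Grönwall's inequality then gives $\sup_{s\leq t}|\mathbf{x}^K(s)/K-\mathbf{X}(s)|\to 0$. In particular, the exit time eventually exceeds $t$, which removes the localisation.

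I expect the main obstacle to be that $\mathcal{J}$ may be infinite, so the sum over $\boldsymbol{\ell}$ involves infinitely many Poisson processes. Uniform almost sure convergence of the aggregated noise then needs an additional argument. The first attempt would be to follow Ethier--Kurtz in dominating the tail: truncate to finitely many $\boldsymbol{\ell}$, and control the remainder by the tail of $\sum_{\boldsymbol{\ell}}\lVert\boldsymbol{\ell}\rVert p_{(i,\boldsymbol{\ell})}$, which is uniformly small because of the bounded (indeed $C^1$, uniformly bounded) moments. Under Assumption \ref{assumption::bounded_batch_size} this difficulty does not arise.
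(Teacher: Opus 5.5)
Your route is the paper's route. The paper gives no argument beyond citing Theorem 11.2.1 of Ethier--Kurtz. Your identification of the drift $F(\mathbf{z})=A(z_1)\mathbf{z}$, the local Lipschitz bound, global existence and invariance of the orthant are all correct, and go beyond what the paper checks.

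The real problem is the first hypothesis, which you have misquoted. Ethier--Kurtz require, for each compact $\mathcal{K}$,
\begin{align*}
\sum_{i=1}^d \sum_{\boldsymbol{\ell}\in\mathcal{J}} \lVert \boldsymbol{\ell}-\mathbf{e}_i\rVert \, \bar\beta_{(i,\boldsymbol{\ell})} < \infty, \qquad \bar\beta_{(i,\boldsymbol{\ell})}:=\sup_{\mathbf{z}\in\mathcal{K}} \beta_{(i,\boldsymbol{\ell})}(\mathbf{z}),
\end{align*}
with the supremum \emph{inside} the sum. This is exactly what gives the almost-sure conclusion. Each noise term is dominated by $\lVert\boldsymbol{\ell}-\mathbf{e}_i\rVert K^{-1}\big[Y_{(i,\boldsymbol{\ell})}(Kt\bar\beta_{(i,\boldsymbol{\ell})})+Kt\bar\beta_{(i,\boldsymbol{\ell})}\big]$. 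These dominating terms sum to something converging a.s. to $2t\sum\lVert\boldsymbol{\ell}-\mathbf{e}_i\rVert\bar\beta_{(i,\boldsymbol{\ell})}$, so dominated convergence over $\boldsymbol{\ell}$ applies only if that sum is finite. The uniform moment bounds in Assumption \ref{assumption:density_dependent_process} only give your sup-outside version. When $\mathcal{J}$ is finite, in particular under Assumption \ref{assumption::bounded_batch_size}, the two conditions coincide and your proof is complete.

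When $\mathcal{J}$ is infinite, the sup-inside condition does not follow from Assumption \ref{assumption:density_dependent_process}. One can let the mass of $p_{(i,\cdot)}(z_1)$ drift to ever larger $\boldsymbol{\ell}$ as $z_1\downarrow 0$ so that $\sum_{\boldsymbol{\ell}}\lVert\boldsymbol{\ell}\rVert\sup_{z_1\leq 1}p_{(i,\boldsymbol{\ell})}(z_1)=\infty$, while all first and second moments stay bounded and $C^1$.

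Your truncation does not repair this. Uniform smallness of $\sup_{z_1\leq R}\sum_{\lVert\boldsymbol{\ell}\rVert>L}\lVert\boldsymbol{\ell}\rVert p_{(i,\boldsymbol{\ell})}(z_1)$ does hold, by the second-moment bound or by Dini's theorem using continuity of $m_{i,j}$. But it only controls the compensator of the tail, and hence the \emph{expected} size of the tail jump sum $\sum_{\lVert\boldsymbol{\ell}\rVert>L}\lVert\boldsymbol{\ell}\rVert K^{-1}Y_{(i,\boldsymbol{\ell})}\big(K\int_0^t\beta_{(i,\boldsymbol{\ell})}(\mathbf{x}^K(s)/K)\,ds\big)$. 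With Markov's inequality and Gr\"onwall this gives convergence in probability, not almost sure convergence.

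For the stated a.s. result with unbounded batches, you must either add the sup-inside summability as a hypothesis or find a different argument. The paper is silent on this point too. Wherever it actually applies the FLLN (Sections \ref{sec::metastable_behaviour}--\ref{sec::example_parasite_model}), Assumption \ref{assumption::bounded_batch_size} is in force, so the finite case suffices there.
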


The central limit theorem (CLT) of \textcite{ethier2009markov} yields weak convergence of the scaled difference $\sqrt{K} (\mathbf{x}^K/K - \mathbf{X})$ to the process $\mathbf{V}$.

\begin{theorem}[Central limit theorem, Theorem 11.2.3 of \parencite{ethier2009markov}] \label{theorem::central_limit}
Suppose Assumption \ref{assumption:density_dependent_process} holds and $\mathbf{V}(0) = \lim_{K \to \infty} \sqrt{K}(\mathbf{x}^K(0)/K - \mathbf{X}(0))$. Then
\begin{align*}
    \lim_{K \to \infty} \sqrt{K} \bigg( \frac{\mathbf{x}^K(t)}{K} - \mathbf{X}(t) \bigg) \to \mathbf{V}(t) \text{ weakly for any } t \geq 0.
\end{align*}
\end{theorem}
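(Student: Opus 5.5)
Theorem~\ref{theorem::central_limit} is Theorem 11.2.3 of \parencite{ethier2009markov} specialised to our density-dependent family, so my plan is to check that theorem's hypotheses for $\mathbf{x}^K$ and identify its limit with the SDE (\ref{eq::CLT_SDE}). I first put the process in the Ethier--Kurtz form. The jump vectors are $\boldsymbol{\ell} - \mathbf{e}_i$, with intensities $K\beta_{(i,\boldsymbol{\ell})}(\mathbf{x}/K)$, where
\begin{align*}
    \beta_{(i,\boldsymbol{\ell})}(\mathbf{z}) = g_i(z_1)\, p_{(i,\boldsymbol{\ell})}(z_1)\, z_i .
\end{align*}
The drift is therefore
\begin{align*}
    F(\mathbf{z}) = \sum_{i,\boldsymbol{\ell}} (\boldsymbol{\ell}-\mathbf{e}_i)\beta_{(i,\boldsymbol{\ell})}(\mathbf{z}) = A(z_1)\mathbf{z}.
\end{align*}
This uses $\sum_{\boldsymbol{\ell}} \ell_j p_{(i,\boldsymbol{\ell})} = m_{i,j}$ and $\sum_{\boldsymbol{\ell}} p_{(i,\boldsymbol{\ell})} = 1$. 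Hence the deterministic limit is (\ref{eq::FLLN_ODE}), consistent with Theorem~\ref{theorem::FLLN}.

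Next I compute the Jacobian of $F$. Since $A$ depends only on $z_1$, the product rule gives
\begin{align*}
    \partial F(\mathbf{z}) = A(z_1) + A'(z_1)\,\mathbf{z}\,(\mathbf{e}_1)^T .
\end{align*}
This is exactly the linear drift coefficient in (\ref{eq::CLT_SDE}). The noise term in Theorem 11.2.3 is $\sum (\boldsymbol{\ell}-\mathbf{e}_i) W_{(i,\boldsymbol{\ell})}\big(\int_0^t \beta_{(i,\boldsymbol{\ell})}(\mathbf{X}(s))\,ds\big)$, which matches (\ref{eq::CLT_SDE}) term by term. So once the hypotheses hold, the limit process coincides with $\mathbf{V}$.

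The hypotheses to verify, on compact sets $\mathcal{K}$ of $(\mathbbm{R}_{\geq 0})^d$ (a neighbourhood of the FLLN trajectory on $[0,t]$), are:
\begin{itemize}
    \item $\sup_{\mathbf{z}\in\mathcal{K}}\sum_{i,\boldsymbol{\ell}}\lVert\boldsymbol{\ell}-\mathbf{e}_i\rVert_2^2\,\beta_{(i,\boldsymbol{\ell})}(\mathbf{z})<\infty$;
    \item the uniform integrability / tail condition $\lim_{a\to\infty}\sup_{\mathcal{K}}\sum\lVert\boldsymbol{\ell}-\mathbf{e}_i\rVert_2^2\mathbbm{1}\{\lVert\boldsymbol{\ell}-\mathbf{e}_i\rVert_2>a\}\,\beta_{(i,\boldsymbol{\ell})}(\mathbf{z})=0$;
    \item $F$ and $\partial F$ continuous, with the appropriate Lipschitz regularity.
\end{itemize}

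The first bound follows from the uniform boundedness of $g_i$ and of the second moments $c_{i,j_1,j_2}$ in Assumption~\ref{assumption:density_dependent_process}, together with $z_i$ being bounded on $\mathcal{K}$.

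The tail condition is the step I expect to be the main obstacle, because Assumption~\ref{assumption:density_dependent_process} gives only uniformly bounded second moments, not uniform integrability of $\lVert\boldsymbol{\ell}\rVert^2$ under $p_{(i,\cdot)}(z_1)$. I would handle it in one of two ways:
\begin{itemize}
    \item invoke Assumption~\ref{assumption::bounded_batch_size}, under which the condition is trivial since the support is bounded by $\Xi$;
    \item otherwise, argue via Dini: each $p_{(i,\boldsymbol{\ell})}$ is continuous, $c_{i,j,j}$ is continuous, and the partial sums $\sum_{\lVert\boldsymbol{\ell}\rVert\le a}\ell_j^2p_{(i,\boldsymbol{\ell})}(z_1)$ increase monotonically to the continuous function $c_{i,j,j}(z_1)$. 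Dini's theorem then gives uniform convergence on the compact range of $z_1$, which yields the tail condition.
\end{itemize}

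For regularity, $F$ is $C^1$ with locally Lipschitz derivative, because $g_i$ and $m_{i,j}$ are $C^1$ with locally Lipschitz derivatives. This gives uniqueness for (\ref{eq::FLLN_ODE}) and the linear SDE.

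Finally, I need the trajectory $\mathbf{X}$ to stay in a compact set on $[0,t]$. This holds because $\lVert A\rVert$ is bounded, so Gronwall gives $\lVert\mathbf{X}(s)\rVert\le e^{Cs}\lVert\mathbf{X}(0)\rVert$. Since the hypotheses of Theorem 11.2.3 are stated only on compact sets, and the process leaves a neighbourhood of the trajectory with vanishing probability by Theorem~\ref{theorem::FLLN}, the local versions of the hypotheses suffice (if needed, via localisation by a stopping time). Given $\sqrt{K}(\mathbf{x}^K(0)/K-\mathbf{X}(0))\to\mathbf{V}(0)$, the theorem yields weak convergence of the process, and in particular of the marginal at each fixed $t$.
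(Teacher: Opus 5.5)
Your route is the paper's own. The paper gives no argument beyond citing Theorem 11.2.3 of Ethier--Kurtz, and you correctly identify the drift $F(\mathbf{z})=A(z_1)\mathbf{z}$, the Jacobian $\partial F(\mathbf{z})=A(z_1)+A'(z_1)\mathbf{z}(\mathbf{e}_1)^T$, and the noise term with (\ref{eq::CLT_SDE}).

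What needs fixing is your list of hypotheses. Condition (2.12) of Ethier--Kurtz reads
\[
\sum_{i,\boldsymbol{\ell}}\lVert\boldsymbol{\ell}-\mathbf{e}_i\rVert_2^2\,\sup_{\mathbf{z}\in\mathcal{K}}\beta_{(i,\boldsymbol{\ell})}(\mathbf{z})<\infty\quad\text{for every compact }\mathcal{K},
\]
with the supremum \emph{inside} the sum. This is strictly stronger than your pair ``supremum outside the sum plus uniformly vanishing tails'', and your Dini argument delivers only the latter. Under Assumption \ref{assumption::bounded_batch_size} the sum is finite and (2.12) is immediate, but that assumption is not part of the statement. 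Assumption \ref{assumption:density_dependent_process} alone does not imply (2.12). For instance, over a fixed strictly positive background, slide a probability mass of about $n^{-3}$ from the batch $n\mathbf{e}_2$ to $(n+1)\mathbf{e}_2$ while $z_1$ crosses an interval of length $n^{-3/2}$, for $n=1,2,\dots$. The moving mass carries second moment $O(n^{-1})$, so all the regularity demanded of $p_{(i,\boldsymbol{\ell})}$, $m_{i,j}$ and $c_{i,j_1,j_2}$ can be met. Yet $\sum_{\boldsymbol{\ell}}\lVert\boldsymbol{\ell}\rVert_2^2\sup_{z_1}p_{(i,\boldsymbol{\ell})}(z_1)\gtrsim\sum_n n^{-1}=\infty$. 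So without bounded support you cannot literally invoke Theorem 11.2.3, and the paper's bare citation has the same blind spot.

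The conclusion survives, and the conditions you did verify are exactly what a direct argument needs. Localise at the exit time of $\mathbf{x}^K/K$ from a compact neighbourhood $\mathcal{K}$ of the FLLN path. Then apply the martingale central limit theorem (Theorem 7.1.4 of Ethier--Kurtz) to
\[
M^K(t)=K^{-1/2}\sum_{i,\boldsymbol{\ell}}(\boldsymbol{\ell}-\mathbf{e}_i)\,\tilde{Y}_{(i,\boldsymbol{\ell})}\Big(K\int_0^t\beta_{(i,\boldsymbol{\ell})}\big(\mathbf{x}^K(s)/K\big)\,ds\Big),\qquad \tilde{Y}(u)=Y(u)-u .
\]
Your uniform tail bound gives $\EX[\sup_{s\le t}\lVert\Delta M^K(s)\rVert_2^2]\le a^2/K+t\,\sup_{\mathbf{z}\in\mathcal{K}}\sum_{\lVert\boldsymbol{\ell}-\mathbf{e}_i\rVert_2>a}\lVert\boldsymbol{\ell}-\mathbf{e}_i\rVert_2^2\,\beta_{(i,\boldsymbol{\ell})}(\mathbf{z})$, which vanishes on letting $K\to\infty$ and then $a\to\infty$. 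Continuity of $g_i$, $m_{i,j}$, $c_{i,j_1,j_2}$, together with Theorem \ref{theorem::FLLN}, gives convergence of $\langle M^K\rangle_t=\int_0^t\sum_{i,\boldsymbol{\ell}}(\boldsymbol{\ell}-\mathbf{e}_i)(\boldsymbol{\ell}-\mathbf{e}_i)^T\beta_{(i,\boldsymbol{\ell})}(\mathbf{x}^K(s)/K)\,ds$ to the same integral along $\mathbf{X}$. The limit is therefore a continuous Gaussian martingale with the law of the $W$-term in (\ref{eq::CLT_SDE}). The drift $\sqrt{K}\big(F(\mathbf{x}^K/K)-F(\mathbf{X})\big)$ is then handled as in Ethier--Kurtz's own proof: use the mean-value form, continuity of $\partial F$, and continuity of the solution map of the linear integral equation. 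Alternatively, add Assumption \ref{assumption::bounded_batch_size}, or condition (2.12) itself, to the statement.
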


Under the stronger additional assumptions that offspring batch sizes have uniformly bounded support (Assumption \ref{assumption::bounded_batch_size}), and that the death rates $g(z_1)$ and mean offspring batch sizes $m_{i,j}(z_1)$ have uniformly bounded first and second derivatives, the asymptotic precision of the approximating process $\mathbf{X} + \mathbf{V}/\sqrt{K}$ relative to the scaled process $\mathbf{x}^K/K$ can be shown to be of order $\mathcal{O}(\log(K)/K)$ on any fixed time horizon (Theorem 4.4 of \textcite{kurtz1978strong}).

\subsection{Limiting behaviour of the FLLN} \label{sec::FLLN_limiting_behaviour}

Given an initial population size of order $\lVert \mathbf{x}^K(0) \rVert = \mathcal{O}(K)$, the FLLN and CLT of \parencite{ethier2009markov} yield approximating processes that are asymptotically valid in the limit $K \to \infty$ on any finite time horizon. Metastable behaviour, with a (sub)exponential (with respect to $K$) time scale for persistence in the vicinity of an appropriate non-zero state, is expected to emerge if the scaled process $\mathbf{x}^K/K$ reaches the domain of attraction of an exponentially stable equilibrium point \parencite{barbour1976quasi, barbour2012total, prodhomme2023strong} (when it exists) or in the vicinity of a stable limit cycle \parencite{bressloff2020phase} (when it exists) of the system of ODEs (\ref{eq::FLLN_ODE}) constituting the FLLN. We now address the limiting behaviour of the FLLN.

For a single type model with a soft carrying capacity ($d=1$), the FLLN can be shown to possess a unique non-zero equilibrium with domain of attraction $\mathbbm{R}_{>0}$ (since the population size is expected to decay above the soft carrying capacity, but grow below it), whereby metastable behaviour can be expected if a population size of order $\mathcal{O}(K)$ is attained prior to extinction. In contrast, under our multitype setting, with a soft carrying capacity imposed on the type 1 subpopulation, an exponentially stable equilibrium point of (\ref{eq::FLLN_ODE}) need not exist, let alone possess domain of attraction $(\mathbbm{R}_{\geq 0})^d \setminus \{ \mathbf{0} \}$; we can construct explicit examples of systems which exhibit periodic orbits (see Examples \ref{example::2D_limit_cycle} and \ref{example::3D_limit_cycle}, and Section \ref{sec::FLLN_periodic_2D_omega_limit} respectively).

In the ensuing analysis, we restrict our attention to the two cases detailed in Assumption \ref{assumption::irreducibility}.

\begin{assumption} \label{assumption::irreducibility}
Either:
\begin{enumerate}
    \item[(i)] $A(z_1)$ is irreducible for all $z_1 \geq 0$; or
    \item[(ii)] the principal submatrix $A_{2:d}(z_1)$ obtained by removing the first row and column of $A(z_1)$ is irreducible, while type 1 individuals generate no offspring for all $z_1 \geq 0$.
\end{enumerate}
\end{assumption}

The irreducible case is technically simplest. The second case in Assumption \ref{assumption::irreducibility} allows the interpretation of the type 1 subpopulation as a measure of cumulative exposure or `memory' to type $2, \dots, d$ individuals on a specified time scale, in line with the motivating framework of an immunity-modulated multi-stage parasitic disease (see Section \ref{sec::example_parasite_model} for details of this construction).

\subsubsection{Equilibrium solutions}

In Theorem \ref{theorem::FLLN_equilibrium} below, we characterise the equilibrium points of the system (\ref{eq::FLLN_ODE}) under Assumptions \ref{assumption::limit_theorem_scaled_carrying_capacity}, \ref{assumption::type_1_offspring}, \ref{assumption:density_dependent_process} and \ref{assumption::irreducibility}.

\begin{theorem} \label{theorem::FLLN_equilibrium}
Under Assumptions \ref{assumption::limit_theorem_scaled_carrying_capacity}, \ref{assumption::type_1_offspring}, \ref{assumption:density_dependent_process} and \ref{assumption::irreducibility}, the system of ODEs (\ref{eq::FLLN_ODE}) possesses bounded trajectories and is invariant on the non-negative orthant. There exist two equilibria in the non-negative orthant:
\begin{itemize}
    \item $\mathbf{X}^*=\mathbf{0}$, which is asymptotically unstable with no associated homoclinic orbits in the non-negative orthant.
    \item $\mathbf{X}^* = \mathbf{Y}^*$, where $\mathbf{Y}^*$ is the right eigenvector of $A(1)$ corresponding to the eigenvalue zero, normalised to yield $Y_1^*=1$. This equilibrium point is exponentially stable iff the Jacobian
    \begin{align}
        \boldsymbol{\theta} := A(1) +  A'(1) \cdot \mathbf{Y}^*  \cdot (\mathbf{e}_1)^T, \label{eq::theta_Jacobian}
    \end{align}
    is Hurwitz stable.
\end{itemize}
\end{theorem}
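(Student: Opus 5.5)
The plan is to treat the four claims in turn: invariance, boundedness, classification of equilibria, and the stability of each. \emph{Invariance of the non-negative orthant} is immediate because the off-diagonal entries of $A(X_1)$ are non-negative. Indeed $A_{i,j}=g_j m_{j,i}\ge 0$ for $i\neq j$, so on the face $X_i=0$ we have $\dot X_i=\sum_{j\neq i}A_{i,j}X_j\ge 0$.

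\emph{Boundedness} is the continuous analogue of the argument in Theorem \ref{theorem::extinction_almost_surely}. Take $\boldsymbol{\xi}>0$ with $\boldsymbol{\xi}^T H<0$, where $H=\lim_{z_1\to\infty}A(z_1)^T$ in the present transposed convention. Choose $y$ as in (\ref{eq::threshold_y}), so that $\frac{d}{dt}\boldsymbol{\xi}^T\mathbf{X}\le -q\,\boldsymbol{\xi}^T\mathbf{X}$ whenever $X_1>y$. When $X_1\le y$, Assumption \ref{assumption::type_1_offspring} gives
\begin{align*}
\dot X_1\ge \varepsilon g_\text{min}\textstyle\sum_{i\ge2}X_i - g_\text{max}y .
\end{align*}
Hence, once $\sum_{i\ge 2}X_i$ exceeds a large $z$, $X_1$ reaches $y$ within time $O(y/(\varepsilon g_\text{min} z))$. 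During that excursion $\boldsymbol{\xi}^T\mathbf{X}$ grows by at most a factor $e^{b\xi_\text{max}\cdot O(1/z)}$, while each subsequent stay above $y$ contracts it. The step I expect to be delicate is turning this into a global bound. I would build a Lyapunov-type function $V=\boldsymbol{\xi}^T\mathbf{X}\,\phi(X_1)$, choosing $\phi$ decreasing on $[0,y]$ so that the term $\phi'(X_1)\dot X_1$ compensates the possible growth when $\sum_{i\ge2}X_i$ is large. I would then show that $V$ is nonincreasing outside a compact set. If that fails, I would fall back on the deterministic analogue of the up/downcrossing argument of Lemmas \ref{lemma::downcrossing}--\ref{lemma::first_passage_time}.

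\emph{Equilibria.} An equilibrium satisfies $A(X_1^*)\mathbf{X}^*=0$. If $\mathbf{X}^*\neq 0$ then $0$ is an eigenvalue of $A(X_1^*)$ with a non-negative eigenvector. Under Assumption \ref{assumption::irreducibility}(i), Perron--Frobenius implies that the only eigenvalue with a non-negative eigenvector is the spectral abscissa. Therefore $\lambda_\text{max}(X_1^*)=0$, which by Assumption \ref{assumption::limit_theorem_scaled_carrying_capacity} forces $X_1^*=1$, and $\mathbf{X}^*$ is the positive Perron vector normalised so that $Y_1^*=1$. We also need $X_1^*>0$: if $X_1^*=0$, irreducibility of a nonzero Perron vector gives all components positive, a contradiction.

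In case (ii) the first column of $A$ is $-g_1\mathbf{e}_1$, so $A$ is block triangular and the spectrum is $\{-g_1\}\cup\operatorname{spec}(A_{2:d})$. Hence $\lambda_\text{max}=\lambda_{\max}(A_{2:d})$ whenever this is $> -g_1$, which holds for $z_1$ near $1$. A nonzero equilibrium must have $X_{2:d}\neq0$, since otherwise $X_1$ decays to $0$. Then $A_{2:d}X_{2:d}=0$ with $X_{2:d}$ non-negative gives $X_1^*=1$ by the same Perron argument. The first row then determines $X_1^*=1$ consistently with $Y^*$, because Assumption \ref{assumption::type_1_offspring} makes the first-row coupling positive.

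\emph{Stability.} Exponential stability of $\mathbf{Y}^*$ follows from linearisation: the Jacobian of $A(X_1)\mathbf{X}$ at $\mathbf{Y}^*$ is exactly $\boldsymbol{\theta}$, so stability holds iff $\boldsymbol{\theta}$ is Hurwitz, with the converse direction being standard for exponential stability. For $\mathbf{0}$, the Jacobian is $A(0)$, which has a positive spectral abscissa with a positive (respectively non-negative) Perron eigenvector. So $\mathbf{0}$ is unstable, and trajectories in the orthant leave a neighbourhood of it. To exclude homoclinic orbits I would use the linear functional $\mathbf{w}^T\mathbf{X}$, with $\mathbf{w}$ the positive left Perron vector of $A(z_1)$ for $z_1<\delta$ (by continuity, $\mathbf{w}^TA(z_1)\ge c\,\mathbf{w}^T$ there). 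This gives $\frac{d}{dt}\mathbf{w}^T\mathbf{X}>0$ near $\mathbf{0}$ in the orthant, so no nonzero trajectory can converge to $\mathbf{0}$ as $t\to\infty$. In case (ii) one handles the decoupled $X_1$ coordinate separately, using $X_{2:d}$ and the first-row coupling.
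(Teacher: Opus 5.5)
Apart from the boundedness step, your proposal follows the paper's proof. The paper uses the same arguments: invariance from the non-negative off-diagonal entries of $A$, the equilibria via Perron--Frobenius and the subinvariance theorem in both cases of Assumption~\ref{assumption::irreducibility}, linearisation at $\mathbf{Y}^*$ and $\mathbf{0}$, and the left Perron vector of $A(0)$ (resp.\ $A_{2:d}(0)$) to exclude orbits converging to $\mathbf{0}$.

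For boundedness, the paper uses exactly your fallback. Starting from $\sum_{i\ge2}X_i(0)$ large, it proceeds as follows:
\begin{itemize}
\item it waits a fixed time $1/g_\text{max}$ and uses $\dot X_1\ge\varepsilon g_\text{min}\sum_{i\ge2}X_i-g_\text{max}X_1$ to get $X_1(1/g_\text{max})\ge ny$;
\item it uses $X_1(t)\ge X_1(0)e^{-g_\text{max}t}$ to force a subsequent stay of length at least $\log(n)/g_\text{max}$ above $y$;
\item it picks $n$ with $e^{b/g_\text{max}}n^{-h/g_\text{max}}<\tfrac12$, so $\boldsymbol{\xi}^T\mathbf{X}$ halves over each such excursion.
\end{itemize}
That forced long stay is what your heuristic sketch lacks: a stay above $y$ of unspecified length may contract $\boldsymbol{\xi}^T\mathbf{X}$ by an arbitrarily small factor.

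Your primary Lyapunov route avoids this and works. Set $S=\sum_{i\ge2}X_i$ and $\phi(x)=e^{-\kappa\min(x,y)}$ for any $\kappa>0$. Let $b'$ be a constant with $\boldsymbol{\xi}^TA(X_1)\mathbf{X}\le b'\,\boldsymbol{\xi}^T\mathbf{X}$. For $X_1>y$, $\phi$ is constant and $\dot V\le-qV$. For $X_1<y$,
\[
\dot V\le\phi(X_1)\,\boldsymbol{\xi}^T\mathbf{X}\,\big[b'-\kappa(\varepsilon g_\text{min}S-g_\text{max}y)\big]\le0\qquad\text{whenever } S\ge S_0:=\frac{b'/\kappa+g_\text{max}y}{\varepsilon g_\text{min}}.
\]
So $V$ is non-increasing off the compact set $\mathcal{C}=\{X_1\le y,\,S\le S_0\}$. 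The kink of $\phi$ at $y$ is harmless, since both one-sided expressions for $\dot V$ are non-positive there off $\mathcal{C}$. This gives the explicit bound $\boldsymbol{\xi}^T\mathbf{X}(t)\le e^{\kappa y}\max\{\boldsymbol{\xi}^T\mathbf{X}(0),\max_{\mathcal{C}}\boldsymbol{\xi}^T\mathbf{X}\}$.

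The one thing to get right is $\phi$ above $y$. Constant is the simple choice. A fixed logarithmic slope such as $e^{-\kappa x}$ throughout fails: then $\boldsymbol{\xi}^T\mathbf{X}\,\phi'(X_1)\dot X_1\approx\kappa g_1(X_1)X_1V$ when $X_1$ is large and $S$ small, which the margin $qV$ cannot absorb.

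Comparing the two: your Lyapunov function is shorter and turns local estimates into a uniform global bound automatically, a step the paper leaves implicit. The paper's crossing argument needs more bookkeeping but deliberately mirrors the stochastic proof of Theorem~\ref{theorem::extinction_almost_surely}.

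Two smaller points. First, the vector you want satisfies $H\boldsymbol{\xi}<0$, i.e.\ $\boldsymbol{\xi}^TA(z_1)<0$ for large $z_1$, not $\boldsymbol{\xi}^TH<0$ with $H=\lim A^T$. This is harmless, since $-H$ and $-H^T$ are both nonsingular $M$-matrices.

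Second, and more substantively, in case (ii) your claim that no nonzero trajectory in the orthant converges to $\mathbf{0}$ is false. The axis $\{\mathbf{X}_{2:d}=\mathbf{0}\}$ is invariant with $\dot X_1=-g_1(X_1)X_1$, so it lies in the stable set of $\mathbf{0}$. The left Perron vector of $A_{2:d}(0)$ only rules out trajectories with $\mathbf{X}_{2:d}\neq\mathbf{0}$. What closes the argument, as in the paper, is that on this axis $X_1(t)\to\infty$ as $t\to-\infty$, so these orbits are not homoclinic. The first-row coupling is not needed for this.
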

\begin{proof}

Observing that $\frac{dX_i}{dt}|_{X_i=0, X_{j \neq i} \geq 0} \geq 0$ for each $i=1, \dots, d$, we conclude that the system of ODEs (\ref{eq::FLLN_ODE}) is invariant on the non-negative orthant.

Under Assumption \ref{assumption::limit_theorem_scaled_carrying_capacity}, the inequality (\ref{eq::threshold_y}) yields the existence of a threshold $y$, positive vector $\boldsymbol{\xi} > 0 $ and constant $h > 0$ such that
\begin{align*}
    \boldsymbol{\xi}^T \frac{d \mathbf{X}}{dt} \leq -h  \boldsymbol{\xi}^T \mathbf{X} \text{ provided } X_1 > y.
\end{align*}

Suppose we initialise $\mathbf{X}(0) = \mathbf{Y}$ such that $Y_1 > y$. Set $T_\mathbf{Y} := \inf\{ t \geq 0: X_1(t) \leq y \, | \, \mathbf{X}(0) = \mathbf{Y})$. Then by a variation of Gronwall's inequality \parencite{pham2007variation},
\begin{align}
    \boldsymbol{\xi}^T \mathbf{X}(t) \leq e^{-h t} \boldsymbol{\xi}^T \mathbf{Y}\text{ for all } t \leq T_\mathbf{Y},  \label{eq::ode_bound_1}
\end{align}
whereby the system necessarily returns to the (sub)critical regime, that is,  $\liminf_{t \to \infty} X_1(t) \leq y$.

We now prove that the functional $\boldsymbol{\xi}^T \mathbf{X}$ remains bounded along trajectories. We begin by applying Gronwall's inequality to obtain the bounds
\begin{align}
    \frac{d X_i}{dt} \geq - g_\text{max} X_i & \implies X_i(t) \geq X_i(0) e^{-g_\text{max} t} \text{ for all } t \geq 0, i \in \{1, \dots, d\} \label{eq::ode_bound_2a} \\
    \boldsymbol{\xi}^T \frac{d \mathbf{X}}{dt} \leq b \boldsymbol{\xi}^T \mathbf{X} & \implies \boldsymbol{\xi}^T \mathbf{X}(t) \leq e^{b t} \boldsymbol{\xi}^T \mathbf{X}(0) \text{ for all } t \geq 0  \label{eq::ode_bound_2b}
\end{align}
where the constants $g_\text{max}$ and $b$ are given by Equations (\ref{constant::scaling_gmax}) and (\ref{constant::scaling_b}) respectively.

Under Assumption \ref{assumption::type_1_offspring}, there exists a constant $\varepsilon > 0$ such that
\begin{align}
    \frac{dX_1}{dt} \geq \varepsilon g_\text{min} \sum^d_{i=2} X_i - g_\text{max} X_1 \label{eq::ode_bound_3a}
\end{align}
where the constant $g_\text{min}$ is given by Equation (\ref{constant::scaling_gmin}). Using Equations (\ref{eq::ode_bound_2a}) and (\ref{eq::ode_bound_3a}), it then follows that
\begin{align}
    \frac{d}{dt} \bigg( X_1(t) e^{g_\text{max} t} \bigg) \geq \varepsilon g_\text{min} \bigg( \sum^d_{i=2} X_i(0) \bigg) & \implies X_1(t) \geq \varepsilon g_\text{min} \bigg( \sum^d_{i=2} X_i(0) \bigg) t e^{-g_\text{max} t} \notag \\
    & \implies X_1 \bigg( \frac{1}{g_\text{max}} \bigg) \geq \frac{\varepsilon g_\text{min}}{e g_\text{max}}  \bigg( \sum^d_{i=2} X_i(0) \bigg). \label{eq::ode_bound_3b}
\end{align}

By analogy with the proof of Theorem \ref{theorem::extinction_almost_surely}, we now consider the upcrossings and downcrossings of the deterministic process $\mathbf{X}$ over the type 1 threshold $X_1 = y$. For any constant $n > 1$, suppose we initialise $\mathbf{X}(0) = \mathbf{U}$ with $\sum^d_{i=2} U_i > \frac{n y e g_\text{max}}{\varepsilon g_\text{min}}$. Then Equations (\ref{eq::ode_bound_3b}) and (\ref{eq::ode_bound_2b}) imply that
\begin{align}
    X_1 \bigg( \frac{1}{g_\text{max}} \bigg) \geq ny \qquad \boldsymbol{\xi}^T \mathbf{X}(t) &\leq e^{\frac{b}{g_\text{max}}} \boldsymbol{\xi}^T \mathbf{U}  \text{ for all } t \leq \frac{1}{g_\text{max}}. \label{eq::ode_bound_4}
\end{align}

Using Equations (\ref{eq::ode_bound_1}) and (\ref{eq::ode_bound_2b}), we can then bound
\begin{align}
    T_\mathbf{\mathbf{X}( \frac{1}{g_\text{max}})} \geq \frac{\log(n)}{g_\text{max}} \implies \boldsymbol{\xi}^T \mathbf{X} \bigg( \frac{1}{g_\text{max}} + T_\mathbf{\mathbf{X}( \frac{1}{g_\text{max}})} \bigg) \leq e^{\frac{b}{g_\text{max}}} n^{-\frac{h}{g_\text{max}}} \boldsymbol{\xi}^T \mathbf{U}. \label{eq::ode_bound_5}
\end{align}

Now, choose $n$ sufficiently large such that $e^{\frac{b}{g_\text{max}}} n^{-\frac{h} {g_\text{max}}}  < \frac{1}{2}$. Then Equations (\ref{eq::ode_bound_4}) and (\ref{eq::ode_bound_5}) yield
\begin{align*}
    \boldsymbol{\xi}^T \mathbf{X} \bigg( \frac{1}{g_\text{max}} + T_\mathbf{\mathbf{X}( \frac{1}{g_\text{max}})} \bigg) < \frac{1}{2} \boldsymbol{\xi}^T \mathbf{U}  \text{ with } \boldsymbol{\xi}^T \mathbf{X}( t ) \leq e^{\frac{b \xi_\text{max}}{g_\text{max}}} \boldsymbol{\xi}^T \mathbf{U} \text{ for all } t \leq \frac{1}{g_\text{max}} + T_\mathbf{\mathbf{X}( \frac{1}{g_\text{max}})},
\end{align*}
that is, provided the total initial type $2, \dots, d$ subpopulation $\sum^d_{i=2} X_i(0)$ is sufficiently large, the functional $\boldsymbol{\xi}^T \mathbf{X}(\frac{1}{g_\text{max}} + T_\mathbf{\mathbf{X}( \frac{1}{g_\text{max}})})$ upon the first crossing of the threshold $X_1 = y$ after a fixed delay $\frac{1}{g_\text{max}}$, is strictly smaller than the initial condition $\mathbf{X}(0) \boldsymbol{\xi}$, with a uniform bound over the functional $\mathbf{X}(t) \boldsymbol{\xi} \leq e^{\frac{b \xi_\text{max}}{g_\text{max}}} \mathbf{X}(0) \boldsymbol{\xi}$ in the interval $t \in [0,  \frac{1}{g_\text{max}} + T_\mathbf{\mathbf{X}( \frac{1}{g_\text{max}})}]$. This allows us to conclude that $\boldsymbol{\xi}^T \mathbf{X}$ is necessarily bounded along trajectories of the system (\ref{eq::FLLN_ODE}), whereby the trajectories themselves are bounded.

Next, consider an element-wise non-negative equilibrium solution $\mathbf{X}^* \geq 0$ to the system of ODEs (\ref{eq::FLLN_ODE}). Then $A(X_1^*) \mathbf{X}^* = \boldsymbol{0}$, whereby either $\mathbf{X}^* = \mathbf{0}$ or $\mathbf{X}^*$ is a non-negative right eigenvector of the (singular) matrix $A(X_1^*)$ corresponding to the eigenvalue zero. We consider the two cases under Assumption \ref{assumption::irreducibility}:

\begin{enumerate}
    \item[Case (i)] Since $A(X_1)$ is irreducible for all $X_1 \geq 0$, by the Perron-Frobenius theorem (Theorem 1.5 of \parencite{seneta2006non}) and \parencite{athreya2004branching}, the right eigenvector $\boldsymbol{\nu}(X_1)$ corresponding to the (necessarily real) eigenvalue $\lambda_\text{max}(X_1)$ with largest real part is element-wise positive and the eigenspace associated with $\lambda_\text{max}(X_1)$ is one-dimensional. By the subinvariance theorem (Theorem 1.6 of \parencite{seneta2006non}), there exist no other non-negative right eigenvectors of $A(X_1)$ corresponding to real eigenvalues, barring scalar multiples of $\boldsymbol{\nu}(X_1)$. Under Assumption \ref{assumption::limit_theorem_scaled_carrying_capacity}, $\lambda_\text{max}(X_1)=0$ if and only if $X_1=1$. Consequently, there exists a unique non-zero equilibrium solution $\mathbf{Y}^*$ to the system of ODEs (\ref{eq::FLLN_ODE}), given by the right eigenvector of $A(1)$ corresponding to the eigenvalue zero, normalised to yield $Y_1^*=1$.
    \item[Case (ii)] Since the principal submatrix $A_{2:d}(X_1)$ is irreducible and $A_{2:d,1}(X_1)=0$, for all $X_1 \geq 0$, $\sigma(A(x_1)) = \sigma(A_{2:d}(X_1)) \cup \{A_{1,1}(X_1) \}$ where $\sigma(\cdot)$ denotes the spectrum and $A_{1,1}(X_1) = -g_1(X_1)  < 0$ by assumption. To obtain a non-zero equilibrium solution $\mathbf{Y}^*$ in the non-negative orthant, $A_{2:d}(X_1)$ must possess a non-negative right eigenvector $\boldsymbol{\nu}(X_1)$ corresponding to the eigenvalue zero; by the Perron-Frobenius theorem and subinvariance theorem (Theorems 1.5 and 1.6 of \parencite{seneta2006non}), \parencite{athreya2004branching} and Assumption \ref{assumption::limit_theorem_scaled_carrying_capacity}, this can occur only if the (necessarily real and simple) eigenvalue of $A_{2:d}(X_1)$ with maximal real part is zero, whereby $X_1=1$. We are guaranteed $\boldsymbol{\nu}(1)$ is element-wise positive, while $A_{1,2:d}(1) > 0$ under Assumption \ref{assumption::type_1_offspring}. The unique non-zero equilibrium solution thus takes the form $\mathbf{Y}^* = (1 , 
    g_1(1) \boldsymbol{\nu}(1)/(A_{1,2:d}(1) \cdot \boldsymbol{\nu}(1) )  )$; this is likewise the right eigenvector of $A(1)$ corresponding to the eigenvalue zero, normalised to yield $Y_1^*=1$. 
\end{enumerate}

The Jacobian of the system, evaluated at the equilibrium point $\mathbf{X}^*$, takes the form
\begin{align*}
    J( \mathbf{X}^*) = A(X_1^*) +  A'(X_1^*) \cdot \mathbf{X}^* \cdot (\mathbf{e}_1)^T;
\end{align*}
$\mathbf{X}^*$ is exponentially stable iff $J(\mathbf{X}^*)$ is Hurwitz stable. 

Under Assumption \ref{assumption::limit_theorem_scaled_carrying_capacity}, $J( \mathbf{0}) = A(0)$ has a strictly positive spectral abscissa, whereby, $\mathbf{X}^* = \mathbf{0}$ is asymptotically unstable. To show that there are no homoclinic orbits in the non-negative orthant associated with this fixed point, we consider the two cases under Assumption \ref{assumption::irreducibility}:
\begin{enumerate}
    \item[Case (i)] By the Perron-Frobenius theorem (Theorem 1.5 of \parencite{seneta2006non}), \parencite{athreya2004branching} and Assumption \ref{assumption::limit_theorem_scaled_carrying_capacity}, there exists $\delta > 0$ such that
    \begin{align*}
        \boldsymbol{\zeta}^T A(x_1) \geq \frac{\lambda_\text{max}(0)}{2} \boldsymbol{\zeta} \text{ for all } 0 \leq x_1 \leq \delta
    \end{align*}
    where $\boldsymbol{\zeta} > 0$ is the left eigenvector of the (irreducible) matrix $A(0)$ corresponding to the dominant eigenvalue $\lambda_\text{max}(0) > 0$. Therefore,
    \begin{align}
        \frac{d (\boldsymbol{\zeta}^T \mathbf{X})}{dt} \geq \frac{\lambda_\text{max}(0)}{2} (\boldsymbol{\zeta}^T \mathbf{X}) \text{ provided } 0 \leq X_1 \leq \delta. \label{eq::ode_bound_delta}
    \end{align}
    Now, suppose a point $\mathbf{Z} \in (\mathbbm{R}_{\geq 0})^d$ lies on the stable manifold for $\mathbf{0}$. Suppose we initialise $\mathbf{X}(0) = \mathbf{Z}$. Then there exists $W \geq 0$ such that $0 \leq X_1(t) \leq \delta$ for all $t \geq W$. By Equation (\ref{eq::ode_bound_2a}), $\boldsymbol{\zeta}^T \mathbf{X}(W) \geq e^{-g_\text{max} W} \boldsymbol{\zeta}^T \mathbf{Z}$. Then Gronwall's inequality and Equation (\ref{eq::ode_bound_delta}) yield the bound
    \begin{align*}
        \boldsymbol{\zeta}^T \mathbf{X}(W + t) \geq e^{\frac{\lambda_\text{max}(0)}{2} t - g_\text{max} W} \mathbf{Z} \to \infty \text{ in the limit } t \to \infty,
    \end{align*}
    which is a contradiction. Therefore, the intersection of the stable manifold of $\mathbf{0}$ with the non-negative orthant is empty. This excludes the existence of homoclinic orbits corresponding to $\mathbf{0}$ in the non-negative orthant.
    \item[Case (ii)] It is straightforward to show that the stable manifold for $\mathbf{0}$ includes the line $S_1 := \mathbbm{R}_{\geq 0} \times \{ 0\}^{d-1}$. To show that the intersection of the stable manifold with the non-negative orthant is restricted to $S_1$, we follow analogous reasoning to Case (i). Using the Perron-Frobenius theorem (Theorem 1.5 of \parencite{seneta2006non}), \parencite{athreya2004branching} and Assumption \ref{assumption::limit_theorem_scaled_carrying_capacity}, there exists $\delta' > 0$ such that
    \begin{align*}
        (\boldsymbol{\zeta}_{2:d})^T A_{2:d}(x_1) \geq \frac{\lambda_\text{max}(0)}{2} \boldsymbol{\zeta}_{2:d} \text{ for all } 0 \leq x_1 \leq \delta
    \end{align*}
    where $\boldsymbol{\zeta}_{2:d} > 0$ is the left eigenvector of the (irreducible) matrix $A_{2:d}(0)$ corresponding to the dominant eigenvalue $\lambda_\text{max}(0) > 0$. If a point $\mathbf{Z} \in (\mathbbm{R}_{\geq 0})^d \setminus S_1$ lies on the stable manifold for $\mathbf{0}$, then an identical argument yields the contradiction  $(\boldsymbol{\zeta}_{2:d})^T \mathbf{X}_{2:d}(t) \to \infty$ in the limit $t \to \infty$. The intersection of the stable manifold of $\mathbf{0}$ with the non-negative orthant is thus restricted to $S_1$. Provided $\mathbf{X}(0) \in S_1$, we can readily verify that $\lim_{t \to -\infty} \mathbf{X}_1(t) \to \infty$, whereby $S_1$ does not intersect with the unstable manifold of $\mathbf{0}$; this then precludes the existence of a homoclinic orbit associated with $\mathbf{0}$ in the non-negative orthant.
\end{enumerate}
\end{proof}

Given $\mathbf{X}(0) = \lim_{K \to \infty} \mathbf{x}^K(0)/K = \mathbf{Y}^*$, the limiting process $\mathbf{V}$ (Equation (\ref{eq::CLT_SDE})) derived under the CLT \parencite{ethier2009markov} reduces to an Ornstein-Uhlenbeck process, with constant drift matrix $\boldsymbol{\theta}$ (Equation (\ref{eq::theta_Jacobian})) and local covariance matrix
\begin{align}
    \boldsymbol{\Upsilon} = \sum^d_{j=1} g_j(1) Y^*_j  \mathbf{C}_j(1)  + G_{\mathbf{Y}^*} \mathbf{Y}^* \label{eq::S_local_covariance}
\end{align}
where $\mathbf{C}_j(1) = ( c_{j, i_1, i_2}(1))_{1 \leq i_1, i_d \leq d}$ and $G_{\mathbf{Y}^*} = \text{diag}\{ g_1(1) Y_1^*, \dots, g_d(1) Y_d^* \}$. By \parencite{gardiner1985handbook}, if $\boldsymbol{\theta}$ is Hurwitz stable (whereby the fixed point $\mathbf{Y}^*$ is exponentially stable), this Ornstein-Uhlenbeck process is mean-reverting, with stationary solution 
\begin{align}
    \mathbf{V}^* \sim \text{Normal}(\mathbf{0}, \mathbf{\Sigma}) \text{ where } \mathbf{\Sigma} \text{ uniquely solves } \boldsymbol{\theta} \mathbf{\Sigma} + \mathbf{\Sigma} \boldsymbol{\theta}^T = -\boldsymbol{\Upsilon}. \label{eq::sigma_lyapunov}
\end{align}

\subsubsection{Asymptotic stability}

We now address the asymptotic stability of the non-zero equilibrium $\mathbf{Y}^*$ of the system of ODEs (\ref{eq::FLLN_ODE}). We begin by observing that $-A(1)$ is necessarily a singular $M$-matrix (under Assumption \ref{assumption::limit_theorem_scaled_carrying_capacity}, $A(1)$ has non-negative off-diagonal entries and spectral abscissa zero; under the irreducibility Assumption \ref{assumption::irreducibility}, we can apply the Perron-Frobenius theorem \parencite{athreya2004branching, seneta2006non} to show that $A(1)$ is necessarily singular if it has spectral abscissa zero, and then that zero is an algebraically simple eigenvalue of $A(1)$). Therefore, under Assumptions \ref{assumption::type_1_carrying_capacity} and \ref{assumption::irreducibility}, the negated Jacobian $-\boldsymbol{\theta}$ evaluated at $\mathbf{Y}^*$ (Equation (\ref{eq::theta_Jacobian})) constitutes a rank 1 perturbation of a singular $M$-matrix  \parencite{bierkens2014singular}. If $A(x)$ is component-wise analytic in some neighbourhood of $x=1$ and the dominant eigenvalue $\lambda_\text{max}(x)$ of $A(x)$ is strictly decreasing at $x=1$, that is, $\frac{d \lambda_\text{max}}{dx}|_{x=1} < 0$, then we can apply Theorem 5 of \parencite{lancaster1964eigenvalues} and Lemma 1.4 of \parencite{bierkens2014singular} to conclude that $\boldsymbol{\theta}$ is non-singular.

As a special case, suppose that $\frac{d \lambda_\text{max}}{dx}|_{x=1} < 0$ and the rank 1 perturbation $A'(1) \mathbf{Y}^* \leq 0$ (that is, the rate of production of each type, evaluated at the non-zero equilibrium $\mathbf{Y}^*$, has a non-positive derivative with respect to the type 1 subpopulation size). Then Lemmas 1.4, 2.10 and 2.18 of \parencite{bierkens2014singular} tell us that $-\boldsymbol{\theta}$ is a $P_0$ matrix (that is, possesses non-negative principal minors), and that all \textit{real} eigenvalues of the Jacobian $\boldsymbol{\theta}$ are strictly negative. In the case $d=2$, direct application of the Routh-Hurwitz criterion (as in Theorem 2.7(i) of \parencite{bierkens2014singular}) yields that this is a sufficient condition for asymptotic stability of the non-zero fixed point $\mathbf{Y}^*$. However, asymptotic stability cannot be established in generality for the case $d \geq 3$ without additional assumptions on the structure of the Jacobian $\boldsymbol{\theta}$ (such as those in Theorem 2.7 of \parencite{bierkens2014singular}).

\subsubsection{On the planar case $d=2$}

In the planar case $d=2$, we can apply Theorem \ref{theorem::FLLN_equilibrium} to characterise the limiting behaviour of the system of ODEs (\ref{eq::FLLN_ODE}) based on the classification of the (unique) non-zero equilibrium point $\mathbf{Y}^*$. As per Definition 8.1.1 of \parencite{wiggins2003introduction}, we define the $\omega$-limit set $\omega(\mathbf{X}_0)$ associated with $\mathbf{X}_0 \in (\mathbbm{R}_{\geq 0})^d$ as follows: $\mathbf{Z} \in \omega(\mathbf{X}_0)$ if, provided $\mathbf{X}(0) = \mathbf{X}_0$, there exists a sequence $\{ t_n \}_{n \in \mathbbm{N}}$ with $t_n \to \infty$ such that $\mathbf{X}(t_n) \to \mathbf{Z}$ in the limit $n \to \infty$.

\begin{theorem} \label{theorem::planar_FLLN_limiting_behaviour}
Suppose $d=2$ and Assumptions \ref{assumption::limit_theorem_scaled_carrying_capacity}, \ref{assumption::type_1_offspring}, \ref{assumption:density_dependent_process} and \ref{assumption::irreducibility} hold. Then for the system of ODEs (\ref{eq::FLLN_ODE}), for any $\mathbf{X}_0 \in \mathbbm{R}_{\geq 0} \times \mathbbm{R}_{>0}$:
\begin{itemize}
    \item If the (unique) fixed point $\mathbf{Y}^* > 0$ is a sink, source or centre, then the $\omega$-limit set $\omega(\mathbf{X}_0)$ is either $\{ \mathbf{Y}^* \}$ or a periodic orbit enclosing $\mathbf{Y}^*$.
    \item If the (unique) fixed point $\mathbf{Y}^* > 0$ is a hyperbolic saddle, then the $\omega$-limit set $\omega(\mathbf{X}_0)$ is either $\{ \mathbf{Y}^* \}$ or the union of $\{ \mathbf{Y}^* \}$ and associated homoclinic orbits.
\end{itemize}
\end{theorem}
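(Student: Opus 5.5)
The plan is to apply the Poincaré--Bendixson theorem (in the generalised form allowing $\omega$-limit sets that contain equilibria, e.g.\ Theorem 9.0.6 of \parencite{wiggins2003introduction}) on the closed non-negative quadrant, using Theorem \ref{theorem::FLLN_equilibrium} for three facts: trajectories are bounded and stay in $(\mathbbm{R}_{\geq 0})^2$; the only equilibria there are $\mathbf{0}$ and $\mathbf{Y}^*$; and $\mathbf{0}$ has no homoclinic orbits in the quadrant.

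First, fix $\mathbf{X}_0 \in \mathbbm{R}_{\geq 0} \times \mathbbm{R}_{>0}$. Forward invariance and boundedness make the forward orbit precompact, so $\omega(\mathbf{X}_0)$ is non-empty, compact, connected and invariant. Since the vector field is $C^1$ (Assumption \ref{assumption:density_dependent_process}) and the phase space is planar, Poincaré--Bendixson applies. Because there are only two equilibria, $\omega(\mathbf{X}_0)$ must be one of: a single equilibrium; a periodic orbit; or a union of equilibria and orbits connecting them (homoclinic or heteroclinic).

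Second, I will rule out $\mathbf{0}$ from $\omega(\mathbf{X}_0)$. In Case (i) of Assumption \ref{assumption::irreducibility}, the stable manifold of $\mathbf{0}$ does not meet the quadrant, so $\omega(\mathbf{X}_0)\neq\{\mathbf{0}\}$. If $\mathbf{0}$ lay in a larger $\omega$-limit set, that set would contain a non-equilibrium orbit with $\alpha$- or $\omega$-limit $\mathbf{0}$ lying in the quadrant. The $\omega$-limit option is impossible for the same stable-manifold reason. For the $\alpha$-limit option, I will use the Lyapunov-type bound (\ref{eq::ode_bound_delta}): near $\mathbf{0}$ the quantity $\boldsymbol{\zeta}^T\mathbf{X}$ grows at rate $\lambda_{\max}(0)/2$, so the trajectory leaves every small neighbourhood of $\mathbf{0}$ and cannot return to accumulate there. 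More precisely, near $\mathbf{0}$ the trajectory can only approach along the stable manifold, which is empty in the quadrant. In Case (ii), the stable manifold within the quadrant is $S_1$. I first check that $X_2(t)>0$ for all $t$ when $X_2(0)>0$; this holds because $dX_2/dt \geq -g_{\max}X_2$, by (\ref{eq::ode_bound_2a}). The flow on $S_1$ is $X_1 \to 0$ with $X_1\to\infty$ in backward time, so $S_1$ carries no bounded full orbit other than $\mathbf{0}$. Hence no orbit in $\omega(\mathbf{X}_0)$ can connect to $\mathbf{0}$ through $S_1$, and the same repulsion argument via $\boldsymbol{\zeta}_{2:d}$ excludes $\mathbf{0}$. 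Heteroclinic connections between $\mathbf{0}$ and $\mathbf{Y}^*$ are excluded for the same reasons, so only $\mathbf{Y}^*$ remains as a possible equilibrium in $\omega(\mathbf{X}_0)$.

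Third, I split according to the type of $\mathbf{Y}^*$. If $\mathbf{Y}^*$ is a sink, source or centre, its linearisation admits no stable/unstable separatrices, so no homoclinic orbit to $\mathbf{Y}^*$ exists. For a source I note that a source cannot lie in any $\omega$-limit set unless $\mathbf{X}_0=\mathbf{Y}^*$; this is consistent with the statement's first option. So $\omega(\mathbf{X}_0)$ is $\{\mathbf{Y}^*\}$ or a periodic orbit. Any periodic orbit bounds a disc, and by index theory that disc contains an equilibrium of index $+1$. It cannot contain $\mathbf{0}$, which lies on the boundary of the invariant quadrant, so the orbit encloses $\mathbf{Y}^*$. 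If $\mathbf{Y}^*$ is a hyperbolic saddle, a periodic orbit would have to enclose an index-$+1$ equilibrium, but the saddle has index $-1$ and $\mathbf{0}$ is excluded. The remaining options are therefore $\{\mathbf{Y}^*\}$ itself or $\{\mathbf{Y}^*\}$ together with homoclinic orbits.

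The main obstacle is the second step: carefully excluding $\mathbf{0}$ (especially in Case (ii), where $\mathbf{0}$ is non-hyperbolic in the direction of $S_1$) from being part of a cycle of connecting orbits inside $\omega(\mathbf{X}_0)$. The local repulsion inequality near $\mathbf{0}$ is the tool I would try first. A secondary point is justifying the claim "a periodic orbit encloses $\mathbf{Y}^*$, not $\mathbf{0}$", since $\mathbf{0}$ sits on the boundary of the invariant region.
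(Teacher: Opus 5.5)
Your proposal is correct and follows essentially the same route as the paper: Theorem~\ref{theorem::FLLN_equilibrium} plus Poincar\'e--Bendixson, index theory to force any periodic orbit to enclose $\mathbf{Y}^*$ (and to exclude periodic orbits at a saddle), and the absence of separatrices at a sink, source or centre to exclude homoclinic orbits. The only difference is that the paper disposes of $\mathbf{0}$ by citing a strengthened Poincar\'e--Bendixson theorem (Theorem 2.7 of Ramazi et al.), whereas you do it by hand; for Case (ii), note that when $d=2$ the origin is in fact a hyperbolic saddle (eigenvalues $-g_1(0)<0<\lambda_\text{max}(0)$), so the Butler--McGehee lemma forces a point of $S_1\setminus\{\mathbf{0}\}$ into $\omega(\mathbf{X}_0)$, which your backward-unboundedness observation rules out---this, rather than the $X_2$-repulsion bound alone (which does not stop $X_2$ shrinking while $X_1>1$), is what closes that step.
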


\begin{proof}
By Theorem \ref{theorem::FLLN_equilibrium}, the system (\ref{eq::FLLN_ODE}) possesses bounded trajectories and is invariant on the non-negative orthant, with a single fixed point $\mathbf{0}$ on the boundary and a unique fixed point $\mathbf{Y}^*$ in the interior of the non-negative orthant. Further, the intersection of the stable manifold of $\mathbf{0}$ with the non-negative orthant is restricted to the line $\mathbbm{R}_{\geq 0} \times \{ 0 \}$. Using a stronger form of the Poincare-Benidixson theorem (Theorem 2.7 of \textcite{ramazi2024tightening}), we may thus conclude that  that the $\omega$-limit set for any point $\mathbf{X}_0 \in \mathbbm{R}_{\geq 0} \times \mathbbm{R}_{> 0}$ must either be $\{ \mathbf{Y}^* \}$; a periodic orbit enclosing $\mathbf{Y}^*$; or the union of $\{ \mathbf{Y}^* \}$ and associated homoclinic orbits. By index theory (Corollary 6.0.2 of \parencite{wiggins2003introduction}), a periodic orbit can only enclose the (unique) interior fixed point $\mathbf{Y}^*$ if it is a sink, source or centre; periodic orbits are thus disallowed if $\mathbf{Y}^*$ is a hyperbolic saddle. In contrast, homoclinic orbits corresponding to $\mathbf{Y}^*$ can exist only if $\mathbf{Y}^*$ is a hyperbolic saddle: by the stable manifold theorem (Theorem 3.2.1 of \parencite{wiggins2003introduction}), if $\mathbf{Y}^*$ is a sink (source), then there is no corresponding unstable (stable) manifold; if $\mathbf{Y}^*$ is a center, then it possesses neither a stable nor unstable manifold.
\end{proof}

From Theorem \ref{theorem::planar_FLLN_limiting_behaviour}, it is immediate that a sufficient condition for the system  (\ref{eq::FLLN_ODE}) to exhibit periodic orbits in the case $d=2$ is that the (unique) fixed point $\mathbf{Y}^* > 0$ is a source: for any $\mathbf{X}_0 \in (\mathbbm{R}_{\geq 0} \times \mathbbm{R}_{>0}) \setminus \{ \mathbf{Y}^* \}$, the corresponding omega limit set $\omega(\mathbf{X}_0)$ is necessarily a periodic orbit enclosing $\mathbf{Y}^*$. In Example \ref{example::2D_limit_cycle}, we show via subcritical Hopf bifurcation that periodic orbits may still exist when the (unique) fixed point $\mathbf{Y}^* > 0$ is a sink. 

Under the assumption $\frac{d\lambda_\text{max}}{dx}|_{x=1} < 0$ (with $A(x)$ component-wise analytic in some neighbourhood of $x=1$) and the element-wise monotonicity condition $A'(x) \leq 0$ --- which includes the simplified setting of fixed death rates $g_i$ and mean offspring batch sizes $m_{i,j}(x_1)$ that decrease monotonically as a function of the scaled type 1 subpopulation size $x_1$ --- we can apply Dulac's criterion (Theorem 4.1.2 of \parencite{wiggins2003introduction}) to show that the non-zero fixed point $\mathbf{Y}^*$ is a sink with domain of attraction $\mathcal{D} \supseteq \mathbbm{R} \times \mathbbm{R}_{>0}$ (Corollary \ref{corollary::FLLN_limiting_behaviour_simple}); in this simplified setting, we can thus expect metastable behaviour to emerge if a type 2 subpopulation size of order $\mathcal{O}(K)$ is attained prior to extinction, in direct analogy to the single-type case.

\begin{corollary} \label{corollary::FLLN_limiting_behaviour_simple}
Suppose $d=2$ and Assumptions \ref{assumption::limit_theorem_scaled_carrying_capacity}, \ref{assumption::type_1_offspring}, \ref{assumption:density_dependent_process} and \ref{assumption::irreducibility} hold. If $\frac{d \lambda_\text{max}}{dx}|_{x=1} < 0$ (with $A(x)$ component-wise analytic in some neighbourhood of $x=1$) and $A'(x) \leq 0$ element-wise for all $x \geq 0$, then the (unique) fixed point $\mathbf{Y}^* > 0$ of the system (\ref{eq::FLLN_ODE}) is asymptotically stable with domain of attraction $\mathcal{D} \supseteq \mathbbm{R}_{\geq 0} \times \mathbbm{R}_{>0}$.
\end{corollary}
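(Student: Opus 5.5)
The plan is to combine Theorem \ref{theorem::planar_FLLN_limiting_behaviour} with two facts: the fixed point $\mathbf{Y}^*$ is a sink, and periodic orbits are excluded by Dulac's criterion.

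\emph{Step 1: $\mathbf{Y}^*$ is a sink.} Since $A'(x)\le 0$ element-wise and $\mathbf{Y}^*>0$, the rank one perturbation satisfies $A'(1)\mathbf{Y}^*\le 0$. Together with $\frac{d\lambda_\text{max}}{dx}|_{x=1}<0$ and analyticity, the discussion preceding Theorem \ref{theorem::planar_FLLN_limiting_behaviour} (Lemmas 1.4, 2.10, 2.18 and Theorem 2.7(i) of \parencite{bierkens2014singular}) gives the following: $\boldsymbol{\theta}$ is non-singular, $-\boldsymbol{\theta}$ is a $P_0$ matrix, and for $d=2$ the Routh--Hurwitz conditions hold. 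To make the last claim explicit, I would check directly that $\operatorname{tr}\boldsymbol{\theta}<0$ and $\det\boldsymbol{\theta}>0$.
\begin{itemize}
\item $\det\boldsymbol{\theta}\ge 0$ follows from the $P_0$ property, and non-singularity makes it strict.
\item $\operatorname{tr}\boldsymbol{\theta}=\operatorname{tr}A(1)+A'_{11}(1)Y^*_1+A'_{21}(1)\cdot 0$. Only the first column of $A'(1)\mathbf{Y}^*(\mathbf{e}_1)^T$ is nonzero, so only its $(1,1)$ entry $A'_{11}(1)Y_1^*$ enters the trace. This entry is $\le0$.
\item $\operatorname{tr}A(1)<0$, because $A(1)$ has eigenvalues $0$ and one strictly negative eigenvalue (zero is simple by Perron--Frobenius, and the spectral abscissa is $0$).
\end{itemize}
Hence $\mathbf{Y}^*$ is a hyperbolic sink, so it is exponentially stable by Theorem \ref{theorem::FLLN_equilibrium}.

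\emph{Step 2: no periodic orbits in the open quadrant.} I would apply Dulac's criterion with weight $B(\mathbf{X})=1/(X_1X_2)$ on $\mathbbm{R}_{>0}^2$, which is the step I expect to be the main obstacle. Writing $F_i=\sum_j A_{ij}(X_1)X_j$, the divergence is
\begin{align*}
\partial_{X_1}\Big(\frac{F_1}{X_1X_2}\Big)+\partial_{X_2}\Big(\frac{F_2}{X_1X_2}\Big)
=\frac{A_{11}'(X_1)}{X_2}-\frac{A_{12}(X_1)}{X_1^2}+\frac{A_{21}'(X_1)}{X_2}-\frac{A_{21}(X_1)}{X_2^2}.
\end{align*}
The $A_{11}$ term divided by $X_2$ is independent of $X_1$ when differentiated, so it contributes only through $A'_{11}$; likewise the $A_{22}$ term contributes nothing. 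Each remaining term is $\le 0$, since $A'\le0$ and the off-diagonal entries satisfy $A_{12},A_{21}\ge0$. Moreover $A_{12}(X_1)\ge\varepsilon g_\text{min}>0$ by Assumption \ref{assumption::type_1_offspring}, so the divergence is strictly negative on the open quadrant. Hence there are no periodic orbits lying in $\mathbbm{R}_{>0}^2$. Any periodic orbit must enclose $\mathbf{Y}^*$ and cannot touch the invariant boundary, so this excludes periodic orbits entirely. The same Dulac function also excludes homoclinic loops at $\mathbf{Y}^*$, although these are already irrelevant because $\mathbf{Y}^*$ is a sink and not a saddle.

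\emph{Step 3: conclusion.} For $\mathbf{X}_0\in\mathbbm{R}_{\ge0}\times\mathbbm{R}_{>0}$, Theorem \ref{theorem::planar_FLLN_limiting_behaviour} says $\omega(\mathbf{X}_0)$ is either $\{\mathbf{Y}^*\}$ or a periodic orbit enclosing $\mathbf{Y}^*$. Step 2 excludes the latter, so $\omega(\mathbf{X}_0)=\{\mathbf{Y}^*\}$ and the trajectory converges to $\mathbf{Y}^*$. Starting points with $X_1=0$ enter the open quadrant immediately, because $\dot X_1\ge\varepsilon g_\text{min}X_2>0$, so Dulac applies to their forward orbits as well. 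Combined with the local stability from Step 1, this gives asymptotic stability of $\mathbf{Y}^*$ with domain of attraction $\mathcal{D}\supseteq\mathbbm{R}_{\ge0}\times\mathbbm{R}_{>0}$.
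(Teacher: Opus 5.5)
Your proposal is correct and follows the paper's proof essentially step for step. The sink property comes from $A'(1)\mathbf{Y}^*\le 0$ and $\frac{d\lambda_\text{max}}{dx}|_{x=1}<0$ via the Lancaster/Bierkens results (your explicit trace--determinant check is just the $d=2$ Routh--Hurwitz step behind Theorem 2.7(i) of Bierkens), then Dulac's criterion with weight $1/(X_1X_2)$ using $A_{12}\ge\varepsilon g_\text{min}$, then Theorem~\ref{theorem::planar_FLLN_limiting_behaviour}. There are two harmless slips, neither affecting any sign conclusion since $A'\le 0$ and $\mathbf{Y}^*>0$: the $(1,1)$ entry of $A'(1)\mathbf{Y}^*(\mathbf{e}_1)^T$ is $A'_{11}(1)Y^*_1+A'_{12}(1)Y^*_2$ rather than $A'_{11}(1)Y^*_1$, and the divergence should contain $A'_{12}(X_1)/X_1$ (from differentiating $A_{12}(X_1)/X_1$ in $X_1$) rather than $A'_{21}(X_1)/X_2$ (the term $A_{21}(X_1)/X_2$ is differentiated in $X_2$ only).
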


\begin{proof}
Since $\frac{d \lambda_\text{max}}{dx}|_{x=1} < 0$ and $A'(1) \mathbf{Y}^* \leq 0$ by assumption, we can apply Theorem 5 of \parencite{lancaster1964eigenvalues} and Theorem 2.7(i) of \parencite{bierkens2014singular} to show that the unique non-zero equilibrium point $\mathbf{Y}^*$ is an asymptotically stable sink. Under Assumption \ref{assumption::type_1_offspring} and the element-wise monotonicity condition $A'(x) \leq 0$, we can also bound
\begin{align*}
    \nabla \cdot \bigg( \frac{A(X_1) \mathbf{X}}{X_1 X_2} \bigg) = & \frac{A_{11}'(X_1)}{X_2} + \frac{A_{12}'(X_1)}{X_1} - \frac{A_{12}(X_1)}{X_1^2} - \frac{A_{21}(X_1)}{X_2^2}   \leq -\frac{\varepsilon g_\text{min}}{X_1^2} < 0
\end{align*}
where the constant $g_\text{min}$ is given by Equation (\ref{constant::scaling_gmin}). By Dulac's criterion (Theorem 4.1.2 of \parencite{wiggins2003introduction}), there are no closed orbits within the domain $(\mathbbm{R}_{> 0})^2$. The claim follows from Theorem \ref{theorem::planar_FLLN_limiting_behaviour}.
\end{proof}

\subsubsection{On the case $d \geq 3$}

In the case $d \geq 3$, we can show by counter-example that the element-wise monotonicity condition $A'(x) \leq 0$ is neither sufficient to establish asymptotic stability of the fixed point $\mathbf{Y}^* > 0$, nor rule out periodic orbits.

To this end, consider an analytic family of vector fields of the form $A_{\alpha}(x) = \alpha F(x) + A$, such that $F(1) = 0$, satisfying Assumptions \ref{assumption::type_1_carrying_capacity}, \ref{assumption::type_1_offspring} and \ref{assumption::irreducibility}. Then the family of systems  $\frac{d\mathbf{X}}{dt} = A_\alpha(X_1) \mathbf{X}$ shares a unique non-zero equilibrium point $\mathbf{Y}^*$ in the non-negative orthant, with corresponding Jacobian $\boldsymbol{\theta}_\alpha = A + \alpha F'(1) \cdot \mathbf{Y}^* \cdot (\mathbf{e}_1)^T$. Given the dominant eigenvalue of $A_\alpha(x)$ is strictly decreasing at $x=1$ and $F'(1) \leq 0$ element-wise, we can conclude that $\mathbf{Y}^*$ is asymptotically stable (equivalently, $\boldsymbol{\theta}_\alpha$ is Hurwitz stable) for sufficiently small $\alpha$: Lemma 2.11 of \parencite{bierkens2014singular} yields the existence of a constant $\alpha_0 > 0$ such that $\mathbf{Y}^*$ is asymptotically stable for all $0 < \alpha \leq \alpha_0$. However, if there exists $\alpha_1>0$ such that $\mathbf{Y}^*$ is asymptotically unstable for $\alpha= \alpha_1$, then Theorem 5.1 of \parencite{vassena2025mass} yields the existence of a constant $\alpha^*$ such that the system $\frac{d \mathbf{X}}{dt} = A_{\alpha^*}(X_1) \mathbf{X}$ exhibits periodic orbits. A system that appears to exhibit a stable limit cycle in the case $d=3$, with fixed individual death rates and monotonicity in mean offspring batch sizes, is presented numerically in Example \ref{example::3D_limit_cycle}.

\section{On the emergence of metastable behaviour} \label{sec::metastable_behaviour}

We are principally interested in two metastable behaviours that can emerge under our notion of a type 1 carrying capacity: Gaussian fluctuation around a non-zero state, which is expected to occur once the process reaches the domain of attraction of an exponentially stable non-zero equilibrium of the FLLN (\ref{eq::FLLN_ODE}) (if it exists) \parencite{barbour1976quasi, barbour2012total, prodhomme2023strong}; or persistence in the vicinity of a stable limit cycle of the FLLN (\ref{eq::FLLN_ODE}) (when it exists) \parencite{bressloff2020phase}. Hereafter, we additionally enforce the assumption that the offspring batch sizes have uniformly bounded support (Assumption \ref{assumption::bounded_batch_size}).

\subsection{Metastability in the vicinity of an exponentially stable equilibrium} \label{sec::metastability_sink}

Suppose that the non-zero equilibrium $\mathbf{Y}^*$ of the FLLN (\ref{eq::FLLN_ODE}) is asymptotically stable. We would then expect that $\mathbf{x}^K(t)/K$ will persist in the vicinity of $\mathbf{Y}^*$ over an exponential time scale: given $\mathbf{x}^K(0)/K = \mathbf{Y}^*$, Proposition 3.7 of \textcite{prodhomme2023strong} stipulates that large deviations of order $\mathcal{O}(1)$ between the scaled process $\mathbf{x}^K/K$ and the fixed point $\mathbf{Y}^*$ are expected to occur on time scales of order $\mathcal{O}(\exp(\beta' K))$ (c.f. Theorem 6 of \parencite{jagers2011population} in the single-type case with a soft carrying capacity).

More precise results can also be attained. Denote by $\mathcal{U}$ the domain of attraction of $\mathbf{Y}^*$ for the system (\ref{eq::FLLN_ODE}). Given $\mathbf{x}^K(0)/K \in \mathcal{U}$, Theorem 1.1 of \textcite{prodhomme2023strong} tells us that the Gaussian approximation $\mathbf{X} + \mathbf{V}/\sqrt{K}$ to the scaled process $\mathbf{x}/K$ retains precision $\alpha \log(K)/K$ on polynomial time scales $\mathcal{O}(K^{c_\alpha})$ for sufficiently large $\alpha$; further, given a function $\Delta: \mathbbm{R}_{>0} \to \mathbbm{R}_{>0}$ such that $K^{-\frac{1}{2}} \leq \Delta(K) \ll 1$,  moderate deviations of order $\Delta(K)$ between the scaled process $\mathbf{x}^K/K$ and the approximating process $\mathbf{X} + \mathbf{V}/ \sqrt{K}$ are expected to emerge on subexponential time scales of order $\mathcal{O}(\exp(\beta K \Delta(K)))$. In contrast, given an arbitrary initial condition $\mathbf{x}^K(0)/K$ and several additional technical conditions, Theorem 4.4 of \textcite{kurtz1978strong} yields the weaker result that the approximation $\mathbf{X} + \mathbf{V}/\sqrt{K}$ maintains precision $\alpha \log(K)/K$ (for sufficiently large $\alpha$) on fixed time scales only.

\begin{theorem}[Based on Theorem 1.1 of \textcite{prodhomme2023strong}] \label{theorem::moderate_deviations}
Suppose that Assumptions \ref{assumption::bounded_batch_size} and \ref{assumption:density_dependent_process} hold, and that the Jacobian $\boldsymbol{\theta}$ (Equation (\ref{eq::theta_Jacobian})) evaluated at the fixed point $\mathbf{Y}^*$ is Hurwitz stable. Let $\mathcal{D} \subseteq \mathcal{U}$ be a compact subset of the domain of attraction of $\mathbf{Y}^*$ for the system (\ref{eq::FLLN_ODE}). There exist constants $C, \beta, \alpha > 0$ such that, for any initial condition of the form $\lim_{K \to \infty} \mathbf{x}^K(0)/K = \mathbf{X}(0) \in \mathcal{D}$ and for every $\Delta: \mathbbm{R}_{>0} \to \mathbbm{R}_{>0}$ satisfying $\alpha \log(K)/K \leq \Delta(K) \ll 1$, we can construct a coupling under which, for sufficiently large $K$,
\begin{align*}
    \mathbbm{P} \Bigg( \sup_{0 \leq t \leq T} \bigg\lVert \frac{\mathbf{x}^K(t)}{K} - \mathbf{X}(t) - \frac{\mathbf{V}(t)}{\sqrt{K}} \bigg\rVert_2 > \Delta(K) \Bigg) \leq C (T+1) \exp ( -\beta K \Delta(K)) \text{ for all } T \geq 0.
\end{align*}
\end{theorem}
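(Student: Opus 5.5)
The plan is to verify that the density-dependent process $\mathbf{x}^K$ satisfies the hypotheses of Theorem 1.1 of \textcite{prodhomme2023strong}, and then to read off the stated bound directly. That framework concerns a sequence of density-dependent Markov jump processes with jump vectors in a fixed finite set. The jump rates are $K\beta_{\boldsymbol{\ell}}(\mathbf{x}/K)$, and the drift $F(\mathbf{X}) = \sum_{\boldsymbol{\ell}} \boldsymbol{\ell}\,\beta_{\boldsymbol{\ell}}(\mathbf{X})$ is required to have an exponentially stable equilibrium whose basin of attraction contains the compact set of initial conditions.

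First I identify the ingredients. Under Assumption \ref{assumption::bounded_batch_size}, the set of jump vectors $\{\boldsymbol{\ell}-\mathbf{e}_i : i\le d,\ \boldsymbol{\ell}\in\mathcal{J}\}$ is finite, since $\mathcal{J}\subseteq[0,\Xi/d]^d$. The rate functions are $\beta_{(i,\boldsymbol{\ell})}(\mathbf{X}) = g_i(X_1)p_{(i,\boldsymbol{\ell})}(X_1)X_i$, and the corresponding drift is $F(\mathbf{X}) = A(X_1)\mathbf{X}$, which is exactly the FLLN field (\ref{eq::FLLN_ODE}). By Assumption \ref{assumption:density_dependent_process}, each $\beta$ is $C^1$ with locally Lipschitz derivative, so $F$ is $C^1$ with locally Lipschitz Jacobian $A(X_1)+A'(X_1)\mathbf{X}\mathbf{e}_1^T$.

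Second, I confirm the stability hypotheses. At $\mathbf{Y}^*$ the Jacobian is $\boldsymbol{\theta}$, which is Hurwitz by hypothesis, so $\mathbf{Y}^*$ is exponentially stable (Theorem \ref{theorem::FLLN_equilibrium}). The set $\mathcal{D}\subseteq\mathcal{U}$ is compact by assumption. The process $\mathbf{V}$ in (\ref{eq::CLT_SDE}) is precisely the linearised Gaussian diffusion used in the strong approximation of Prodhomme, so their coupling, built on a KMT-type construction of the Poisson processes $Y_{(i,\boldsymbol{\ell})}$ together with the Brownian motions $W_{(i,\boldsymbol{\ell})}$, produces the stated inequality with constants $C,\beta,\alpha$ that are uniform over $\mathbf{X}(0)\in\mathcal{D}$. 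One further point to address is that $\mathbf{x}^K(0)/K$ only converges to $\mathbf{X}(0)$. I would handle this either by letting $\mathbf{X}$ start at $\mathbf{x}^K(0)/K$, which lies in a slightly enlarged compact subset of $\mathcal{U}$ for large $K$ (the basin is open), or by absorbing the initial discrepancy into $\mathbf{V}(0)$.

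The main obstacle is that Prodhomme's results are stated under global regularity conditions: rates bounded on compacts with bounded derivatives, and often confinement to a compact state space. Here the rates grow linearly in $\mathbf{X}$ and the state space is unbounded. I would localise. Trajectories of (\ref{eq::FLLN_ODE}) are bounded (Theorem \ref{theorem::FLLN_equilibrium}), and $\mathbf{Y}^*$ is exponentially stable, so the forward orbit of $\mathcal{D}$ lies in a compact set $\mathcal{K}$. I would then modify the rates outside a neighbourhood of $\mathcal{K}$ so that they become globally bounded and Lipschitz, apply Prodhomme's theorem to the modified process, and observe that the two processes coincide until the exit time from that neighbourhood. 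On the event that the deviation stays below $\Delta(K)\ll 1$, the modified process never leaves the neighbourhood, so the bound transfers to $\mathbf{x}^K$. A subsidiary difficulty is that the rates $g_i(X_1)p_{(i,\boldsymbol{\ell})}(X_1)X_i$ vanish on the boundary of the orthant. However, $\mathcal{K}$ can be taken away from the face $X_i=0$ for the relevant types, since $\mathbf{Y}^*>0$ and the orbits of $\mathcal{D}\setminus\{\mathbf{0}\}$ enter the interior, so any non-degeneracy condition on the diffusion matrix needed near $\mathbf{Y}^*$ holds by (\ref{eq::S_local_covariance}).
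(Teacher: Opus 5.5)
This is essentially the paper's route. The paper gives no argument beyond invoking Theorem 1.1 of Prodhomme, with Assumption \ref{assumption::bounded_batch_size} supplying the finite jump set and Assumption \ref{assumption:density_dependent_process} the $C^1$ rates with locally Lipschitz derivative. Your hypothesis check is therefore the whole proof, provided you take your first reading of the initial condition ($\mathbf{X}$ started at $\mathbf{x}^K(0)/K$); absorbing the discrepancy into $\mathbf{V}(0)$ fails for slowly converging initial data, because the quadratic linearisation error need not fall below $\Delta(K)$.

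If you keep the localisation, note that a deviation below $\Delta(K)$ does not by itself keep the modified process inside the neighbourhood. You also need $\sup_{t \le T} \lVert \mathbf{V}(t) \rVert_2 \le r\sqrt{K}/2$, where $r$ is the distance from $\mathcal{K}$ to the complement of the neighbourhood. This fails only on an event of probability at most $C(T+1)e^{-cKr^2}$, which is absorbed into the stated bound because $\Delta(K) \ll 1$.
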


Denote by $\mathbf{V}^*$ a stationary Ornstein-Uhlenbeck process with drift matrix $\boldsymbol{\theta}$ (Equation (\ref{eq::theta_Jacobian})) and local covariance matrix $\boldsymbol{\Upsilon}$ (Equation (\ref{eq::S_local_covariance})). Suppose that $\boldsymbol{\theta}$ (Equation (\ref{eq::theta_Jacobian})) is Hurwitz stable with spectral abscissa $-\lambda^* < 0$. Provided $\mathbf{x}^K(0)/K \in \mathcal{U}$ and following a ``transitory period'' of length $(6/\lambda^*) \log(K)$, Corollary 1.3 of \textcite{prodhomme2023strong} yields the stronger result that the scaled process $\mathbf{x}^K/K$ is well-approximated by the process $\mathbf{Y}^* + \mathbf{V}^*/\sqrt{K}$, with precision $\alpha \log(K)/K \leq \Delta(K) \ll 1$ on time scales of order $\mathcal{O}(\exp(\beta^* K \Delta(K)))$ for some sufficiently large constant $\alpha$.

\subsection{Metastability in the vicinity of a stable limit cycle} \label{sec::metastability_limit_cycle}

Now, suppose that the system (\ref{eq::FLLN_ODE}) exhibits a stable limit cycle $\bar{\mathbf{Y}}$ with natural frequency $\omega_0$, whereby $\bar{\mathbf{Y}}(t + 2 \pi / \omega_0) = \bar{\mathbf{Y}}(t)$; in the case $d=2$, this limit cycle would necessarily enclose the fixed point $\mathbf{Y}^* > 0$ (Theorem \ref{theorem::planar_FLLN_limiting_behaviour}). Given the scaled process $\mathbf{x}^K/K$ is initialised in a sufficiently small neighbourbood of $\bar{\mathbf{Y}}(t)$, we use the work of \textcite{bressloff2020phase} to establish the time scale for persistence of the scaled process $\mathbf{x}^K/K$ in the vicinity of $\bar{\mathbf{Y}}$. While the framework of \textcite{bressloff2020phase} is motivated in the context of stochastic chemical reaction networks with mass action kinetics and polynomial transition rates, to the best of our understanding, the underlying reasoning generalises to our system under some additional technical assumptions.

Following \parencite{bressloff2020phase}, in the vicinity of the limit cycle $\bar{\mathbf{Y}}(t)$, we assume that the deterministic system (\ref{eq::FLLN_ODE}) permits a Floquet decomposition of the form
\begin{align}
    \mathbf{Z}(t) = \sum^d_{i=1} a_i \mathbf{p}_i(t) e^{\phi_i t} \label{eq::Floquet_decomposition}
\end{align}
where $\phi_1, \dots, \phi_d$ are Floquet exponents ordered such that $\text{Re}(\phi_1) \leq \dots \leq \text{Re}(\phi_d)$;  $a_1, \dots, a_d$ are constant coefficients; and $\mathbf{p}_1(t), \dots, \mathbf{p}_d(t)$ are the columns of the (unique) $2 \pi$-periodic matrix $P(\omega_0 t)$ such that
\begin{align*}
    P(\psi)^{-1} \mathbf{\bar{Y}}'(\psi) = \mathbf{1}^T \mathbf{e}_1.
\end{align*}
Provided the limit cycle is asymptotically stable, it is necessarily the case that $\phi_1 = 0$ (because of ``phase shift invariance'' along the limit cycle), while $\text{Re}(\phi_2) < 0$ \parencite{bressloff2020phase}.

To characterise the amplitude of the variation between the scaled process $\mathbf{x}^K/K(t)$ and the limit cycle $\bar{\mathbf{Y}}$, \textcite{bressloff2020phase} introduce a variational principle to formulate a stochastic phase $\varsigma(t)$ for the scaled process $\mathbf{x}^K/K(t)$. Setting
\begin{align*}
    \mathcal{G}_0(\mathbf{y}, z, \psi) := -2 \Big\langle P^{-1} ( \psi ) \big( \mathbf{y} - \mathbf{Y}(z) \big), \, P^{-1} ( \psi )  \mathbf{Y}'(z) \big)  \Big\rangle,
\end{align*}
they define the stochastic phase $\varsigma(t)$ to be the solution of the implicit equation
\begin{align}
    \mathcal{G}_0 \big( \mathbf{x}^K(t)/K, \varsigma(t), \varsigma(t) \big) = 0 \label{eq::stochastic_phase_equation}
\end{align}

We now state the result of \textcite{bressloff2020phase} in our setting.

\begin{theorem}[Based on \textcite{bressloff2020phase}]\label{theorem::limit_cycle_metastability}
Suppose that Assumptions \ref{assumption::bounded_batch_size} and \ref{assumption:density_dependent_process} hold, and that the functions $g_i: \mathbbm{R}_{\geq 0} \to \mathbbm{R}_{\geq 0}$ and $m_{i,j}: \mathbbm{R}_{\geq 0} \to \mathbbm{R}_{\geq 0}$ are twice continuously differentiable with uniformly bounded second derivatives. Further suppose that the system (\ref{eq::FLLN_ODE}) possesses an asymptotically stable limit cycle $\bar{\mathbf{Y}}(t)$ which permits a Floquet decomposition of the form (\ref{eq::Floquet_decomposition}). Take $K$ to be sufficiently large such that $\sup_{\tau \in [0, 2 \pi/\omega_0]} \max \{ \lVert P(\tau) \rVert_F, \lVert P(\tau)^{-1} \rVert)_F \} = \mathcal{O}(1)$ (where $\lVert \cdot \rVert_F$ denotes the Frobenius norm) and set $\delta =-\text{Re}(\phi_2) > 0$. Define the weighted amplitude of deviation from the limit cycle
\begin{align*}
    \mathbf{w}(t) := P \big( \varsigma(t) \big)^{-1} \bigg[ \frac{\mathbf{x}^K(t)}{K} - \bar{\mathbf{Y}} \big( \varsigma(t) \big) \bigg]
\end{align*}
where the stochastic phase $\varsigma(t)$ solves Equation (\ref{eq::stochastic_phase_equation}). Then for sufficiently small $\Delta(K) \leq \delta/4$, given $\lVert \mathbf{w}(0) \rVert_2 \leq \Delta(K)/8$, there exists a constant $\beta^* > 0$ such that
\begin{align*}
    \mathbbm{P} \bigg( \sup_{0 \leq t \leq T} \lVert \mathbf{w}(t) \rVert_2 \geq \Delta(K)  \bigg) \leq \delta T \exp( - \delta \beta^* K \Delta^2(K)).
\end{align*}
\end{theorem}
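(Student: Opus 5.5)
The plan is to reproduce the argument of \textcite{bressloff2020phase} in our setting. The main work is checking that each ingredient they use for mass-action chemical reaction networks is available for the density-dependent branching process $\mathbf{x}^K$.

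First, I would write $\mathbf{x}^K/K$ as a semimartingale. Using the Poisson representation after Assumption \ref{assumption:density_dependent_process},
\[
\frac{\mathbf{x}^K(t)}{K} = \frac{\mathbf{x}^K(0)}{K} + \int_0^t A\Big(\frac{x^K_1(s)}{K}\Big)\frac{\mathbf{x}^K(s)}{K}\,ds + \mathbf{M}^K(t),
\]
where $\mathbf{M}^K$ is a martingale. By Assumption \ref{assumption::bounded_batch_size}, its jumps are bounded by $\Xi/K$, and its predictable quadratic variation has density $K^{-1}\sum_{i,\boldsymbol{\ell}}(\boldsymbol{\ell}-\mathbf{e}_i)(\boldsymbol{\ell}-\mathbf{e}_i)^T g_i p_{(i,\boldsymbol{\ell})} x_i/K$. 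This density is $\mathcal{O}(1/K)$ uniformly on a bounded neighbourhood of $\bar{\mathbf{Y}}$; boundedness there follows from Theorem \ref{theorem::FLLN_equilibrium}.

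Second, I would justify the phase and amplitude coordinates. The implicit function theorem applied to $\mathcal{G}_0$, using $\partial_z\mathcal{G}_0\neq 0$ on the cycle, shows that $\varsigma(t)$ is well defined and twice continuously differentiable as a function of $\mathbf{x}^K/K$ on a tubular neighbourhood of $\bar{\mathbf{Y}}$ of fixed width. Here the assumed $C^2$ regularity of $g_i$ and $m_{i,j}$ is what guarantees that $\bar{\mathbf{Y}}$, $P$ and $\varsigma$ are $C^2$ with bounded derivatives.

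Third, I would apply It\^o's formula for jump processes to $\mathbf{w}(t)$, and then to $\lVert\mathbf{w}(t)\rVert_2^2$. Using the Floquet decomposition (\ref{eq::Floquet_decomposition}), the linearised drift of $\mathbf{w}$ in the transverse directions has real parts at most $-\delta$, and the phase direction is removed by the defining condition (\ref{eq::stochastic_phase_equation}). This should give
\[
d\lVert\mathbf{w}\rVert_2^2 \le \big(-2\delta\lVert\mathbf{w}\rVert_2^2 + c_1\lVert\mathbf{w}\rVert_2^3 + c_2/K\big)dt + d\mathcal{M}^K_t,
\]
where the martingale $\mathcal{M}^K$ has jumps $\mathcal{O}(\Delta/K)$ and quadratic variation rate $\mathcal{O}(\lVert\mathbf{w}\rVert_2^2/K)$. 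The cubic remainder comes from the second-order Taylor terms, which are controlled by the bounded second derivatives. On the event $\{\lVert\mathbf{w}\rVert_2\le\Delta\}$, taking $\Delta\le\delta/4$ (after shrinking constants) lets the cubic term be absorbed, so the drift is at most $-\delta\lVert\mathbf{w}\rVert_2^2+c_2/K$.

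Fourth, I would bound the exit probability. Split $[0,T]$ into blocks of length of order $1/\delta$. On each block, an exit from the ball of radius $\Delta$ starting inside radius $\Delta/2$ (or $\Delta/8$ initially) requires the martingale to make an excursion of size of order $\Delta^2$ against a negative drift. An exponential martingale (Bernstein/Freedman) inequality for martingales with bounded jumps then gives a block probability of at most $\exp(-\beta^*\delta K\Delta^2)$. The drift also brings the process back into radius $\Delta/2$ by the end of each block, except on an event of the same order. A union bound over the roughly $\delta T$ blocks yields the stated estimate.

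The main obstacle is the third step: obtaining a uniform negative drift for $\lVert\mathbf{w}\rVert_2^2$ in the $P^{-1}$-weighted norm. Floquet exponents control products over a full period, not instantaneous contraction, so the contraction rate $\delta$ need not hold pointwise in time. My first attempt would be to follow \textcite{bressloff2020phase} and absorb the Floquet exponents into $P$, so that the linearised amplitude equation has a constant coefficient matrix $\mathrm{diag}(\phi_2,\dots,\phi_d)$. If that fails because this matrix is not normal, I would instead use a Lyapunov-equation weighted norm, at the cost of changing the constants $\beta^*$ and $\delta/4$.
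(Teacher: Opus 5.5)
Your plan follows the same route as the paper, which gives no proof of its own and only asserts that the Bressloff--MacLaurin argument carries over to the branching setting. Your outline (a semimartingale with $\mathcal{O}(1/K)$ jumps from Assumption~\ref{assumption::bounded_batch_size}, a variational phase from the implicit function theorem that forces $w_1\equiv 0$, constant-coefficient transverse contraction at rate $\delta$ in the Floquet coordinates $P$, and a Freedman bound on blocks of length $\sim 1/\delta$ combined with a union bound) reconstructs that argument in more detail than the paper does. One thing to state explicitly is that absorbing the positive $c_2/K$ It\^o term over a block requires $c_2/(K\delta)\ll\Delta^2(K)$, i.e.\ a lower bound $K\delta\Delta^2(K)\gtrsim 1$ that the statement omits; for $\Delta(K)\ll K^{-1/2}$ the exit probability tends to one on an $\mathcal{O}(1/\delta)$ time scale, so the stated bound cannot hold there.
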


Theorem \ref{theorem::limit_cycle_metastability} (reformulated from \parencite{bressloff2020phase}) implies that the scaled process $\mathbf{x}^K/K$, when initialised in the vicinity of the stable limit cycle $\bar{\mathbf{Y}}(t)$, is expected to persist in a neighbourhood of $\bar{\mathbf{Y}}$ of order $\mathcal{O}(\Delta(K))$ for a subexponential time scale of order $\mathcal{O}(\exp(\beta^* K \Delta^2(K)))$ for some constant $\beta^* > 0$, with the caveat that the phase of the stochastic scaled process $\mathbf{x}^K(t)/K$ and the limit cycle $\mathbf{\bar{Y}}(t)$ may diverge on much earlier time scales \parencite{bressloff2020phase}.

\section{Ascent to the type 1 carrying capacity} \label{sec::ascent_carrying_capacity}

In the single type case, given reproduction decreases monotonically with population size, \textcite{jagers2011population} show that a process initialised with $o(K)$ individuals approaches the vicinity of the carrying capacity $aK$, $0 < a < 1$ with non-negligible probability within $\mathcal{O}(\log K)$ time; the logarithmic time scale of ascent is intuited from expected exponential growth in the supercritical parameter regimes attained below the carrying capacity \parencite{jagers2011population}. In our multitype setting, we introduce Assumption \ref{assumption::monotonicity} below.

\begin{assumption} \label{assumption::monotonicity}
Offspring batch sizes decrease monotonically (in the sense of strong first order stochastic dominance, Definition 4 of \parencite{kopa2018strong}) as a function of the type 1 subpopulation size, that is, for all $i \in \{1, \dots, d\}$, $c_1 \leq c_2$ and upper sets $U \subset (\mathbbm{Z}_{\geq 0})^d$ (characterised by the property that,  provided $\mathbf{w} \in U$, $\mathbf{v} \in U$ for all $\mathbf{v} \geq \mathbf{w}$), $\sum_{\mathbf{\ell} \in U} [ p_{i, \mathbf{\ell}}(c_1) - p_{i, \mathbf{\ell}}(c_2) ] \geq 0$.
\end{assumption}

Assumption \ref{assumption::monotonicity} means that the offspring batch sizes decrease monotonically with the type 1 subpopulation size in the sense of strong multivariate first order stochastic dominance \parencite{kopa2018strong}. Under Assumptions \ref{assumption::limit_theorem_scaled_carrying_capacity}, \ref{assumption::type_1_offspring}, \ref{assumption::bounded_batch_size},  \ref{assumption:density_dependent_process}, \ref{assumption::irreducibility} and \ref{assumption::monotonicity}, we can show that the type 1 subpopulation reaches the threshold $aK$, $0 < a < 1$ with non-negligible probability within $\mathcal{O}(\log(K))$ time provided $\sum^d_{i=2} x_i^K(0) > 0$; we view Theorem \ref{theorem::logarithmic_ascent} below as an analogue of Theorem 3 of \parencite{jagers2011population}. To prove this result, we first show show that the types $2, \dots, d$ subpopulations attain a size of order $\Theta(K)$ with non-negligible probability within $\mathcal{O}(\log K)$ time, whereafter the FLLN of \parencite{ethier2009markov} can be used to establish ascent of the type 1 subpopulation to any level $aK$, $0 < a < 1$ in a subsequent period of order $\mathcal{O}(1)$ time with non-negligible probability. 

We cannot establish ascent of the process $\mathbf{x}^K$ to the vicinity of the unique non-zero fixed point $K \mathbf{Y}^*$ (with $Y_1=1$) in generality in the multitype case (see the discussion of periodic orbits and asymptotic stability in Section \ref{sec::FLLN_limiting_behaviour}). However, in the special case $d=2$ with $\frac{d \lambda_\text{max}}{dx}|_{x=1} <0$ and $A'(x) \leq 0$ element-wise, we can apply Corollary \ref{corollary::FLLN_limiting_behaviour_simple}, Lemma 3.1(i) of \textcite{prodhomme2023strong} and the reasoning of Theorem  \ref{theorem::logarithmic_ascent} below to establish ascent of the process $\mathbf{x}^K$ to a neighbourhood of $K \mathbf{Y}^*$ in $\mathcal{O}(\log(K))$ time with non-negligible probability.

\begin{theorem} \label{theorem::logarithmic_ascent}
Suppose Assumptions \ref{assumption::limit_theorem_scaled_carrying_capacity}, \ref{assumption::type_1_offspring}, \ref{assumption::bounded_batch_size},  \ref{assumption:density_dependent_process}, \ref{assumption::irreducibility} and \ref{assumption::monotonicity} hold. Then given $\mathbf{x}^K(0) = \mathbf{Z}$ with $\sum^d_{i=2} Z_i > 0$, in the limit $K \to \infty$, there is a non-negligible probability with which the type 1 subpopulation size $x^K_1$ reaches the threshold $aK$, $0 < a < 1$ within a period of order $\mathcal{O}(\log(K))$.
\end{theorem}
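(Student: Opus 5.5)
The plan is a two-phase argument, as sketched before the statement. In the first phase I show that, with probability bounded away from zero uniformly in $K$, the types $2,\dots,d$ subpopulations grow to size $cK$ for some small $c>0$ within time $\mathcal{O}(\log K)$, while $x_1^K$ remains below $aK$. In the second phase I apply the FLLN (Theorem \ref{theorem::FLLN}) from that state. It shows that $x_1^K/K$ exceeds $a$ within an additional $\mathcal{O}(1)$ time with probability tending to one. If $x_1^K$ already reaches $aK$ during the first phase, we are done.

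For the first phase I would use Assumption \ref{assumption::monotonicity} to build a monotone coupling. Fix $a'\in(a,1)$. While $x_1^K<a'K$, the offspring law of every individual stochastically dominates (in the strong first-order sense) the law at scaled level $a'$. So as long as $x_1^K \le a'K$, I can couple $\mathbf{x}^K$ with a multitype Markov branching process $\tilde{\mathbf{x}}$ with \emph{fixed} parameters $g_i(a')$, $p_{(i,\boldsymbol\ell)}(a')$ such that $\tilde{\mathbf{x}}\le \mathbf{x}^K$ componentwise. Death rates also vary with $x_1$; I would handle this by a time-change or thinning argument using $g_{\min},g_{\max}$. Alternatively, I would strengthen the coupling to dominate at the level of jump intensities per type, and I expect this to need some care.

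By Assumption \ref{assumption::limit_theorem_scaled_carrying_capacity}, the process $\tilde{\mathbf{x}}$ is supercritical with Malthusian parameter $\lambda:=\lambda_{\max}(a')>0$. Under Assumption \ref{assumption::irreducibility}, the types $2,\dots,d$ block is irreducible, and in case (ii) type 1 is a pure sink fed by this block, so Perron--Frobenius applies to that block. Then the classical Kesten--Stigum theorem for positively regular multitype Markov branching processes applies, which requires finite second moments via Assumption \ref{assumption::bounded_batch_size}. It gives $e^{-\lambda t}\tilde{\mathbf{x}}(t)\to W\mathbf{v}$ almost surely, with $\mathbf{v}>0$ on types $2,\dots,d$ and $\mathbb{P}(W>0)=1-q>0$ when starting from $\mathbf{Z}$ with $\sum_{i\ge2}Z_i>0$.

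Next I choose $t_K=\lambda^{-1}\log K + C$. On the event $\{W>w\}$, for $w$ small, this gives $\sum_{i\ge 2}\tilde x_i(t_K)\ge cK$ with probability at least $\mathbb{P}(W>w)/2$ for large $K$. The coupling then transfers this lower bound to $\mathbf{x}^K$, up to the stopping time $\tau_{a'}=\inf\{t: x_1^K(t)\ge a'K\}$. On $\{\tau_{a'}\le t_K\}$ the type 1 level $aK$ has already been crossed, which is also success. The main obstacle is keeping this coupling valid up to $\tau_{a'}$, since reproductive parameters vary with $x_1$ rather than being fixed. I would construct it explicitly through Poisson representations: for each individual, realise the offspring batch at level $x_1$ as a monotone function of a uniform variable via the strong dominance ordering. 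The $\mathcal{O}(\log K)$ time bound and non-degeneracy then follow.

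For the second phase, on the good event I restart at time $t_K\wedge\tau_{a'}$ with $\mathbf{x}^K/K$ lying in the compact set $\{X_1\le a',\ \sum_{i\ge2}X_i\ge c,\ \lVert\mathbf X\rVert\le M\}$; the upper bound $M$ comes from Markov's inequality and the integrability bound $\mathbb{E}\lVert\mathbf x\rVert\le e^{Ct}\lVert\mathbf x(0)\rVert$. By (\ref{eq::ode_bound_3b}) in the proof of Theorem \ref{theorem::FLLN_equilibrium}, the ODE satisfies $X_1(1/g_{\max})\ge \varepsilon g_{\min}c/(e g_{\max})$. That is only a lower bound, so I iterate. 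While $X_1< a''$ with $a''\in(a,1)$, the monotonicity and supercriticality at level $a''$ give growth of a Perron-weighted functional of the types $2,\dots,d$ at rate $\lambda_{\max}(a'')>0$; combined with (\ref{eq::ode_bound_3a}), this forces $X_1$ above $a''$ within a uniform time $T^*$ over the compact set.

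The FLLN with uniform convergence on $[0,T^*]$ then gives $x_1^K\ge aK$ within $T^*$ with probability tending to one, uniformly over initial states in the compact set by continuity of the flow. Combining the two phases via the strong Markov property yields the claim.
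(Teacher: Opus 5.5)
Your Phase 1 has a genuine gap. It needs a componentwise domination $\tilde{\mathbf{x}}(t)\le\mathbf{x}^K(t)$ on $[0,\tau_{a'}]$ by a continuous-time branching process with frozen rates $g_i(a')$ and frozen laws $p_{(i,\boldsymbol\ell)}(a')$. Such a domination need not exist, not even in distribution. Assumption \ref{assumption::monotonicity} orders only the offspring laws, while the death rates $g_i(x_1/K)$ are unconstrained and may lie on either side of $g_i(a')$.

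A concrete failure occurs in the two-type parasite model with $g(0)\,(m(0)-1)<g(a')\,(m(a')-1)$, which is allowed (for instance $g$ steeply increasing, the biologically natural case). For fixed $t>0$ and $K\to\infty$ one has $x_1^K(t)/K\to0$, so $\mathbbm{E}[x_2^K(t)]\to Z_2e^{g(0)(m(0)-1)t}$. This is strictly below $\mathbbm{E}[\tilde x_2(t)]=Z_2e^{g(a')(m(a')-1)t}$. Hence $\tilde x_2(t)\le x_2^K(t)$ fails under every coupling, and $\lambda_\text{max}(a')$ in $t_K$ is not a lower bound on the growth rate of $\mathbf{x}^K$.

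Your suggested repairs do not reach this. A Strassen or uniform-variable realisation orders the batches but not the lifetimes: an individual of $\mathbf{x}^K$ that dies before its partner and leaves no offspring of its own type already breaks the order. A time change would have to run at the type-dependent, time-varying rates $g_i(x_1^K/K)/g_i(a')$ simultaneously. Since Kesten--Stigum is a statement on the clock of $\tilde{\mathbf{x}}$, nothing transfers to $\mathbf{x}^K(t_K)$. Your argument is fine only when the $g_i$ are constant, because then the retained subpopulation is itself a fixed-rate Markov branching process.

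The missing idea, which is how the paper proceeds, is to separate survival from timing.
\begin{enumerate}
\item[(a)] \emph{Survival.} Couple only the genealogy. At each jump, use Strassen's theorem to draw $\boldsymbol\ell_n\ge\boldsymbol\ell_n'$ with $\boldsymbol\ell_n'$ from the frozen law; the paper freezes at a small level $\delta$, but your $a'$ also works. Retain only the $\boldsymbol\ell_n'$ offspring. Whatever the death rates, the family tree of the retained individuals is a supercritical Galton--Watson tree (on types $2,\dots,d$ in case (ii)). Extinction before the exit time forces extinction of that tree, so the exit is non-trivial with probability at least $1-q>0$, uniformly in $K$. No Kesten--Stigum is needed.
\item[(b)] \emph{Timing.} The $\mathcal{O}(\log K)$ bound comes separately from a drift estimate. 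The paper uses $\boldsymbol\zeta(0)$ on $\{x_1^K<\delta K\}$. Alternatively, your own Phase 2 computation, applied to the stochastic process, shows that $\mathbf{x}^K\boldsymbol\zeta(a')$ has drift at least $\frac{g_\text{min}}{g_\text{max}}\lambda_\text{max}(a')\,\mathbf{x}^K\boldsymbol\zeta(a')$ while $x_1^K<a'K$. Either controls the exit time from $\{0<\mathbf{x}^K\boldsymbol\zeta<cK,\ x_1^K<a'K\}$. For a genuine upper tail on that exit time, use a supermartingale such as $e^{\theta ct}(1+\mathbf{x}^K\boldsymbol\zeta)^{-\theta}$ with small $\theta>0$, valid away from small values of $\mathbf{x}^K\boldsymbol\zeta$. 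The submartingale $e^{-ct}\mathbf{x}^K\boldsymbol\zeta$ alone does not provide one.
\end{enumerate}
With $a'>a$, exit through $x_1^K\ge a'K$ is already success, so you can dispense with the paper's $M^X/M/\infty$ estimate in case (ii).

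Finally, start Phase 2 at this exit time rather than at $t_K$. The bound $\mathbbm{E}\lVert\mathbf{x}^K(t_K)\rVert\le e^{Ct_K}\lVert\mathbf{Z}\rVert$ grows like $K^{C/\lambda}$, and typically $C>\lambda$. So Markov's inequality does not give $\lVert\mathbf{x}^K(t_K)\rVert\le MK$. At the exit time, by contrast, the overshoot is at most $\Xi\lVert\boldsymbol\zeta\rVert$ by Assumption \ref{assumption::bounded_batch_size}, which gives you the compact set for the FLLN directly.
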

\begin{proof}

By Assumption \ref{assumption::limit_theorem_scaled_carrying_capacity}, the Perron-Frobenius theorem (Theorem 1.5 of \parencite{seneta2006non}) and \parencite{athreya2004branching}, for $z <1$, the left eigenvector $\boldsymbol{\zeta}(z)$ corresponding to the (necessarily real) eigenvalue $\lambda_\text{max}(z) > 0$ is either element-wise positive (Case (i) of Assumption \ref{assumption::irreducibility}) or satisfies $\zeta_1(z) = 0$ and $\boldsymbol{\zeta}_{2:d}(z)>0$ (Case (ii) of Assumption \ref{assumption::irreducibility}). We normalise each vector $\boldsymbol{\zeta}(z)$ such that $\min \{\{\zeta_1(z), \dots, \zeta_d(z)\} \setminus \{0\} \} = 1$. 

For notational convenience, we set $\boldsymbol{\zeta} := \boldsymbol{\zeta}(0)$ hereafter. Under Assumption \ref{assumption::limit_theorem_scaled_carrying_capacity}, there exists $\delta \in (0, 1)$ such that $|A_{i,j}(x) - A_{i,j}(0)| < \frac{\lambda_\text{max}(0)}{2 d}$ for all $x \in [0, \delta]$. Therefore,
\begin{align}
     \sum^d_{i=1} g_i \bigg( \frac{x_1^K}{K} \bigg) x_i^K \sum^d_{j=1} \bigg[ m_{i,j} \bigg( \frac{x_1^K}{K} \bigg) - \delta_{i,j} \bigg] \zeta_j > \frac{\lambda(0)}{2} \mathbf{x}^K \boldsymbol{\zeta} \text { provided } x_1^K < \delta K. \label{eq::supercritical_bound}
\end{align}

Suppose we initialise $\mathbf{x}^K(0) = \mathbf{z}$ with $\sum^d_{i=2} z_i > 0$. Define the stopping time 
\begin{align*}
    T_\delta(K) & = \inf\{ t: \mathbf{x}^K(t) \boldsymbol{\zeta} \notin (0, \delta K) \text{ or } x_1^K(t) \geq \delta K \}.
\end{align*}

Then using Dynkin's formula (Theorem 8.2 of \parencite{jagers2016size}, re-stated in Equation (\ref{eq::dynkin_formula})), the optional stopping theorem (Theorem 3.2 of \parencite{revuz2013continuous}) and the bound (\ref{eq::supercritical_bound}),
\begin{align*}
     \EX \big[ \mathbf{x}^K \big(t \wedge T_\delta(K) \big) \boldsymbol{\zeta} \big] \geq \mathbf{z} \boldsymbol{\zeta} + \frac{\lambda_\text{max}(0)}{2} \EX \bigg[ \int^{t \wedge T_\delta(K)}_0 \mathbf{x}^K \big(s \wedge T_\delta(K) \big) \boldsymbol{\zeta} ds \bigg].
\end{align*}

Applying Fubini's theorem and Gronwall's inequality, it follows that
\begin{align}
    \EX \big[ \mathbf{x}^K \big(t \wedge T_\delta(K) \big) \boldsymbol{\zeta} \big] \geq \mathbf{z} \boldsymbol{\zeta} \EX \Big[  e^{\frac{ \lambda_\text{max}(0)}{2} (t \wedge T_\delta(K))} \Big]. \label{eq::logarithmic_ascent_intermediate_1}
\end{align}

From the construction of the stopping time $T_\delta(K)$ and provided offspring batch sizes have uniformly bounded support $[0, \Xi/d]^d$ (Assumption \ref{assumption::bounded_batch_size}), the dominated convergence theorem yields
\begin{align}
    \lim_{t \to \infty} \EX \big[ \mathbf{x}^K \big(t \wedge T_\delta(K) \big) \boldsymbol{\zeta} \big]  = \EX \big[ \mathbf{x}^K \big(T_\delta(K) \big) \boldsymbol{\zeta} \big] \leq \delta K + \Xi \lVert \boldsymbol{\zeta} \rVert. \label{eq::logarithmic_ascent_intermediate_2}
\end{align}

Using Equations (\ref{eq::logarithmic_ascent_intermediate_1}) and (\ref{eq::logarithmic_ascent_intermediate_2}), and further applying the dominated convergence theorem, it follows that
\begin{align}
    \EX \Big[ e^{\frac{ \lambda(0)}{2} T_\delta(K)} \Big] = \lim_{t \to \infty}  \EX \Big[  e^{\frac{ \lambda(0)}{2} (t \wedge T_\delta(K))} \Big] \leq \frac{\delta K +  \Xi \lVert \boldsymbol{\zeta} \rVert}{\mathbf{z} \boldsymbol{\zeta}}. \label{eq::t_delta_mgf}
\end{align}
From Equation (\ref{eq::t_delta_mgf}), we conclude that $T_\delta(K) = \mathcal{O}(\text{log}(K))$. 

To prove that $\sup_{t \in [0, T_\delta(K)]} \mathbf{x}^K(t) \boldsymbol{\zeta} \geq c K$ with non-negligible probability for some constant $c > 0$, we consider each case of Assumption \ref{assumption::irreducibility} sequentially. Hereafter, we denote by $R_i(x)$ a random vector with density $p_{(i, \boldsymbol{\ell})}(x)$. 

\begin{enumerate}
\item[Case (i)] Since $\boldsymbol{\zeta} > 0$ with $\min_i \zeta_i = 1$, it is almost surely the case that either $\mathbf{x}^K(T_\delta(K)) \boldsymbol{\zeta} = 0$ or $\mathbf{x}^K(T_\delta(K)) \boldsymbol{\zeta} \geq \delta K$. By analogy with the proof of Theorem 3 of \parencite{jagers2011population}, we thus derive an upper bound for $\mathbbm{P}(\mathbf{x}^K(T_\delta(K)) \boldsymbol{\zeta} = 0)$ by coupling the process $\mathbf{x}^K$ to the $d$-type Galton Watson branching process $\tilde{\mathbf{x}}_\delta$ with offspring distribution $R_{i}(\delta)$ for individuals of type $i \in \{1, \dots, d\}$, observing that $\tilde{\mathbf{x}}_\delta$ is both irreducible and supercritical since $\delta < 1$.

To couple $\mathbf{x}^K$ and $\tilde{\mathbf{x}}_\delta$, we begin by generating sample paths for the process $\mathbf{x}^K$, augmented with offspring batches with distribution $R_i(\delta)$. Denote by $\tau_n$ the time of the $n^\text{th}$ jump for the process $\mathbf{x}^K$, with $\tau_0 = 0$ and $\mathbf{x}^K(\tau_0) = \mathbf{z}$. Given $\mathbf{x}^K(\tau_{n-1}) = \mathbf{y}$, we sample:
\begin{itemize}
    \item An inter-jump interval
    \begin{align*}
        \tau_n - \tau_{n-1} \sim \text{Exponential} \bigg( \sum^d_{i=1} g_i(y_1/K) y_i \bigg).
    \end{align*}
    \item The type of the individual that reproduces upon the $n^\text{th}$ jump
    \begin{align*}
        s_n \sim \text{Categorical} \bigg( \frac{g_1(y_1/K) y_1}{\sum^d_{i=1} g_i(y_1/K) y_i}, \dots, \frac{g_d(y_1/K) y_d}{\sum^d_{i=1} g_i(y_1/K) y_i} \bigg).
    \end{align*}
    \item A coupled pair of random vectors $(\boldsymbol{\ell_n}, \boldsymbol{\ell'_n})$ such that
    \begin{align*}
        \boldsymbol{\ell_n} \sim R_{s_n}(y_1/K) \qquad \boldsymbol{\ell'_n} \sim R_{s_n}(\delta) \qquad \mathbbm{P}(  \boldsymbol{\ell}_n \geq \boldsymbol{\ell}_n' = 1 \, | \, z_1 \leq \delta K);
    \end{align*}
    under Assumption \ref{assumption::monotonicity}, Theorem 1 of \parencite{kopa2018strong} and Strassen's theorem \parencite{lindvall1999strassen} guarantee the existence of such a coupling.
\end{itemize}

We then set $\mathbf{x}^K(t) = \mathbf{y}$ for all $t \in [\tau_{n-1}, \tau_n)$ and $\mathbf{x}^K(\tau_n) = \mathbf{y} + \boldsymbol{\ell}_n - \mathbf{e}_{s_n}$. 

Given a realisation of $\mathbf{x}^K$, we define the stopping time
\begin{align*}
    L_\delta(K) = \inf \{ n \in \mathbbm{Z}_{\geq 0}: \mathbf{x}^K(\tau_n) \notin (0, \delta K) \} \implies \tau_{L_\delta(K)} = T_\delta(K).
\end{align*}

We then construct a thinned process $\mathbf{u}^K_\delta$ based on the first $L_\delta(K)$ jumps of $\mathbf{x}^K(\tau_n)$: upon the $n^\text{th}$ jump in the process $\mathbf{x}^K$, we remove $\boldsymbol{\ell}_n - \boldsymbol{\ell}'_n$ newly-generated offspring (chosen uniformly at random), and their descendents. By construction, under this coupling, it follows that
\begin{align*}
    \mathbbm{P} \Big( \mathbf{u}^K_\delta \big( T_\delta(K) \big) \leq \mathbf{x}^K \big( T_\delta(K) \big) \Big) = 1.
\end{align*}

In the interval $[0, T_\delta(K)]$, the death of each individual of type $i$ in the process $\mathbf{u}_\delta^K$ is associated with the birth of an i.i.d. offspring batch $R_i(\delta)$, akin to the Galton-Watson process $\tilde{\mathbf{x}}_\delta(n)$. We thus deduce that
\begin{align*}
     \mathbbm{P} \Big( \mathbf{x}^K(T_\delta(K)) = \mathbf{0} \Big)  \leq \mathbbm{P} \Big( \mathbf{u}^K_\delta \big( T_\delta(K) \big) = \mathbf{0} \Big) \leq \mathbbm{P} \Big( \lim_{n \to \infty} \tilde{\mathbf{x}}_\delta(n) = \mathbf{0} \Big),
\end{align*}
whereby
\begin{align*}
     \mathbbm{P} \Big( \mathbf{x}^K(T_\delta(K)) \boldsymbol{\zeta} \geq \delta K \Big) \geq \mathbbm{P} \Big( \lim_{n \to \infty} \lVert \tilde{\mathbf{x}}_\delta(n) \rVert = \infty \Big) > 0,
\end{align*}
where the RHS is strictly positive and independent of $K$.

\item[Case (ii)] Here, we observe that $\zeta_1=0$. We can readily adapt the argument above to conclude that
\begin{align}
     \mathbbm{P} \Big( \mathbf{x}^K \big(T_\delta(K) \big) \neq \mathbf{0} \Big) & = \mathbbm{P} \Big( \mathbf{x}^K(T_\delta(K)) \boldsymbol{\zeta} \geq \delta K \text{ or } x_1^K(T_\delta(K)) \geq \delta K \Big) \notag \\
     & \geq \mathbbm{P} \Big( \lim_{n \to \infty} \lVert \hat{\mathbf{x}}_\delta(n) \rVert = \infty \Big) > 0, \label{eq::intermediate_galton_watson_bound}
\end{align} 
where $\hat{x}_\delta$ is the $(d-1)$-type Galton Watson branching process with offspring distribution $(R_{i}(\delta))_{2:d}$ for individuals of type $i \in \{2, \dots, d\}$. 

Denote by $N_c(\tau)$ the number of busy servers at time $\tau$ in the $M^X/M/\infty$ queue with $N(0)=z_1$, arrival rate $c K g_\text{max}$, fixed batches of size $\Xi$ and service rate $g_\text{min}$ (where the constants $g_\text{min}$ and $g_\text{max}$ are given by Equations (\ref{constant::scaling_gmin}) and (\ref{constant::scaling_gmax}) respectively). Under Assumption \ref{assumption::bounded_batch_size}, we can construct a coupling such that
\begin{align*}
    N_c(\tau) \geq x_1^K(\tau) \text{  provided } \sup_{t \in [0, \tau]} \mathbf{x}^K(t) \boldsymbol{\zeta} \leq cK.
\end{align*}

By Corollary 2.2 of \parencite{daw2019distributions}, for all $\tau \geq 0$,
\begin{align*}
    \EX \big[ e^{N_c(\tau)} \big] \leq e^{z_1 + c K g_\text{max} \sum^\Xi_{j=1} {\Xi \choose j} \frac{(e-1)^j}{j g_\text{min}}}.
\end{align*}
By Markov's inequality, it then follows that
\begin{align*}
    \mathbbm{P} \bigg( & x_1^K(\tau) \geq \delta K \, \bigg| \, \sup_{t \in [0, \tau]} \mathbf{x}^K(t) \boldsymbol{\zeta} \leq cK \bigg) \leq \mathbbm{P} \big( N_c(\tau) \geq \delta K \big) \leq e^{z_1 -K \big( \delta - c g_\text{max} \sum^\Xi_{j=1} {\Xi \choose j} \frac{(e-1)^j}{j g_\text{min}} \big)}.
\end{align*}
Now, choose $c = \delta/(2 g_\text{max} \sum^\Xi_{j=1} {\Xi \choose j} \frac{(e-1)^j}{j g_\text{min}})$. Then we conclude that
\begin{align}
    \mathbbm{P} \bigg( & x_1^K \big( T_\delta(K) \big) \geq \delta K \, \text{ and } \, \sup_{t \in [0, T_\delta(K)]} \mathbf{x}^K(t) \boldsymbol{\zeta} \leq c K \bigg) \notag \\
    & \leq \mathbbm{P} \bigg( x_1^K \big( T_\delta(K) \big) \geq \delta K \, \bigg| \, \sup_{t \in [0, T_\delta(K)]} \mathbf{x}^K(t) \boldsymbol{\zeta} \leq c K \bigg) \leq e^{z_1 - \frac{\delta}{2} K} \to \infty \text{ as } K \to \infty. \label{eq::x1_hitting_bound_intermediate}
\end{align}

Using Equations (\ref{eq::intermediate_galton_watson_bound}) and (\ref{eq::x1_hitting_bound_intermediate}), it follows that $\sup_{t \in [0, T_\delta(K)]} \mathbf{x}^K(t) \boldsymbol{\zeta} \geq c K$ with non-negligible probability in the limit $K \to \infty$.
\end{enumerate}

Provided $\mathbf{x}^K(0) = \mathbf{z}$ with $\sum^d_{i=2} z_i > 0$, we have established the existence of a constant $c>0$ such that $\mathbf{x}^K \boldsymbol{\zeta}$ hits the threshold $c K$ with non-negligible probability within a time scale of order $\mathcal{O}(\log(K))$ (inferred from Equation (\ref{eq::t_delta_mgf})). In the limit $K \to \infty$, the FLLN of \parencite{ethier2009markov} (stated in Theorem \ref{theorem::FLLN}) yields strong convergence of the scaled sample paths $\mathbf{x}^K/K$ to the deterministic trajectories $\mathbf{X}$ of system of ODEs (\ref{eq::FLLN_ODE}) on finite time horizons, given $\lim_{K \to \infty} \mathbf{x}^K(0)/K = \mathbf{X}(0)$. For any $0 < a < 1$, it thus remains to bound the (deterministic) stopping time $\tau_a(\mathbf{Z}) := \inf \{ \tau \geq 0: X_1 \geq a  \, | \, \mathbf{X}(0) = \mathbf{Z}  \}$ where we can make the assumption that $\boldsymbol{\zeta}(a)^T \mathbf{Z} \geq \frac{c}{\max_{i \in \{1, \dots, d\}} \zeta_i} =: c'$.

Set $G(x) := \text{diag}(g_1(x), \dots, g_d(x))$. Under Assumption \ref{assumption::monotonicity}, the mean offspring matrix $M(x) = (m_{i,j}(x))^d_{i,j=1}$ is monotonically decreasing element-wise. Therefore, provided $X_1 \leq a$,
\begin{align*}
    \frac{d}{dt} \big(  \boldsymbol{\zeta}(a)^T \mathbf{X} \big) & = \boldsymbol{\zeta}(a)^T \big[  M(X_1)^T - I_d \big] G(X_1) \mathbf{X} \\
    & = \boldsymbol{\zeta}(a)^T A(a) G^{-1}(a) G(X_1) \mathbf{X} + \boldsymbol{\zeta}(a)^T \big[ M(X_1)^T - M(a)^T ] G(X_1) \mathbf{X} \\
    & \geq \frac{g_\text{min}}{g_\text{max}} \lambda_\text{max}(a) \boldsymbol{\zeta}^T (a) \mathbf{X}.
\end{align*}
By Gronwall's inequality, it then follows that
\begin{align}
    \boldsymbol{\zeta}(a)^T \mathbf{X}(t) \geq c' e^{\frac{g_\text{min}}{g_\text{max}} \lambda_\text{max}(a) t} \text{ for all } t < \tau_a(\mathbf{Z}). \label{eq::grownall_growth_bound_intermediate}
\end{align}

Under Assumption \ref{assumption::type_1_offspring},
\begin{align}
    \frac{dX_1}{dt} & \geq \varepsilon g_\text{min} \sum^d_{i=2} X_i - g_\text{max} a \text{ for all } t < \tau_a(\mathbf{Z}). \label{eq::type_1_growth_bound_intermediate}
\end{align}

Using Equations (\ref{eq::grownall_growth_bound_intermediate}) and (\ref{eq::type_1_growth_bound_intermediate}), it then follows that
\begin{align*}
    X_1(t) \geq c' \frac{\varepsilon g_\text{max}}{\lambda_\text{max}(a) \max_{i \in \{1, \dots, d \} } \zeta_i(a)} \Big[ e^{\frac{g_\text{min}}{g_\text{max}} \lambda(a) t} - 1 \Big] - a g_\text{max} t \text{ for all } t < \tau_a(\mathbf{Z});
\end{align*}
since $X_1(t) \leq a$ for all $0 \leq t < \tau_a(\mathbf{Z})$ by construction, we can thus conclude that $\tau_a(\mathbf{Z})$ is of order $\mathcal{O}(1)$. The statement of the proof thus follows.

\end{proof}

\section{A two-type model of immunity-modulated chronic parasitic infection} \label{sec::example_parasite_model}

We now formulate a within-host model of immunity-modulated chronic parasitic infection, in line with our motivating construction.

\subsection{On the construction of an abstracted immunity level}

Host immunity is a key determinant of within-host disease dynamics. Extended exposure to parasites is typically required to mount a robust immune response that can then curb parasite proliferation. The immune response, however, is liable to wane in the absence of exposure. Here, we adopt a heavily-simplified abstracted model of immunity akin to \parencite{mehra2024hybrid}. We assume that the death of each parasite is associated with the acquisition of one or more immunity `increments', or individuals of type 1, with a possibly state-dependent probability that is uniformly bounded below by $\varepsilon >0$ (Assumption \ref{assumption::type_1_offspring}). Each type 1 individual is modelled to possess an exponentially-distributed lifetime of mean duration $1/\varphi$. The sum of immunity increments (equivalently, the type 1 subpopulation size) then models a discretised immunity level that serves as a measure of cumulative parasite exposure on an immunologically relevant time scale \parencite{mehra2024hybrid}, with the general principle that the time scale $1/\varphi$ on which immunity increments wane is significantly longer than the expected lifespan of each parasite. We assume that no offspring are generated upon the loss of an immunity increment. The reproductive parameters of the remaining parasite types $2, \dots, d$, which may represent different parasite life stages or reservoirs (for example, tissue-stage amastigotes vs blood-stage trypomastigotes of \textit{Trypanosoma cruzi}, the causative agent of Chagas disease \parencite{de2024chagas}), are then formulated as a function of this abstracted immunity level. While the mechanisms of immune modulation are highly complex and multifaceted in reality, the notion of an abstracted immunity level has featured in various within-host models in the literature, including the discrete-time stochastic framework of \parencite{gurarie2007stochastic} in which an immunity functional governs transition rates between three malarial infection classes, and the deterministic differential equation based models of \parencite{duke2021mathematical, neant2021modeling} amongst others.

In this work, we impose a soft carrying capacity with respect to the immunity level (Assumption \ref{assumption::type_1_carrying_capacity}), to reflect a shift from supercritical parasite proliferation in an immune-naive individual to dampened subcritical parasite reproduction in an individual with substantial disease-controlling immunity. Rather than imposing an upper bound on the immunity level (as in \parencite{hartfield2015within}, where a hard carrying capacity is imposed on the immune effector population to reflect ``an intrinsic limitation in the host resources allocated to immunity''), we formulate the parasite reproductive parameters to converge in the infinite immunity limit (Assumption \ref{assumption::type_1_carrying_capacity}) whereby immunity has a saturating effect on parasite reproduction. Immunity is the key controlling mechanism under our framework. We deem host cell depletion to be negligible, a reasonable assumption if quasi-equilibrium parasitemia and host cell availability differ by multiple orders of magnitude, as in chronic Chagas disease; this differs from the classical target cell limited model of viral kinetics \parencite{perelson2002modelling}, for which quasi-equilibrium behaviour (or the ``set-point level'' in the language of viral kinetics) can be attributed to host cell depletion. Since we aim to model sustained low-level infection, we do not impose a hard carrying capacity for the pathogen burden (in contrast to  \parencite{antia1994within, kennedy2014pathogen}, where this hard carrying capacity is interpreted as a host mortality threshold). Unlike \parencite{duneau2025within}, we do not model physiological damage incurred by the host over the course of infection.

We introduce this abstracted immunity level with a view towards constructing a unified within-host framework of both acute and chronic infection. While quasi-equilibrium behaviour, compatible with sustained chronic infection, could emerge under a simpler framework with a soft carrying capacity imposed on the parasite burden itself \parencite{jagers2011population, hamza2016establishment, hognas2019lifetime} (rather than an abstracted immunity level), the resultant model would be poorly-equipped to characterise the consequences of drug treatment for chronic infection: substantial but incomplete parasite clearance could lead to a large, sudden shift in the underlying reproductive parameters of this system, pushing the model into a supercritical regime that could artificially inflate the probability of rebound (versus parasite clearance), in addition to accelerating the time scale of rebound following drug treatment. The present framework allows us to draw a distinction between acute parasite dynamics in an immune-naive individual (inoculated with a small number of parasites) versus post-treatment dynamics (driven by a comparably small number of surviving parasites) in an individual treated for chronic infection.

\subsection{Model construction}

Here, we focus on the case $d=2$ whereby we jointly model immunity (type 1) and the aggregate within-host parasite burden (type 2), without distinguishing parasite stages or reservoirs. On the state space $(\mathbbm{Z}_{\geq 0})^2$, we construct a sequence of density-dependent branching processes $\{ \mathbf{x}^K(t): t \geq 0 \}$, indexed by the type 1 carrying capacity $K$, with transition rates of the form
\begin{align*}
     \mathbf{x}^K & \to \mathbf{x}^K - (1, 0) \quad \quad \, \, \, \text{ at rate } \quad \varphi x_1^K \\
     \mathbf{x}^K & \to \mathbf{x}^K + (1, \ell - 1) \quad \text{ at rate } \quad g \bigg( \frac{x_1^K}{K} \bigg) p_\ell \bigg( \frac{x_1^K}{K} \bigg) x_2^K, \, \quad \ell \in \{0, \dots, \Xi \},
\end{align*}
where, conditional on the scaled immunity level $x_1^K/K$, we interpret $g( x_1^K/K )$ as the instantaneous death rate for each circulating parasite; and $p_\ell( x_1^K/K )$ as the probability that a parasite will generate an offspring batch of size $\ell$ (in addition to a single immunity increment) upon death. We additionally define the functions
\begin{align*}
    m(x) = \sum^\Xi_{\ell = 0} \ell p_\ell(x) \qquad v(x) =  \sum^\Xi_{\ell=0} \ell^2 p_\ell(x) - m(x)^2
\end{align*}
corresponding to the mean and variance respectively of the parasite offspring batch size as a function of the scaled immunity level. 

In accordance with Assumption \ref{assumption::limit_theorem_scaled_carrying_capacity}, we enforce $\text{sgn}(m(x) - 1) = \text{sgn}(1-x)$, whereby the process $\mathbf{x}^K$ is supercritical (subcritical) when $x_1^K<K$ ($x_1^K>K$). We further assume that $m, g: \mathbbm{R}_{\geq 0} \to \mathbbm{R}_{>0}$ are uniformly bounded and strictly positive functions that converge in the respective limits $x \to 0$ and $x \to \infty$ (whereby immunity is non-sterilising and has a saturating effect on parasite reproduction). This fulfils Assumption \ref{assumption::type_1_carrying_capacity}. Assumption \ref{assumption::type_1_offspring} is automatically satisfied, because parasite death is necessarily accompanied by the `birth' of an immunity increment, while the offspring batch size is almost surely bounded above by $\Xi$ in fulfilment of Assumption \ref{assumption::bounded_batch_size}. The irreducibility Assumption \ref{assumption::irreducibility}(ii) trivially holds since there is a single parasite type with $m(x) > 0$ for all $x$.

It is natural and biologically plausible to make the assumption that parasite offspring batch sizes decay monotonically (in the sense of first order stochastic dominance) as a function of immunity (Assumption \ref{assumption::monotonicity}), that is,
\begin{align*}
    \sum^\Xi_{\ell = c} p_\ell(x) \geq \sum^\Xi_{\ell = c} p_\ell(x') \text{ for all } c \in \{0, \dots, \Xi \} \text{ provided } x < x',
\end{align*}
which then implies that $m(x)$ is a monotonically decreasing function of $x$.

For ease of analysis, we impose several additional technical assumptions in line with Assumption \ref{assumption:density_dependent_process}. Specifically, we assume that the functions $p_\ell: \mathbbm{R}_{\geq 0} \to \mathbbm{R}$ are continuously differentiable,  with locally Lipschitz first derivative, and either strictly positive or identically zero for each $\ell \in \{0, \dots, \Xi \}$; it follows that $m, v: \mathbbm{R}_{\geq 0} \to \mathbbm{R}_{> 0}$ are also continuously differentiable. We further suppose that the function $g: \mathbbm{R}_{\geq 0} \to \mathbbm{R}_{> 0}$ is continuously differentiable with locally Lipschitz first derivative.

\subsection{Key results}

We now apply our results to characterise the behaviour of this system.

\subsubsection{Disease extinction}

Theorem \ref{theorem::extinction_almost_surely} guarantees that extinction occurs almost surely, excluding the biologically implausible prospect of unmitigated parasite replication, with the time to extinction possessing finite moments of all orders.

\subsubsection{Acute infection}

To model acute infection dynamics, we consider the setting where an immune-naive host (that is, $x_1^K(0) = 0$) is infected with a small parasite inoculum (that is, $x^K_2(0) = \mathcal{O}(1))$. We can argue heuristically that parasite dynamics (that is, the time evolution of the type 2 subpopulation) during acute infection can be approximated by a one-type supercritical linear Markovian branching process with fixed death rate $g(0)$ and fixed offspring density $p_\ell(0)$, $\ell \in \{0, \dots, \Xi \}$. We can formalise a coupling argument (following the approach of \parencite{barbour2010coupling}) by observing that the scaled immunity level upon the $r^\text{th}$ parasite reproduction event is at most $r/K$; by continuity of the immunity-dependent death rates $g(x)$ and offspring density $p_\ell(x)$, this then yields a bound of order $\mathcal{O}(r/K)$ for the transition rates of the original process $\mathbf{x}^K$ versus the approximating process upon the $r^\text{th}$ parasite reproduction event. We omit the precise argument here because supercritical linear branching processes have classically been used to model acute infection dynamics in the literature (see \parencite{capistran2018extracellular, bai2019effect}, for example).

Under our framework, acute infection may either be self-resolving (that is, die out before the immunity level approaches the carrying capacity) or progress to a secondary stage: by Theorem \ref{theorem::logarithmic_ascent}, there is a non-negligible probability with which the immunity level $x_1^K$ reaches the threshold $aK$, $0 < a < 1$, and the parasite burden $x_2^K$ attains a value of order $\Theta(K)$, within a logarithmic time scale $\mathcal{O}(\log(K))$.

\subsubsection{Scaling limits} \label{eq::parasite_model_scaling_limits}

Following an acute stage of infection, suppose that the parasite burden $x_2^K$ reaches a value of order $\Theta(K)$. Then on any finite time horizon thereafter, for sufficiently large $K$, we expect parasite counts to be well-approximated by the deterministic trajectories of the system of ODEs
\begin{align}
\begin{split}
    \frac{dX_1}{dt} &= -\varphi X_1 + g(X_1) X_2 \\
    \frac{dX_2}{dt} &= g(X_1) [ m(X_1) -1 ] X_2
\end{split} \label{eq::FLLN_parasite_model}
\end{align}
constituting the FLLN (Theorem \ref{theorem::FLLN}, based on \parencite{ethier2009markov}), albeit with a random initial condition (and a random time shift for ascent to this secondary stage of infection) \parencite{morris2024computation}. The limiting behaviour of the system (\ref{eq::FLLN_parasite_model}) is thus of interest with a view towards modelling chronic infection.

By Theorem \ref{theorem::FLLN_equilibrium}, the system (\ref{eq::FLLN_parasite_model}) possesses bounded trajectories, a saddle node at $\mathbf{0}$ and a unique non-zero equilibrium $\mathbf{Y}^* = (1, \varphi/g(1))$ in the non-negative orthant. Given $m'(1) < 0$ by assumption, it is straightforward to use the Jacobian,
\begin{align}
    \boldsymbol{\theta} = \begin{pmatrix}
        \varphi \big[ \frac{g'(1)}{g(1)} - 1 \big] &  g(1) \\  \varphi m'(1) & 0
    \end{pmatrix} \label{eq::theta_parasite_model}
\end{align}
with eigenvalues
\begin{align*}
    \lambda_\pm = \frac{\varphi}{2} \bigg( \frac{g'(1)}{g(1)} - 1 \bigg) \pm \frac{1}{2} \sqrt{\varphi^2 \bigg( \frac{g'(1)}{g(1)} - 1 \bigg)^2 + 4 \varphi g(1) m'(1)},
\end{align*}
to establish that $\mathbf{Y}^*$ is a sink if $g'(1)/g(1) < 1$; a centre if $g'(1)/g(1) = 1$ and a source if $g'(1)/g(1) > 1$. For any initial condition $\mathbf{X}_0 \in \mathbbm{R}_{\geq 0} \times \mathbbm{R}_{>0}$ with a non-zero parasite burden, we can thus apply Theorem \ref{theorem::planar_FLLN_limiting_behaviour} to conclude that the corresponding $\omega$-limit set $\omega(\mathbf{X}_0)$ is either $ \{ \mathbf{Y}^* \}$ or a periodic orbit enclosing $\mathbf{Y}^*$.

\subsubsection{Illustrative dynamics}

It may be natural to assume that the death rate $g(x)$ is a monotonically increasing function of the scaled immunity level $x$, to reflect an accelerated immunity-modulated rate of parasite clearance. However, our results suggest that this construction may give rise to self-sustained oscillations in the parasite burden. We now consider several special cases which illustrate the qualitative dynamics that may emerge under our construction.

\paragraph{A special case: $g(x)/x$ strictly decreasing} \label{eq::2D_parasite_quasiequilibrium}

If $g(x)/x$ is a strictly decreasing function of $x$ (which necessarily implies $g'(1)/g(1)<1$), we can use an identical argument to Corollary \ref{corollary::FLLN_limiting_behaviour_simple} to show that $\mathbf{Y}^*$ is an exponentially stable fixed point of (\ref{eq::FLLN_parasite_model}) with domain of attraction $\mathbbm{R}_{\geq 0} \times \mathbbm{R}_{>0}$. We thus recover quasi-equilibrium behaviour resembling that of the well-characterised single-type model with soft carrying capacity \parencite{jagers2011population, hamza2016establishment}.

Suppose an immune-naive individual (that is, $x_1^K(0) = 0$) is infected with a small parasite inoculum (that is, $x_2^K(0) = \mathcal{O}(1)$). Then we can apply the argument of Theorem \ref{theorem::logarithmic_ascent} and Lemma 3.1(i) of \textcite{prodhomme2023strong} to conclude that, with non-negligible probability, the scaled process $\mathbf{x}^K/K$ approaches the vicinity of the asymptotically stable fixed point $\mathbf{Y}^*$ within a ``transitory period'' of order $\mathcal{O}(\log(K))$, whereafter metastable behaviour is expected \parencite{barbour1976quasi, barbour2012total, prodhomme2023strong}. We interpret this as the establishment of chronic phase infection on a time scale of order $\mathcal{O}(\log(K))$, provided acute infection does not self-resolve due to demographic stochasticity in the early generations of parasite reproduction. Provided $g'(1)/g(1) \lesssim 1$, the fixed point $\mathbf{Y}^*$ is a stable spiral, whereby the the FLLN predicts dampened oscillations approaching $\mathbf{Y}^*$. In parameter regimes where the eigenvalue spectrum $\lambda_\pm$ for the Jacobian $\boldsymbol{\theta}$ (Equation (\ref{eq::theta_parasite_model})) is such that $\text{Re}(\lambda_\pm) \ll \text{Im}(\lambda_\pm)$, the work of \textcite{baxendale2011sustained} suggests that oscillations in the stochastic sample paths approaching $\mathbf{Y}^*$ may be well-approximated by ``a rotation modulated by an Ornstein-Uhlenbeck process''. 

To describe the dynamics of chronic infection, we use the CLT (Theorem \ref{theorem::central_limit}, based on \parencite{ethier2009markov}) to construct the quasi-equilibrium approximation $\mathbf{Y}^* + \mathbf{V}^*/\sqrt{K}$, where $\mathbf{V}^*$ is a stationary multivariate Ornstein-Uhlenbeck process with drift matrix $\boldsymbol{\theta}$ (Equation (\ref{eq::theta_parasite_model})) and local covariance matrix
\begin{align*}
    \boldsymbol{\Upsilon} = \varphi \begin{bmatrix}
    2 & 0 \\ 0 & v(1)
    \end{bmatrix}.
\end{align*}

Given that acute infection does not self-resolve and following the transitory period of $\mathcal{O}(\log(K))$, we use Corollary 1.3 of \textcite{prodhomme2023strong} to conclude that the scaled process $\mathbf{x}^K/K$ is well-approximated by the stationary Gaussian process $\mathbf{Y}^* + \mathbf{V}^*/\sqrt{K}$, with asymptotic precision $\alpha \log(K)/K \leq \Delta(K) \ll 1$ (for some $\alpha > 0$) on a time scale of order $\mathcal{O}(\beta K \Delta(K))$ (for some constant $\beta > 0$). We further apply Corollary 1.4 of \textcite{prodhomme2023strong} to conclude that the marginal distribution of $\sqrt{K}(\mathbf{x}^K/K - \mathbf{Y}^*)$, sampled at any time point in the chronic stage of infection, can be well-approximated for an exponential time scale $\mathcal{O}(\exp(cK))$ (for some constant $c>0$) by a multivariate normal distribution with mean $\mathbf{0}$ and covariance
\begin{align*}
    \mathbf{\Sigma} = \begin{bmatrix}
    \frac{g(1)}{g(1)-g'(1)} \big[ 1 - \frac{g(1) v(1)}{2 \varphi m'(1)} \big] & -\frac{v(1)}{2 m'(1)} \\
    -\frac{v(1)}{2 m'(1)} & \frac{g(1)}{g(1)-g'(1)} \big[ \frac{v(1)}{2} - \frac{\varphi m'(1)}{g(1)} - \frac{\varphi v(1) (g(1) - g'(1))^2}{g(1)^3 m'(1)} \big]
    \end{bmatrix}
\end{align*}
(in the sense that the Wasserstein distance, corresponding to the truncated Euclidean distance metric $d(\mathbf{y}, \mathbf{z}) = \lVert \mathbf{y} -\mathbf{z} \rVert_2 \wedge 1$, between these distributions approaches zero in the limit $K \to \infty$).

\paragraph{A special case: $g'(1)/g(1) > 1$} \label{sec::FLLN_periodic_2D_omega_limit}

When $g'(1)/g(1) > 1$, we expect the system of ODEs (\ref{eq::FLLN_parasite_model}) constituting the FLLN to exhibit periodic orbits: for any $\mathbf{X}_0 \in (\mathbbm{R}_{\geq 0} \times \mathbbm{R}_{>0}) \setminus \{ \mathbf{Y}^*\}$, the corresponding omega limit set $\omega(\mathbf{X}_0)$ is necessarily a periodic orbit enclosing the source $\mathbf{Y}^*$.

In this setting, likewise suppose that an immune naive-individual (that is, $x_1^K(0) = 0$) is infected with a small parasite inoculum (that is, $x_2^K(0) = \mathcal{O}(1)$). Then by Theorem \ref{theorem::logarithmic_ascent}, we may conclude that there is a non-negligible probability that acute infection does not self-resolve (due to demographic stochasticity in early parasite generations), and that a parasite burden of $\mathcal{O}(K)$ is attained within a period of $\mathcal{O}(\log(K))$. Thereafter, provided $K$ is sufficiently large, the FLLN of \parencite{ethier2009markov} (stated in Theorem \ref{theorem::FLLN}) yields that the time evolution of the scaled immunity level and parasite burden $\mathbf{x}^K/K$ is well-approximated by the deterministic trajectories of (\ref{eq::FLLN_parasite_model}), which are expected to eventually approach periodic orbits, on any finite time horizon.

Suppose that the deterministic system (\ref{eq::FLLN_parasite_model}) possesses an asymptotically stable limit cycle which satisfies the assumptions of Theorem \ref{theorem::limit_cycle_metastability} (based on \parencite{bressloff2020phase}). Then provided the scaled process $\mathbf{x}^K/K$ reaches the vicinity of this stable limit cycle, we expect the emergence of metastable behaviour: by Theorem \ref{theorem::limit_cycle_metastability} (based on \parencite{bressloff2020phase}), we expect the scaled stochastic sample paths $\mathbf{x}^K/K$ to remain in a neighbourhood of order $\mathcal{O}(\Delta(K))$ of this limit cycle for a sub-exponential time scale of order $\mathcal{O}(\beta^* \Delta^2(K))$ for some constant $\beta^*>0$ (although the phase for $\mathbf{x}^K/K$ compared to the deterministic trajectories of (\ref{eq::2D_limit_cycle_system}) may diverge on a much earlier time scale \parencite{bressloff2020phase}). Under such a regime, we would thus expect to see self-sustained oscillations in parasitemia over an extended period of chronic infection.

Antigenic variation, or systematic switching between parasite surface proteins with varying degrees of cross-immune memory, has been broadly recognised to generate self-sustained oscillations in parasitemia \parencite{mitchell2012synchronous}; here, we characterise an alternative oscillatory mechanism predicated on appropriate immune modulation of parasite clearance rates. Echoing the theory of biochemical oscillators, as summarised by \parencite{novak2008design}, we suggest that the interplay between a negative feedback loop (with immunity suppressing parasite reproduction) and a positive feedback loop (with parasite reproduction driving the acquisition of immunity) yields a ``dynamical hysteresis'' provided the system is ``sufficiently nonlinear''.

\paragraph{A special case: supercritical and subcritical Hopf bifurcation} \label{sec::FLLN_Hopf_2D}

We can construct natural parametric families for which limit cycles emerge through subcritical or supercritical Hopf bifurcation (Example \ref{example::2D_limit_cycle}). A numerical example of a system which can be verified to undergo a subcritical Hopf bifurcation is shown in Figure \ref{fig:2D_limit_cycle}. This example, albeit contrived, illustrates that the classical form of quasi-equilibrium behaviour characterised in Section \ref{eq::2D_parasite_quasiequilibrium} need not arise even in the case $d=2$ where the unique equilibrium point $\mathbf{Y}^*$ in the non-negative orthant is a sink, because the domain of attraction of $\mathbf{Y}^*$ does not necessarily include a neighbourhood of the origin (by Theorem \ref{theorem::logarithmic_ascent}, the scaled process $\mathbf{x}^K/K$ will reach a neighbourhood of order $\mathcal{O}(K)$ with non-negligible probability in $\mathcal{O}(\log(K))$ time, whereafter the FLLN of \parencite{ethier2009markov} yields that the scaled sample paths $\mathbf{x}^K/K$ of (\ref{eq::2D_limit_cycle_system}) on any finite time horizon for sufficiently large $K$). The functional form of the immunity-modulated parasite clearance rate $g(\cdot)$ is thus a key determinant of the dynamics that can emerge under this framework.

\section{Conclusion} \label{sec::conclusion}

In order to model the long term persistence of low-level parasitemia, we proposed a $d$-type Markovian branching process with one-type size dependence, where the controlling type (type 1) represents a discretised immunity level which governs the reproductive parameters of the parasites types $2, \dots, d$. Here, we address a slightly more general framework predicated on two principal assumptions: a soft carrying capacity $K$ imposed on the controlling type, such that reproduction is subcritical (supercritical) when the type 1 subpopulation size exceeds (falls below) $K$, with reproductive parameters converging in the infinite type 1 subpopulation size limit (Assumption \ref{assumption::type_1_carrying_capacity}); and, with probability at least $\varepsilon > 0$, the birth of at least one type 1 individual coinciding with the death of an individual of types $2, \dots, d$ (Assumption \ref{assumption::type_1_offspring}). In the special case where type 1 individuals generate no further offspring, we may interpret the type 1 subpopulation size as a measure of cumulative parasite exposure on an biologically-informed time scale and thus as an abstracted model of acquired exposure-dependent immunity. 

Two characteristics of this framework warrant tailored analysis: supercriticality on an unbounded domain $\mathbbm{Z}_{<K} \times (\mathbbm{Z}_{\geq 0})^{d-1}$ (ruling out direct generalisations of a number of results in the literature pertaining to single-type size-dependent branching process \parencite{jagers2011population, jagers2020populations} and Foster-Lyapunov conditions for the existence of quasistationary distributions \parencite{meyn1993stability, champagnat2023general}); and convergence of reproductive parameters in the infinite type 1 subpopulation size limit, rather than the infinite total population size limit (whereby results on nearly (super/sub)-critical multitype branching processes
\parencite{klebaner1989linear, klebaner1994asymptotic, gonzalez2005unlimited} are not directly applicable to our setting).

We prove that extinction occurs almost surely, with finite moments of all orders for the time to extinction, by analysing the upcrossings and downcrossings of the type 1 subpopulation size over a sufficiently large threshold $y > K$ (Theorem \ref{theorem::extinction_almost_surely}); our proof draws on ideas from \parencite{ferrari1995existence, gonzalez2005unlimited, anderson2012continuous}. We conjecture the existence of an exponential moment for the time to extinction, which is necessary for the existence of a quasistationary distribution \parencite{collet2012quasi}, but would also be sufficient in our case under the additional assumption of irreducibility on non-zero states (Remark \ref{remark::exponential_moment}, drawing on \parencite{ferrari1995existence}).
    
We then formulate a sequence of density-dependent branching processes indexed by the type 1 carrying capacity $K$ (Assumption \ref{assumption:density_dependent_process}), and analyse scaling limits using the canonical FLLN and CLT of \parencite{ethier2009markov} (Section \ref{sec::limit_theorems}), with a view towards characterising long-term persistence. Under an irreducibility condition (Assumption \ref{assumption::irreducibility}), we prove that the FLLN possesses a unique endemic equilibrium (Theorem \ref{theorem::FLLN_equilibrium}); in the planar case $d=2$, we recover a characterisation of the limiting behaviour of the FLLN based on the clasification of this fixed point (Theorem \ref{theorem::planar_FLLN_limiting_behaviour}). We show that periodic orbits may emerge even in the case $d=2$ when the unique endemic equilibrium is a sink (Example \ref{example::2D_limit_cycle}), or in the case $d=3$ with fixed death rates and offspring batch sizes that decay monotonically as a function of immunity (Example \ref{example::3D_limit_cycle}); this points to fundamentally different behaviour to single-type models with soft carrying capacity.

We consider two forms of metastable behaviour that may emerge under this framework, under the simplifying assumption that offspring batch sizes have uniformly bounded support (Assumption \ref{assumption::bounded_batch_size}). Provided the endemic equilibrium of the FLLN is a sink, and the scaled branching process hits its domain of attraction, we use the work of \parencite{prodhomme2023strong} to characterise the subexponential time scales for which the FLLN and CLT remain asymptotically valid (Section \ref{sec::metastability_sink}). Conversely, provided the FLLN possesses a stable limit cycle, and the scaled branching process approaches its vicinity, we use the work of \parencite{bressloff2020phase} to characterise the subexponential time scales for which the process persists in a neighborhood of the cycle, noting that the phase of the scaled process and the FLLN may diverge on much earlier time scales (Section \ref{sec::metastability_limit_cycle}).

Under the additional assumption that offspring batch sizes decay monotonically (in the sense of strong first order stochastic dominance \parencite{kopa2018strong}) as a function of the type 1 subpopulation size, we recover a logarithmic time scale of ascent to the type 1 carrying capacity, mirroring the single-type setting \parencite{jagers2011population}: provided the branching process is initialised with at least one individual of types $2, \dots, d$, we show that a type 1 subpopulation threshold $aK$, $0 < a < 1$ is attained with non-negligible within a logarithmic time scale (Theorem \ref{theorem::logarithmic_ascent}).

In the special case of a two-type within host model of immunity-modulated parasitic disease, we apply these results to show that the functional form of the immunity-dependent parasite clearance rate is a key determinant of qualitative dynamics, characterising the emergence of both classical quasi-equilibrium behaviour (in direct parallel to single-type models with soft carrying capacity \parencite{jagers2011population}) and possible self-sustained oscillations in different parameter regimes (Section \ref{sec::example_parasite_model}).

\section*{Declaration on the use of generative AI}

ChatGPT-5 (Oxford Edu licence, OpenAI, https://chatgpt.com/) was used to identify relevant mathematical literature and terminology, based on natural language queries or brief mathematical descriptions of sub-problems encountered over the course of this project; all relevant literature has been cited directly. ChatGPT-5 was also used to aid construction of the example in Figure \ref{fig:2D_limit_cycle}, based on a set of derived conditions (specifically, to propose a family of increasing sigmoidal functions with $g_1'''(1) - 3 g''(1) > 0$ and $g_\alpha'(1)/g_\alpha(1) =1$); and to generate Mathematica code for interactive plots that were toggled to formulate the example in Figure \ref{fig:limit_cycle_3D}.

\section*{Acknowledgements}
SM gratefully acknowledges a Keble College Sloane Robinson/Clarendon Scholarship and a Nuffield Department of Clinical Medicine Studentship from the University of Oxford.

This research was funded in part by the Wellcome Trust [315982/Z/24/Z]. For the purpose of Open Access, the author has applied a CC BY public copyright licence to any Author Accepted Manuscript version arising from this submission.

\printbibliography

\clearpage

\appendix

\counterwithin{figure}{section}

\section{Numerical examples}

\paragraph*{Subcritical and supercritical Hopf bifurcation in the case $d=2$ for a model of immunity-modulated parasitic disease}

\begin{example} \label{example::2D_limit_cycle}

Let $g_\alpha: \mathbbm{R}_{\geq 0} \to \mathbbm{R}_{>0}$ be a uniformly bounded, strictly positive and smooth family of functions such that $g_\alpha(1) = \gamma$ and $g'_\alpha(1) / g_\alpha(1) = \alpha$. Let $m:\mathbbm{R}_{\geq 0} \to \mathbbm{R}_{>0}$ be a smooth, uniformly bounded, monotonically decreasing function with $m(1)=1$. To avoid onerous calculation, we enforce $m'(1) = -\gamma/\varphi$. Consider a family of systems parametrised by $\alpha$,
\begin{align}
    \frac{d \mathbf{X}}{dt} = \begin{pmatrix}
        -\varphi & g_\alpha(X_1) \\ 0 & g_\alpha(X_1) \big[ m(X_1) - 1 \big]
    \end{pmatrix} \mathbf{X} =: A_\alpha(X_1) \mathbf{X}. \label{eq::2D_limit_cycle_system}
\end{align}

For all $\alpha$, we observe that $\mathbf{Y}^* = (1, \varphi/\gamma)$ is the unique non-zero fixed point of (\ref{eq::2D_limit_cycle_system}) in the non-negative orthant. We also note that the eigenvalues of the Jacobian $\boldsymbol{\theta}_\alpha$ evaluated at $\mathbf{Y}^*$ cross the imaginary axis transversely at $\alpha=1$, such that $\mathbf{Y}^*$ is a sink (source) provided $\alpha < 1$ ($\alpha > 1$), compatible with a Hopf bifurcation at $\alpha = 1$. 

We can reformulate this family of systems in the Poincare-Andronov-Hopf normal form (Equation 20.2.13 of \parencite{wiggins2003introduction}) by applying the transformation 
\begin{align*}
    Z_1 \mapsto X_1 - 1 \qquad Z_2 \mapsto X_2 - \frac{\varphi}{\gamma},
\end{align*}
whereby we can readily calculate the first Lyapunov exponent (Equation 20.2.14 of \parencite{wiggins2003introduction})
\begin{align}
    s := \frac{\varphi}{16 \gamma^2}   \Big( g_1''(1) \big[ \varphi m''(1) - 3 \gamma \big] + \gamma g_1'''(1) \Big). \label{eq::lyapunov_coefficient}
\end{align}

Applying Theorem 20.2.3 of \parencite{wiggins2003introduction} yields two cases:
\begin{itemize}
    \item If $s < 0$, then there is a supercritical Hopf bifurcation at $\alpha=1$ whereby there exists a constant $c_1 > 1$ such that, for $1 < \alpha < c_1$, an asymptotically stable periodic orbit appears around the unstable fixed point $\mathbf{Y}^*$. 
    \item If $s > 0$, then there is a subcritical Hopf bifurcation at $\alpha=1$ whereby there exists a constant $c_2 < 1$ such that, for $c_2 < \alpha < 1$, an unstable periodic orbit appears around the asymptotically stable fixed point $\mathbf{Y}^*$, which then necessarily possesses a bounded domain of attraction; provided $\mathbf{X}_0 \in \mathbbm{R}_{\geq 0} \times \mathbbm{R}_{>0}$ is sufficiently small (but non-zero), we would then expect the corresponding $\omega$-limit set $\omega(\mathbf{X}_0)$ to be a periodic orbit enclosing the sink $\mathbf{Y}^*$.
\end{itemize}

A numerical example of a system which undergoes a subcritical Hopf bifurcation at $\alpha = 1$ is shown in Figure \ref{fig:2D_limit_cycle}.

\begin{figure}[h!]
    \centering
    \includegraphics[width=0.85\linewidth]{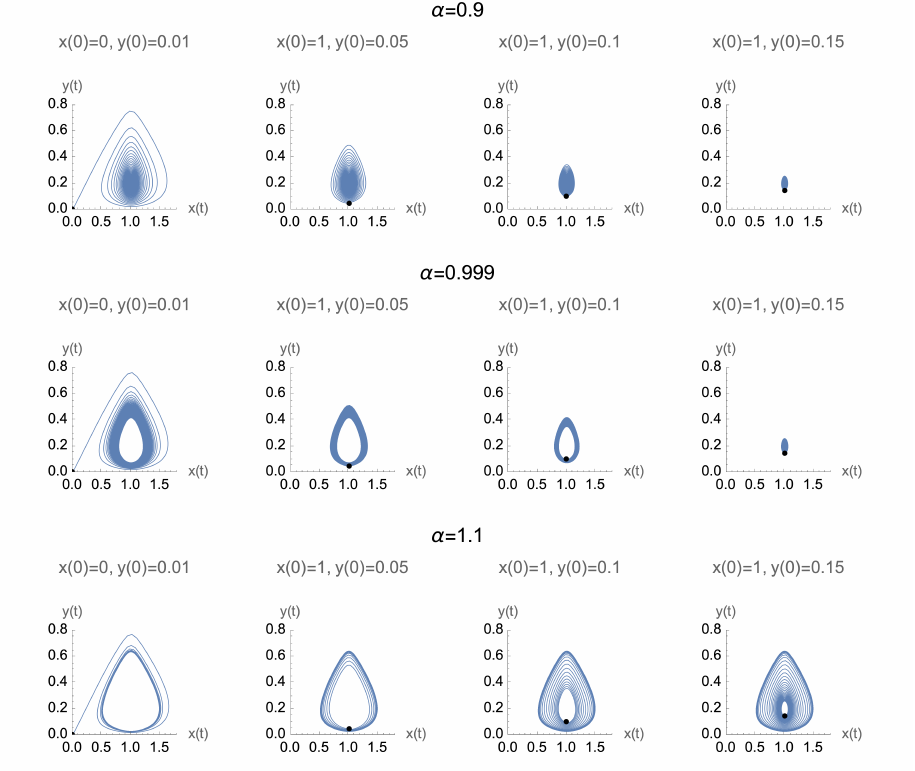}
    \caption{Trajectories of the system $\frac{d \mathbf{X}}{dt} = A_\alpha(X_1) \mathbf{X}$ (Equation (\ref{eq::2D_limit_cycle_system})) for $\alpha \in \{ 0.9, 0.999, 1.1 \}$ with\\
    \(\begin{aligned}
        \qquad \gamma = 1 \qquad \varphi = \frac{1}{5} \qquad g_\alpha(x) = \gamma \bigg( \frac{1}{2} + \frac{1}{1 + e^{-4 \alpha (x-1) + 6 \alpha^3 (x-1)^3}} \bigg) \qquad
    m(x)= \frac{1}{2} + \frac{1}{1 + e^{20 (x-1)}},
    \end{aligned}\)\\
    shown until $t=60000$, with initial conditions $\mathbf{X}(0) \in \{ (0, 0.01), (1, 0.05), (1, 0.1), (1, 0.15) \}$ (indicated with closed circles); numerically solved and plotted using Mathematica (using \texttt{NDSolve} and \texttt{ParametricPlot}). Here, we can explicitly compute the Lyapunov coefficient $s = \frac{1}{80}$ (Equation (\ref{eq::lyapunov_coefficient})) to conclude that this family of systems exhibits a subcritical Hopf bifurcation at $\alpha = 1$ around the unique non-zero fixed point $\mathbf{Y}^* = (1, \frac{1}{5})$ in the non-negative orthant.}
\label{fig:2D_limit_cycle}
\end{figure}

\end{example}

\clearpage

\paragraph*{Limit cycle in the case $d=3$, with fixed individual death rates and monotonicity in mean offspring batch sizes} \label{appendix::3D_limit_cycle}

\begin{example} \label{example::3D_limit_cycle}

Consider the family of systems
\begin{align}
    \frac{d \mathbf{X}}{dt} = \begin{pmatrix}
        -\gamma_1 & \gamma_2 & \gamma_3 \\
        0 & -\gamma_2 & \gamma_3 \mu \big[ \frac{1}{2} + \frac{1}{1 + e^{-4 \beta/\mu(X_1-1)}} \big] \\
        0 & \gamma_2 \frac{1}{\mu} \big[ \frac{1}{2} + \frac{1}{1 + e^{-4 \alpha \mu (X_1-1)}} \big]  & - \gamma_3
    \end{pmatrix} \mathbf{X} =: A_{\mu, \beta, \alpha}(X_1) \mathbf{X} \label{eq::3D_limit_cycle_example}
\end{align} 
parametrised by
\begin{align*}
    \mu = m_{3,2}(1) = \frac{1}{m_{2,3}(1)} > 0 \qquad \alpha = m_{2,3}'(1) <0  \qquad \beta = m_{3,2}'(1) <0.
\end{align*}

It is straightforward to verify that $A_{\mu, \beta, \alpha}(x)$ fulfils Assumptions \ref{assumption::limit_theorem_scaled_carrying_capacity}, \ref{assumption::type_1_offspring} and \ref{assumption::irreducibility}(ii). By Theorem \ref{theorem::FLLN_equilibrium}, the system $\frac{d \mathbf{X}}{dt} = A_{\mu, \beta, \alpha}(X_1) \mathbf{X}$ possesses a unique non-zero equilibrium with Jacobian
\begin{align*}
    \boldsymbol{\theta}_{\mu, \beta, \alpha} = \begin{pmatrix}
    -\gamma_1 & \gamma_2 & \gamma_3 \\
    \frac{\beta \gamma_1}{1 + \mu} & -\gamma_2 & \gamma_3 \mu \\
     \frac{\alpha \gamma_1 \mu}{1 + \mu} & \frac{\gamma_2}{\mu} & -\gamma_3
    \end{pmatrix}.
\end{align*}

We can verify numerically that $\boldsymbol{\theta}_{\mu, \beta, \alpha}$ is not necessarily Hurwitz stable for all $\mu > 0$, $\alpha, \beta < 0$; by Theorem 5.1 of \parencite{vassena2025mass}, this implies the existence of periodic orbits under one or more parameter regimes. A numerical example of a system which possesses an asymptotically unstable non-zero equilibrium and appears to exhibit a stable limit cycle is shown in Figure \ref{fig:limit_cycle_3D}.

\begin{figure}[h!]
    \centering
    \includegraphics[width=0.7\linewidth]{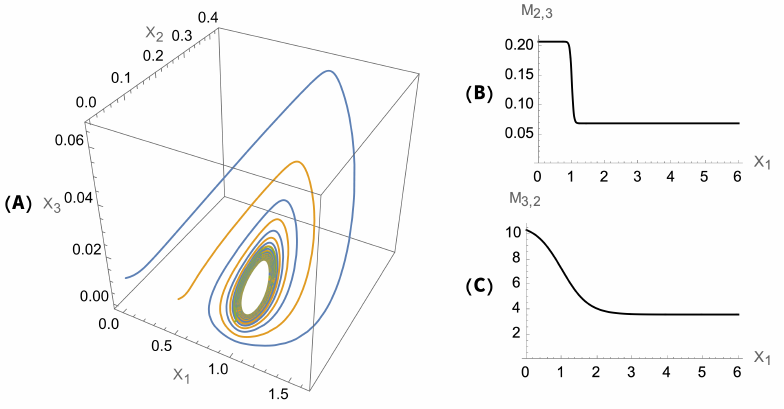}
    \caption{(A) Trajectories of the system $\frac{d \mathbf{X}}{dt} = A_{\mu, \beta, \alpha}(X_1) \mathbf{X}$ (Equation (\ref{eq::3D_limit_cycle_example})) in the case $\gamma_1 = 0.04$, $\gamma_2=\gamma_3=0.4$, $\mu=7.2$, $\alpha=-1.2, \beta = -4.8$ (whereby $\boldsymbol{\theta}_{\mu, \beta, \alpha}$ has approximate eigenvalues $-0.85354$, $0.00677 \pm 0.26408 i$), shown until $t=6000$ with initial conditions $X_2(0) = X_3(0) = 0.01$ and $X_1(0) = 0$ (blue), $X_1(0) = 0.5$ (orange) and $X_1(0) = 1$ (green); numerically solved and plotted using Mathematica (using \texttt{NDSolve} and \texttt{ParametricPlot3D}). (B,C) Mean offspring batch sizes $m_{2,3}(X_1)$ and $m_{3,2}(X_1)$.} 
    \label{fig:limit_cycle_3D}
\end{figure}
\end{example}

\end{document}